\newtheorem{theorem}{Theorem}[section]
\newtheorem{proposition}[theorem]{Proposition}
\newtheorem{corollary}[theorem]{Corollary}
\newtheorem{lemma}[theorem]{Lemma}
\newtheorem{definition}[theorem]{Definition}
\newtheorem{example}[theorem]{Example}
\newtheorem{remark}[theorem]{Remark}
\newtheorem*{theorem1*}{Theorem I}
\newtheorem*{theorem2*}{Theorem II}
\newtheorem*{corollary1*}{Corollary I}
\DeclareMathAlphabet{\pazocal}{OMS}{zplm}{m}{n}
\newcommand{\R}{\mathbb{R}}
\newcommand{\n}{\mathbb{N}}
\newcommand{\LL}{\mathbb{L}}
\newcommand{\Ric}{\operatorname{Ric}}
\newcommand{\Sec}{\operatorname{Sec}}
\newcommand{\cut}{\operatorname{cut}}
\newcommand{\mt}{\overline{M}}
\newcommand{\gm}{g_M}
\newcommand{\gt}{\overline g}
\newcommand{\gte}{\overline g_\epsilon}
\newcommand{\gE}{\gt_E}
\newcommand{\nablam}{\nabla^M}
\newcommand{\nablat}{\overline \nabla}
\newcommand{\di}{\hbox{div}}
\newcommand{\divm}{\di^M}
\newcommand{\divt}{\overline{\di}}
\newcommand{\rt}{\overline{R}}
\newcommand{\ricct}{\overline{\operatorname{Ric}}}
\newcommand{\fb}{\mathbf{F}}
\begin{document}

\title[Complete translating solitons in Lorentzian products]{Complete translating solitons \\ in Lorentzian products}

\author[L. Ferrer]{Leonor Ferrer}
\address[Ferrer]{
  Departamento de Geometr\'\i{}a y Topolog\'\i{}a,
  Universidad de Granada,
  18071 Granada, Spain.
}
\email[Ferrer]{lferrer@ugr.es}
\author[F. Mart\'in]{Francisco Mart\'\i{}n}
\address[Mart\'in]{
  Departamento de Geometr\'\i{}a y Topolog\'\i{}a,
  Universidad de Granada,
  18071 Granada, Spain.
}
\email[Martin]{fmartin@ugr.es}

\author[M. S\'anchez]{Miguel S\'anchez}
\address[S\'anchez]{
  Departamento de Geometr\'\i{}a y Topolog\'\i{}a,
  Universidad de Granada,
  18071 Granada, Spain.
}
\email[S\'anchez]{sanchezm@ugr.es}

\thanks{The authors acknowledge Professors Márcio Battista (U. Alagoas) and  Marco Rigoli (U.~Milano) their explanations on some references.  Supported by the
framework of IMAG-María de Maeztu grant CEX2020-001105-M funded by MCIN/AEI/
10.13039/50110001103. F. Mart\'in  and M. S\'anchez are also partially supported by the
project PID2020-116126GB-I00 funded by MCIN/ AEI /10.13039/501100011033. This material is based upon work supported by the National Science Foundation Grant No. DMS-1928930, while F. Martín was in residence at the Simons Laufer Mathematical Sciences Institute (formerly MSRI) in Berkeley, CA, during the Fall 2024 semester.}
\date{\today}

\begin{abstract} 
Obstructions to the existence of spacelike solitons depending on the growth of the mean curvature $H$ are proved for Lorentzian products $(M\times \R, \bar g=g_M-dt^2)$ with lowerly bounded curvature. The role of these bounds for both the completeness of the soliton $\Sigma$ and the applicability of the Omori-Yau principle for the drift Laplacian, is underlined. The differences between bounds in terms of  the intrinsic distance $g$ of the soliton and the distance $g_M$ in the ambiance are analyzed, and lead to a revision of classic results on completeness for spacelike submanifolds. 
In particular, {\em primary bounds}, including affine $g$-bounds and logarithmic $g_M$-bounds,  become enough to ensure both completeness and Omori-Yau's. Therefore, they become an obstruction to the existence of solitons when the ambiance Ricci is non-negative. 

These results, illustrated with a detailed example,   deepen the  uniqueness of solutions of elliptic equations which are not uniformly elliptic, providing insights into the interplay between mean curvature growth and the global properties of this geometric flow.
\end{abstract}

\maketitle

\section{Introduction}
Translating solitons of the mean curvature flow represent fascinating geometric structures that play a crucial role in understanding the evolution of hypersurfaces in Riemannian and Lorentzian manifolds. When considering Lorentzian manifolds that are products of a Riemannian manifold and the real line, the study of translating solitons takes on an added layer of complexity and significance. These solitons serve as stationary solutions to the mean curvature flow equation, capturing the intricate balance between curvature and translation. 
 \begin{figure}[htbp]
\begin{center}
\includegraphics[width=.5\textwidth]{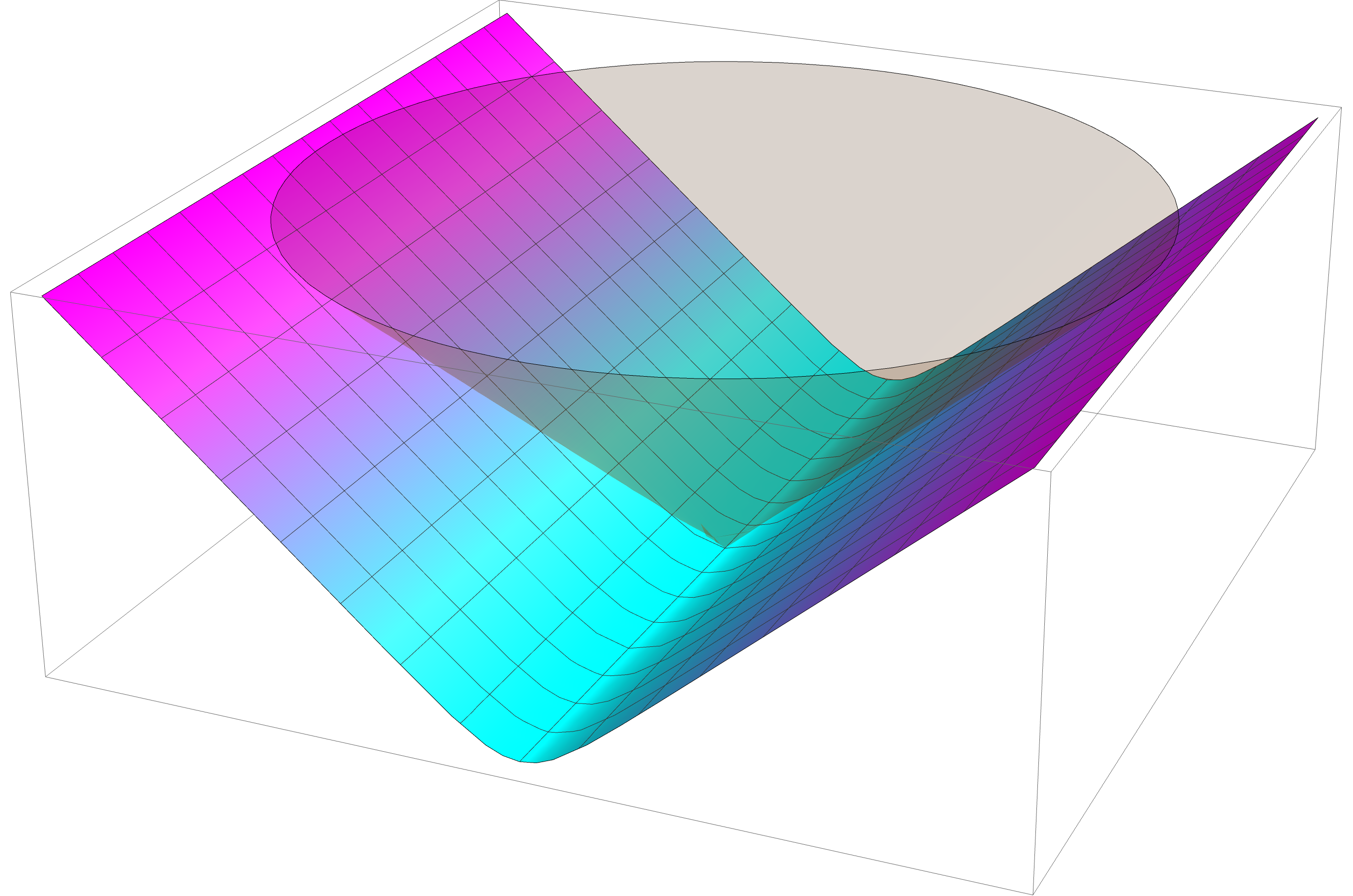}\includegraphics[width=.5\textwidth]{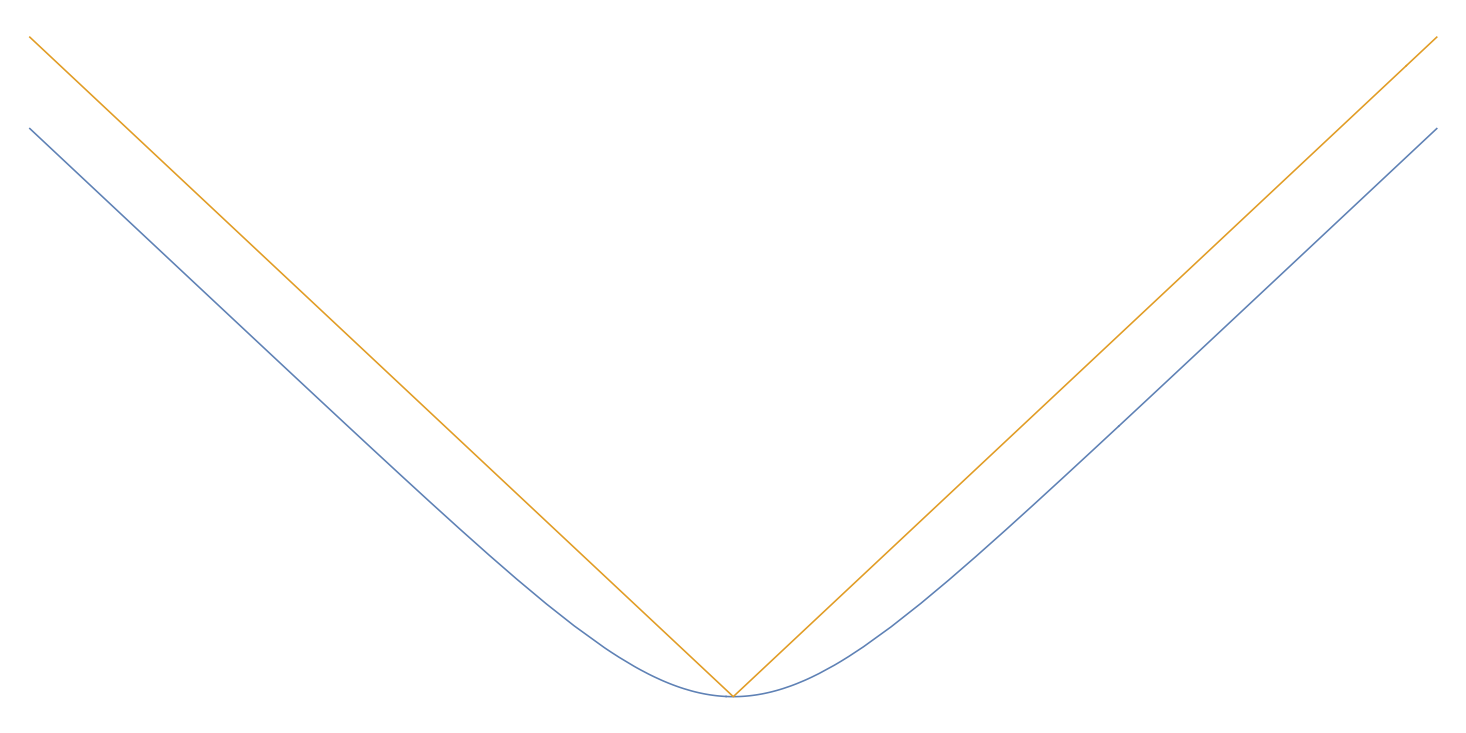}
\caption{\small Left: The grim reaper cylinder in $\LL^3$. Right: Its directrix is a spacelike curve which approaches so fast to a lightlike line that its length remains finite.  }
\label{fig:non}
\end{center}
\end{figure}
 Translating solitons
have been extensively studied. In the case of Riemannian products, background references are the thorough
recent work by Lira and Martin \cite{LM} as well as the specific case of graphical translators in the
Euclidean setting $\R^{n+1}$ by Hoffman et al. \cite{HIMW-1, HIMW-2}. In the Lorentzian setting, it
is worth pointing out the recent classification of rotational invariant translators in Lorentz-Minkowski spacetime $\LL^{n+1}$ carried out by Lawn and Ortega \cite{LO}.

A subtle issue in the context of  spacelike submanifolds in Lorentzian manifolds is  its
possible metric incompleteness. In the case of 
embedded  Riemannian submanifolds, if the image is closed as a subset then   completeness
is ensured by the completeness of the ambient manifold. However, the case of embeddings in Lorentzian manifolds becomes more delicate. For example, consider the
Riemannian Grim Reaper cylinder $x_{n+1}=\ln  (\cos x_1).$
This hypersurface is a translating soliton and it is trivially complete. 
On the contrary, the analogous Lorentzian  Grim Reaper cylinder, which is the entire graph given by $x_{n+1}=\ln  \cosh x_1$, becomes incomplete, as it is easy to check that its directrix  
$$\alpha(s) =
(s, 0, \ln (\cosh(s))
$$
is a divergent curve with finite length (see Figure \ref{fig:non}). 
This type of subtlety is not new in submanifold theory. Indeed,  an outstanding precedent is the celebrated result by Cheng and Yau \cite{CY}: {\em any  spacelike   hypersurface of constant mean curvature   $H$  closed as a subset in $\LL^{n+1}$ (then, necessarily, a graph) is complete}. In the case $H=0$, this led to solve the Bernstein problem, i.e.,  spacelike hyperplanes in $\LL^{n+1}$ are the unique {\em entire} solutions of the maximal graph equation; for $H \neq 0$, Treibers \cite{Tr} constructed counterexamples. 
Harris \cite{Ha} and Beem and Ehrlich \cite{BE} deepened in results on completeness. The former prove that, in the case of a spacelike hypersurface (in the ambience   of $b$-bounded spacetimes),  an absolute bound of its principal curvatures implies completeness. The latter proved that, for spacelike submanifolds on $\LL^{n+1}$, a subaffine growth of the Euclidean norm of the unit timelike normal vectors (with respect to the usual Euclidean metric of $\R^{n+1}$) is enough to ensure completeness.   
Notice, however, that such an Euclidean bound is not directly translatable to the (intrinsic or extrinsic) geometry  of the submanifold. Indeed, Beem and Ehrlich explain that their result cannot recover previous Harris'  for spacelike  hypersurfaces of $\LL^{n+1}$, in spite of its restrictive  absolute bound, and the subaffine growth is attained only under the additional hypothesis of being umbilic \cite[Corollary 3.7]{BE} (which can be also proved by more direct ways).

In the case of spacelike solitons,  Cheng and Qiu \cite[Theorem 3]{CQ} proved that there exist no complete $m$-dimensional spacelike translating solitons in $\R^{m+p}_p$.  Ding \cite[Theorem 1.1]{Di} exhibited (necessarily incomplete) spacelike solitons in $\LL^{n+1}$, under a general class of  behaviour of $H$ at infinity; he also noticed  that $H$ had to be unbounded. 
The case $H$ bounded was deeply studied    by Batista and Lima \cite{BL}, in the more general case of Lorentzian products $M\times \R$. Here, the boundedness of $H$ implies both the completeness of properly immersed solitons and the uniformly elliptic character of the soliton equation (see \eqref{e_soliton} below), turning out into  a {\em non-existence} result.  

In the present article, we work beyond this last result relaxing the hypotheses of boundedness for $H$. This requires a careful study of two issues, namely,  geodesic completeness (see  Section \ref{s3}) and   the Omori-Yau principle (Section \ref{s4}).

About the former, we start revisiting  the mentioned Beem-Ehrlich's study  \cite{BE} of completeness for spacelike submanifolds $\Sigma^m$. Their result for $\LL^{n+1}$ is extended to the product case $\overline{M}= M\times \R, \gt=g_M-dt^2$ and the  hypotheses are    optimized, essentially replacing the subaffine growth for   directions normal to $\Sigma^m$  by a radial 
{\em primary bound} in terms of a positive  function $G$ with $\int 1/G=\infty$ (see Theorem~\ref{t_completeness}, Remark~\ref{r_BE}).  All these bounds are expressed in terms of the extrinsic ambient Riemannian metric $\gE= g_M+dt
^2$. 
Nevertheless, in  the case of graphs,  we also give a result on completeness expressed in terms exclusively of  elements of $M$, so that it can be checked  in terms of the {\em height  function $h$} defining the graph. This is achieved in  Corollary \ref{c_complgraph}, which can be rewritten as follows.

\begin{theorem1*} Let $\left(M, g_M\right)$ be a complete Riemannian manifold and $r_M(\cdot )=$ dist$_M(o_M,\cdot )$ be the distance function  from a fixed point $o_M\in M$. 

Consider the Lorentzian  product  $\mt=M \times \mathbb{R}, \gt=g_M-d t^2$ and let
  $\Sigma$ be a spacelike hypersurface in $\mt$ obtained as the the graph of a function $u$ on $M$.   
  Let $\nu$ be the (future-pointing) unit normal to $\Sigma$ and $\nu_M$ its orthogonal projection into  $TM$.

  $\Sigma$  is complete with the  Riemannian metric inherited from $\gt$ if  
    \begin{equation}\label{e_compl_M_0}
|\nu_M|(x)\leq G(r_M(x)) \qquad   \forall x\in M,        
    \end{equation}
for a function
 $G \in C^0([0,+\infty[)$ satisfying: 
 
 (a)  $G>0$, (b) $\int_0^\infty\frac{dt}{G(t)}=\infty$ and (c) $G$ is nondecreasing.
\end{theorem1*}
It is worth pointing out:
\begin{enumerate}[1)]
    \item 
The restriction to graphs is imposed only to state the bounds in terms of the metric $g_M$. It will be harmless, because  proper spacelike immersed hypersurfaces must cover $M$ 
 and, thus, their universal coverings must be  graphs (see Proposition \ref{p_cover}). 
 
 \item \label{item2} Anyway, Theorem I can be extended to  any properly immersed spacelike hypersurface by checking \eqref{e_compl_M_0} with any of the following two metrics on $\Sigma$ (instead  of $g_M$):
 
 (A) the  metric $g_E$ induced from Riemannian $\bar g_E=g_M+dt^2$; in this case, it is not necessary to impose (c) on $G$; (see aforementioned Theorem \ref{t_completeness}), or
 
 (B)  the metric  $g$ induced from Lorentzian $\bar g$; in this case, it is not necessary to impose  (b) nor (c) on $G$ (see Remark \ref{r_caca}). 

\item The bounds for $G$ improve affine functions (see Remark \ref{r_BE}) and some applications to  soliton hypersurfaces in $\mt$ are summarized in Corollary~\ref{co:completeness}.
\end{enumerate}

  These results are expressed as a primary bound in terms of  $G$. The  condition~(c) is required only when using the metric $g_M$ and it  is slightly restrictive  (indeed, it is  implicitly assumed in Beem-Ehlich's subaffine growth, see Remark \ref{r_BE} and Example~\ref{ex_compl}). The three conditions  (a), (b)  and (c)  match formally  with the standard hypotheses to ensure the Omori-Yau maximum principle for manifolds with lowerly bounded sectional curvature say, as written in the comprehensive monograph \cite[Th. 2.5, p. 89]{AMR} (see   
  Remark \ref{r_trivial} below). However, this principle will require to compute a bound as \eqref{e_compl_M_0} with the distance $r$  for the intrinsic metric $g$ of $\Sigma$, which becomes more restrictive than the bound above for completeness. 

The Omori-Yau principle is a significant tool in differential geometry and analysis, particularly when dealing with complete Riemannian manifolds. It allows us to find an {\em Omori-Yau sequence} of points where a bounded function $u$ attains its almost maximum value (including a condition on non-positiveness for the Laplacian $\Delta$), even if a genuine maximum does not exist. The principle has been widely used to prove various geometric and analytical results, especially in the context of differential equations on manifolds.
Our  analytical issue, studied in Section \ref{s4}, will be to ensure  the Omori-Yau principle for the drift Laplacian of the height function on a $c$-translator $\Sigma$
in the non-uniformly elliptic case and with no assumption on completeness of $\Sigma$. Specifically, we obtain 
the following theorem which 
improves \cite[Theorem 3.2]{BL} by relaxing the bound of $H$  into a  primary bound in terms of $r$.

\begin{theorem2*}   Let $\left(M, g_M\right)$ be a complete Riemannian manifold  with sectional curvature 
bounded Sec$_M$ from below and consider the product manifold $\mt=M \times \R$ equipped with the metric $\gt= g_M-dt^2$. Let $F: \Sigma \to \mt$ be a  spacelike $c$-translating soliton of the mean curvature flow (not necessarily graphical) {and let $r(\cdot )=$ dist$_\Sigma(o,\cdot )$ be the distance function in $\Sigma$ from a fixed point $o\in \Sigma$}. 

The Omori-Yau principle holds for the drift Laplacian $\Delta_{c \, h}$ on $(\Sigma,g)$ (where $g=F^\ast (\gt)$ and $h$ is the height function in Definition \ref{def:h}) if 
the mean curvature satisfies
\begin{equation}  \label{eq:desH-intro} H(x) \leq G(r(x)), \quad   \forall x \in \Sigma, \,   \end{equation}
for a function
 $G \in C^0([0,+\infty[)$ satisfying: 
 
 (a)  $G>0$, (b) $\int_0^\infty\frac{dt}{G(t)}=\infty$ and (c) $G$ is nondecreasing.

 \noindent {Moreover, in the case of graphs, the bound \eqref{eq:desH-intro} is fulfilled when $H(x) \leq G^M(r_M(x)),   x \in M$, being  $G^M(s)=\int_0^sdr/G(r)$ for any function $G$ satisfying (a), (b) and (c).}
\end{theorem2*}
\noindent It is worth pointing out:
\begin{enumerate}[1)]
    \item When Sec$_M\geq 0$ the hypotheses (b) and (c)
can be removed (see Remark~\ref{re:k=0}).
\item {If one choose $G$ as an affine function, then $G^M$ becomes  a logarithm (as written explicitly in Corollary I below). }
    \item  {The estimate of $G^M$ in terms of $G$ in subsection \ref{s_extra} is sharp and, thus, so are the permitted growths of $H$ in terms of the distance $r_M$ respect to the intrinsic one $r$ for $g$. }
    
{However, the bound $\eqref{eq:desH-intro}$ is sharper  than the analogous one for $G^M$, as stressed in the example of the last section.
    This is expected, as it also happened for completeness (see the item \ref{item2}) below Theorem I). However, the $g_M$ bounds are relevant because of the limitations to ensure bounds for the hypersurfaces in  Beem-Ehrlich's result explained above. }
\end{enumerate}

 In Section \ref{sec:nonexistence}, this Omori-Yau principle  is used to  derive a  non-existence results for soliton,  which highlights the rigidity of this geometric structure. 
Specifically, our principle is combined with obstructions for the case of non-negative Ricci curvature  in \cite{BL, CQ}). This yields Theorem \ref{tII} which, in the case of graphs, can be restated from the PDE viewpoint as follows.

\begin{corollary1*}
Let $(M,g_M)$ be a complete Riemannian manifold with sectional curvature bounded from below Sec$_M\geq -c$ $(c\geq 0)$ and non-negative Ricci curvature. 

Let $c\in \R\setminus\{0\}$. 
There are no  entire  solutions on $M$ of the equation
\begin{equation}
    \label{e_soliton} {\rm div}^M\left(\frac{\nabla^M u}{\sqrt{1-|\nabla^M u|^2}}\right)=\frac{c}{\sqrt{1-|\nabla^M u|^2}},\end{equation}
with $|\nabla^M u|<1$ and satisfying: 
\begin{equation}
    \label{e_soliton2} \frac{1}{\, \sqrt{1-|\nabla^M u(x)|^2}\,} \leq G(r(x)),\quad \forall x \in M, 
\end{equation}
    
where $r(\cdot)$ is the distance in $M$ to a fixed point $o_M\in M$ with the Riemannian metric $g=g_M-du^2$ and 
 $G \in C^0([0,+\infty[)$ satisfies: 
 
 (a)  $G>0$, (b) $\int_0^\infty\frac{dt}{G(t)}=\infty$ and (c) $G$ is nondecreasing.
 
\noindent{In case $c=0$, conditions (b) and (c) can be dropped.} 

{Moreover, the bound \eqref{e_soliton2} holds if
\begin{equation}
    \label{e_soliton3} \frac{1}{\, \sqrt{1-|\nabla^M u(x)|^2}\,} \leq G^M(r_M(x)),\quad \forall x \in M, 
\end{equation}
where $r_M$ is the $g_M$-distance  to $o_M$ and $G^M(s)= \ln(As+B), s\geq 0$ (or it is chosen equal to any other  function constructed from $G$ as in Theorem II).}  
\end{corollary1*}
Notice that, here,  the growth of $G$ explicitly controls the   rate of convergence of $|\nabla^M u|$ to 1 preserving both metric completeness and the Omori-Yau principle. 

Finally, the detailed example constructed in Section \ref{sec:counter-example}, where $H$ has an affine growth,  shows the necessity of curvature bounds as above and suggests the possibility of  further issues in the study of solitons.


Taking into account our revision of  techniques in different fields, we introduce the framework of spacelike submanifolds even if our main results are focused on soliton hypersurfaces.
The paper is also written in a quite self-contained style, including some proofs of  known results with a consistent notation and framework so that the role of the different hypotheses can be better checked.

\section{Preliminaries}\label{s2}

\subsection{Background and Notation} 
\label{ssf}

Let $(M^{n}, \gm)$ be a complete Riemannian
manifold and take the natural  product \begin{equation} \label{eq:beg}\mt^{n+1}:=M^n\times \R, \quad \quad \gte:= \gm + \varepsilon \; dt^2 \quad \quad   \varepsilon^2=1,  
\end{equation}
which has Riemannian or Lorentzian signature, depending on $\varepsilon= 1$ or $\varepsilon=-1$, respectively.
The  superindeces $n, n+1$ will be removed when  no possibility of confusion exist. As we will focus in the Lorentzian case,   put $\gt := g_{\varepsilon=-1}, g_E := g_{\varepsilon=1}$.

A non-zero tangent vector $v$ is called, respectively,  timelike, lightlike or spacelike, when $\gt_\varepsilon (v,v)<0, \gt_\varepsilon(v,v)=0$ or  $\gt_\varepsilon(v,v)>0$. The two first possibilities  occur only in the Lorentzian case and, then, $v$ is called future directed if $\gt(v,\partial_t)<0$, i.e. when $v$ lies in the same cone as the natural timelike  vector field  $\partial_t$ of the product.

Given a $m$-manifold $\Sigma^m$ with $m\le n$ and $\tau_0 < 0 <\tau_1$,  consider a
smooth map
\begin{equation}
\fb:(\tau_0,\tau_1)\times \Sigma^m\to \mt
\end{equation}
such that, for all $\tau\in (\tau_0,\tau_1)$,  $\fb_{\tau} := \fb(\tau,\cdot)$ is an immersion which is spacelike,  that is, with $\fb_\tau^* \gte $
positive definite on $\Sigma$ (this is restrictive only when $\varepsilon=-1$). We denote
 $F:=\fb_0$. 
We will use consistently a bar $\overline{}$ and the superscript $^M$ for elements in $\mt$ and  $M$ and none will be used for the induced elements along the submanifolds  $\fb_{\tau}(\Sigma),\,\tau\in
(\tau_0,\tau_1),$ (i.e., on $\Sigma$ by pullback). In particular, $g$ will be the metric induced on each submanifold fromn $\gt$, and the convention will be used for the gradient/Levi Civita connections $\nablat , \nablam, \nabla$ and the divergence $\divt, \divm, \di$.


\begin{itemize}
    \item Eventually, for some expressions we will also use the notation
$$
|\cdot |_E , \qquad |\cdot |
$$
respectively, for the Riemannian norm associated with $g_E=g_M+dt^2$ and for the analogus expresion of $\gt$ on spacelike vectors (eventually $|\cdot |$ becomes the norm of $g_M$ for vectors in $TM$).

\item Given a isometrically immersed spacelike submanifold $F: \Sigma^m \rightarrow \mt^{n+1}$, we define the second fundamental form associated to $F$ as 
\begin{equation}
    \label{f_sff}
 II(X,Y):= \left( \nablat_{F_*X} F_* Y\right)^\perp,\end{equation}
where $X$ and $Y$ are smooth tangent fields on $\Sigma$. 
Given  a normal vector field $\mathbf{N}$ along $\Sigma$, we define 
\begin{equation}\label{f_sff2}
    II_{\mathbf{N}}(X,Y):= \varepsilon \; \gt(II(X,Y),\mathbf{N}).
\end{equation}
 \item When $\Sigma$ is a hypersurface, $\nu$ will be a choice of  (Gauss) unit normal vector, thus $\gt(\nu,\nu)=\varepsilon$.   In this case, we can write  
\begin{equation}
    II(X,Y) = II_\nu(X,Y) \nu, \quad \hbox{where} \; II_\nu(X,Y):=\varepsilon \; \gt(II(X,Y),\nu),\end{equation}
 which is called the scalar second fundamental form. In case $\varepsilon=-1$, $\nu$  will be chosen pointing to the  future. The Weingarten endomorphism will be denoted $A_\nu$, or simply $A$ if  there is no room to confusion. Consistently,
 \begin{eqnarray} II_\nu(X,Y) &= &\varepsilon g(A_\nu (X),Y), \label{eq:A1}\\
 - \nablat_X \nu & = & A_\nu (X). \label{eq:A2}\end{eqnarray}
 
\end{itemize}

 
\begin{definition}[Mean Curvature Flow] \label{d_mcf} 
 The submanifolds $\fb_{\tau}(\Sigma),\,\tau\in
(\tau_0,\tau_1),$ are evolving by their mean curvature vector
field if
\begin{equation}\label{eq:3}
\left(\frac{d\fb}{d\tau} \right)^\perp=\left(\fb_* \frac{\partial}{\partial \tau}\right)^\perp = \vec{H},
\end{equation}
 where
\begin{equation}
\vec{H} =\bigg(\sum_{i=1}^{m} 
\overline\nabla_{\fb_{\tau*}{\sf e}_i} \fb_{\tau*} {\sf e}_i\bigg)^\perp
\end{equation}
is the (non-normalized) mean curvature vector of $\fb_\tau$. Here and in what follows $\perp$ indicates the projection onto the normal bundle; the local  tangent frame $\{{\sf
e}_i\}_{i=1}^m$ is orthonormal with respect to the metric induced in $\Sigma$ by $\fb_\tau$. 
\end{definition}

The natural  vector field $\partial_t$ is  parallel  on $\mt$. We set $\Phi:\mathbb{R}\times \mt \to \mt$ to denote the
flow generated by $\partial_t$, namely, 
\[
\Phi(s, (x,t)) = \Phi_s(x,t)=(x,s+t), \quad (x,t)\in \mt^{n+1}, \,\, s\in \mathbb{R}.
\]

Motivated by the above geometric setting, we define a notion of translating soliton  with respect to the vector field $\partial_t$ as follows: 

\begin{definition}[Translating soliton]\label{soliton-definition}
An isometric immersion $F:\Sigma^m\to \mt^{n+1}_ \varepsilon$ is a \emph{mean
curvature flow soliton}  with respect to $\partial_t$ of velocity $c$ (or a $c$-translator for short) if
\begin{equation}
\label{solitonA-2} c \, \partial_t^\perp=\vec{H}
\end{equation}
along $F$ for some constant $c\in \mathbb{R}$, which will be assumed $c>0$ with no loss generality\footnote{If necessary we can compose with the isometry of $\mt$ given by $(x,t) \mapsto(x,-t).$}. With a slight abuse of notation, we also say that
the submanifold  $F(\Sigma^m)$ itself is the translating
soliton {\rm (with respect to the vector field $\partial_t$}. If $m=n$, that is, for codimension $1$, we  just write  $\Sigma$ and the condition
becomes
\begin{equation}
\label{soliton-scalar} H=\varepsilon c\,\gt ( \partial_t, \nu),
\end{equation}
where the mean curvature $H$ with respect to the normal vector field $\nu$ along $F$ is given by
\begin{equation}
\label{HHN}
\vec{H} = H \cdot \nu.
\end{equation}
\end{definition}
\begin{remark}[Sign of the mean curvature]\label{r_signH}
    It is important to notice that, from our conventions, one has:
    \begin{equation} \label{eq:traceA}
        \operatorname{trace}(A)=\varepsilon H \qquad
        \hbox{so that, for translating solitons with $\epsilon=-1$,} \quad H>0 .
    \end{equation}
\end{remark}
\begin{remark}\label{r_1}
    Notice that $F:\Sigma^m\to \mt^{n+1}$ is a mean curvature flow soliton with respect to $\partial_t$
     if $$ \fb(x,s):= \left(\Phi_{c\,s} \circ F \right)(x)$$ is a mean curvature flow in $\mt^{n+1}$ in the sense of \eqref{eq:3}.   In what follows, we will consider this flow for solitons, thus setting also the tangent part in Definition \ref{d_mcf}.
\end{remark}

\begin{definition} \label{def:h}
    For any given translating soliton $F: \Sigma \to \mt$, we  consider the height function $h : = t \circ F$, i.e., the restriction of the $t$-function to the submanifold.
\end{definition} Then, it is straightforward to check (from the choices above 
that
\begin{equation}
    \label{eq:funcion-h}
    F_*\nabla h = \varepsilon \, \partial_t^\top ,
\qquad  |\nabla h|^2 =\epsilon \left(1-\frac{H^2}{c^2}\right)  \end{equation}
where $^\top$ denotes component tangent to the hypersurface.
\begin{example}\label{product-example} {\rm (Translating graphs.)}  
Consider a domain $\Omega \subseteq M$ and a smooth function $u: \Omega  \to \R$ such that $1+\varepsilon |\nabla^M u|^2>0$ (this will  be a restriction only when $\varepsilon=-1$).  Let us define $\Sigma:={\rm Graph}(u)= \{ (x,u(x)) \; : \; x \in \Omega\}.$
It is not hard to see that $\Sigma$ is a translating soliton if, and only if, $u$ satisfies the quasilinear elliptic equation
\begin{equation}
\label{pde-parabolic}
{\rm div}_M \Big(\frac{\nabla^M u}{W}\Big)= \frac{c}{W},
\end{equation}
where 
\[
W = \sqrt{1+ \varepsilon |\nabla^M u|^2},
\]
 This notion of translating soliton has been extensively studied in $\R^{n+1}$ and $\LL^{n+1}$ (see for instance \cite{HIMW-1, LO}) 
 and this setting is the natural extension to (semi)-Riemannian products $M \times \mathbb{R}$. 
\end{example}

\subsection{Relevant properties of the height function $h$} 
The analysis of the height function (see Definition \ref{def:h}) for a translator  plays a fundamental role in this paper, and its properties are summarized in the following lemma. Its first assertion corresponds to equation (2.4) in \cite{BL}. Its proof is simple and it is included for self-containedness.  
The second one can be regarded as a particular case of a first variation formula for $H$ with respect to a deformation of the surrounding space generated by a transverse vector field. This assertion was also proved in \cite[Lemma 4.1]{BL} (see also \cite{AC}). Again, we have included the proof in order to make our paper self-contained (taking also into account that our conventions about the mean curvature are slightly different from the ones in \cite{BL}). More details about this variation formulas can be found in \cite{B1, CB, EH, FM}.

\begin{remark}\label{r_Codazzi} Along the paper, we use the convention $R(V,W)=\nabla_{[V,W]}-[\nabla_V,\nabla_W]$  so that, for $V,W,Z$ tangent to the hypersurface, Codazzi equation reads: $$\gt( (\nabla_V A)W,X)-\gt( (\nabla_W A)V, X) = 
\rt(V,W,\nu,X).$$  \end{remark}

\begin{lemma}\label{lem:alias}  Let $\left(M, \gm\right)$ be Riemannian and   $(\mt^{n+1}, \gt=g_M+\varepsilon d \, t^2)$ as above.  Let $F: \Sigma^n \rightarrow \mt$ be a spacelike $c$-translating soliton (not necessarily graphical). Then, given  a tangent vector field $X$  along $\Sigma$, we have that
\begin{equation}
    \label{lem:h}
\nabla^2 (c \,h)(X,X)=\varepsilon H \; g(A X,X),
\end{equation}
where $A$ is the Weingarten endomorphism   in   \eqref{eq:A2} under conventions \eqref{eq:A1}, \eqref{eq:traceA}. 

Moreover,
\begin{equation}
     \label{f_deltaH}
-\varepsilon \Delta H= g(\nabla H, \nabla(c h))+\left(\operatorname{Ric}_M\left(\nu_M, \nu_M\right)+|A|^2\right) H ,
\end{equation}
where $\nu_M$ represents the orthogonal projection of the Gauss normal 
$\nu$ onto $M$. 
\end{lemma}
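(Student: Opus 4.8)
The plan is to establish both identities by direct computation, the two recurring ingredients being that $\partial_t$ is $\nablat$-parallel and that the soliton equation fixes the normal component $\partial_t^\perp=(H/c)\,\nu$ (from \eqref{solitonA-2} and \eqref{HHN}).

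For the first identity \eqref{lem:h}, I would start from $F_*\nabla h=\varepsilon\,\partial_t^\top$ in \eqref{eq:funcion-h} and differentiate. Writing $\partial_t=\partial_t^\top+\partial_t^\perp$ with $\partial_t^\perp=(H/c)\,\nu$ and using $\nablat\partial_t=0$ gives $\nablat_{F_*X}(\partial_t^\top)=-\nablat_{F_*X}\big((H/c)\nu\big)$. The Weingarten relation $\nablat_X\nu=-A_\nu X$ from \eqref{eq:A2} then shows that, after discarding the normal term $(XH)\nu$, the tangential projection of this vector is $(H/c)A_\nu X$. Hence $F_*(\nabla_X\nabla h)=\varepsilon\,(H/c)\,A_\nu X$, and pairing with $Y$ and multiplying by $c$ yields $\nabla^2(c\,h)(X,Y)=\varepsilon H\,g(A_\nu X,Y)$, which specializes to \eqref{lem:h}.

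For the second identity \eqref{f_deltaH}, I would recast it as a divergence. From \eqref{lem:h} in endomorphism form one has $\nabla_X\nabla h=(\varepsilon H/c)\,A X$, while from $H=\varepsilon c\,\gt(\partial_t,\nu)$ (equation \eqref{soliton-scalar}) and the same Weingarten relation one computes $\nabla H=-c\,A\nabla h$. Thus $\Delta H=-c\,\di(A\nabla h)$, and in a local orthonormal frame $\{e_i\}$ I would expand $\di(A\nabla h)=\sum_i g\big((\nabla_{e_i}A)\nabla h,e_i\big)+\sum_i g\big(A\,\nabla_{e_i}\nabla h,e_i\big)$. The second sum equals $(\varepsilon H/c)\,|A|^2$ at once, by \eqref{lem:h}. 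In the first sum I would invoke Codazzi (Remark \ref{r_Codazzi}) to swap the covariant derivative onto $\nabla h$, producing the trace term $(\nabla h)(\operatorname{trace}A)=\varepsilon\,g(\nabla H,\nabla h)$ — here using $\operatorname{trace}A=\varepsilon H$ from \eqref{eq:traceA} — together with a residual curvature term $\sum_i \rt(e_i,\nabla h,\nu,e_i)$.

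The decisive step is the curvature term, and it is where the main obstacle lies. Because $\partial_t$ is parallel, every component of $\rt$ with a $\partial_t$ slot vanishes; substituting $F_*\nabla h=\varepsilon\,\partial_t-(\varepsilon H/c)\,\nu$ therefore collapses $\sum_i\rt(e_i,\nabla h,\nu,e_i)$ to $-(\varepsilon H/c)\sum_i\rt(e_i,\nu,\nu,e_i)$. Completing $\{e_i\}$ to an ambient frame with $\nu$ and using the product structure (all curvature touching the $\R$-factor vanishes) identifies $\sum_i\rt(e_i,\nu,\nu,e_i)$ with $-\operatorname{Ric}_M(\nu_M,\nu_M)$ under the paper's convention $R(V,W)=\nabla_{[V,W]}-[\nabla_V,\nabla_W]$. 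Assembling the three contributions and multiplying by $-\varepsilon$ produces \eqref{f_deltaH}. The genuine difficulty is purely bookkeeping: keeping the Codazzi sign, the chosen curvature convention, and the factors of $\varepsilon$ consistent, so that the curvature term emerges precisely as $+\operatorname{Ric}_M(\nu_M,\nu_M)\,H$ rather than with the opposite sign.
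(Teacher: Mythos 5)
Your argument is correct and follows essentially the same route as the paper: the Hessian identity comes from differentiating $\varepsilon\,\partial_t^\top=\varepsilon\partial_t-(\varepsilon H/c)\nu$ using that $\partial_t$ is parallel, and the Laplacian identity from $\nabla H=-c\,A\nabla h$ together with Codazzi, the Hessian formula for the $|A|^2$ term, and the product structure to reduce the ambient curvature term to $\operatorname{Ric}_M(\nu_M,\nu_M)$. The only difference is cosmetic --- you organize the second computation as $\Delta H=-c\,\di(A\nabla h)$ in a general orthonormal frame, whereas the paper contracts the Hessian of $H$ in a parallel frame diagonalizing $A$ at a point --- and your sign bookkeeping (in particular $\sum_i\rt(e_i,\nu,\nu,e_i)=-\ricct(\nu,\nu)$ under the paper's conventions, yielding the $+\operatorname{Ric}_M(\nu_M,\nu_M)H$ term) checks out.
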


\begin{proof}
    From the definition of the Hessian, \eqref{eq:funcion-h} and  that $\partial_t$ is parallel in $\mt$:
    \begin{multline*}
        \nabla^2 h(X,X)=  g( \nabla_X \nabla h, X)=  \gt( \nablat_X\left[\varepsilon \partial_t-\gt (\partial_t,\nu) \nu\right], X)\\ = - \gt( \partial_t ,\nu) \, \gt(\nablat_X\nu,X)=\gt( \partial_t ,\nu) \,\gt(AX,X).
    \end{multline*}
As being a $c$- translating soliton means $H=\varepsilon \, c \, \gt (\partial_t,\nu)$,
this expression becomes
$$  \varepsilon \frac{H}{c} \gt(AX,X),$$
as required.

For the last assertion, as $\Sigma$ is a translator, 
for any $X$ tangent to $\Sigma$: 
$$
\begin{aligned}
\gt(X, \nabla H) & =X(H)=X\left(\varepsilon c \gt\left(\nu, \partial_t\right)\right)=\varepsilon c \gt\left(\nablat_X \nu, \partial_t\right) \\
& =-\varepsilon c \gt\left(A X, \partial_t\right)=-\varepsilon c \gt\left(A X, \partial_t^{\top}\right)=-\varepsilon c \gt\left(X, A \partial_t^{\top}\right), 
\end{aligned}
$$
that is,
\begin{equation}\label{e_H_A}
    \nabla H=-\varepsilon c A \partial_t^{\top}.
\end{equation}
Using again $ \nablat_X  c \partial_t=0$, and (from \eqref{solitonA-2}, \eqref{HHN}) $c \partial_t=c \partial_t^{\top}+H \nu$, 
\begin{equation} \label{eq:12}
\begin{aligned}
0 & =\nablat_X (c \partial_t^{\top})+\nablat_X (H \nu)=\nabla_X (c \partial_t^{\top})+\Pi\left(X, c \partial_t^{\top}\right)  +X(H) \nu-H \, A X,
\end{aligned}
\end{equation}
and focusing on the tangent components, 
\begin{equation}\label{eq:12bis}
    \nabla_X (c \partial_t^{\top})=H A X.
    \end{equation}
Now, consider the following expression from \eqref{e_H_A}:
\begin{multline} \label{eq:12.5}
g\left(\nabla_X \nabla H, X\right)  = g\left(\nabla_X\left(-\varepsilon c A \partial_t^{\top}\right), X\right) \\ = -\varepsilon c\left(g\left(\left(\nabla_X A\right) \partial_t^{\top}, X\right)+g\left(A\left(\nabla_X \partial_t^{\top}\right), X\right)\right) 
  \\ = -\varepsilon c \left(g\left(\left(\nabla_X A\right) \partial_t^{\top}, X\right) + g\left(\nabla_X \partial_t^{\top}, A X\right)\right) \\ = -\varepsilon c g\left(\left(\nabla_{\partial_t^{\top}} 
 A\right) X, X\right) 
 +\varepsilon c \gt\left(\rt\left(X, \partial_t^{\top}\right) X, \nu\right)-\varepsilon  H g(A X, A X)
\end{multline}
For the last identity, use Codazzi's  (as in Remark \ref{r_Codazzi})
 and $g\left(\nabla_X (c \partial_t^{\top}), A X\right)=H g(A X, A X)$ from \eqref{eq:12bis}.

Next, this formula will be contracted and evaluated  at a fixed $p \in \Sigma$ by using an orthonormal frame $\left\{E_i\right\}_{i=1}^n$  which is parallel and orthonormal at $p$ and $\left\{E_i(p)\right\}_{i=1}^n$ is a base of eigenvectors of $A$, that is, $A E_i(p)=k_i(p) E_i(p)$. 
Let us focus on the term:

\begin{multline} \label{eq:13}
\sum_{i=1}^n \varepsilon c g_p\left(\left(\nabla_{\left.\partial_t^{\top}\right|_p} A\right) E_i, E_i(p)\right)   
\\ = \sum_{i=1}^n \varepsilon c \left(g_p\left(\nabla_{\left.\partial_t^{\top}\right|_p} A E_i, E_i(p)\right)-g_p\left(A \nabla_{\left.\partial_t^{\top}\right|_p} E_i, E_i(p)\right)\right) \\
 =\sum_{i=1}^n \varepsilon c\left(\left.\partial_t^{\top}\right|_p g\left(A E_i, E_i\right)-2 g_p\left(\nabla_{\left.\partial_t^{\top}\right|_p} E_i, A E_i(p)\right)\right) 
\end{multline}
The last term vanishes at $p$ by our choice of the frame at $p$, thus  \eqref{eq:13} becomes:
$$
\begin{aligned}
\sum_{i=1}^n \varepsilon c g_p\left(\left(\nabla_{\left.\partial_t^{\top}\right|_p} A\right) E_i, E_i(p)\right) & 
=\left.\partial_t^{\top}\right|_p\left(\sum_{i=1}^n \varepsilon c g\left(A E_i, E_i\right)\right) \\
& = c \left.\partial_t^{\top}\right|_p(H)=  c g_p\left(\left.\partial_t^{\top}\right|_p, \nabla H(p)\right)
\end{aligned}
$$
Finally, contracting \eqref{eq:12.5} at each  $p$, we get on the entire hypersurface
\begin{equation}\label{eq:juras}
    \Delta H(p)=-c g_p\left(\nabla H(p),\left.\partial_t^{\top}\right|_p\right)+\varepsilon c \ricct_p\left(\left.\partial_t^{\top}\right|_p, \nu(p)\right)-\varepsilon H(p)|A|(p)^2 .
\end{equation}
 From \eqref{eq:funcion-h}, we have that $c g\left(\nabla H, \partial_t^{\top}\right)=\varepsilon g(\nabla H, \nabla(c h))$. 
 Then, equation (\ref{eq:juras}) becomes:
\begin{equation}\label{eq:juras-2}
\Delta H=-\varepsilon g(\nabla H, \nabla(c h))+\varepsilon  \ricct\left(c \partial_t^{\top}, \nu\right)-\varepsilon H|A|^2 .
\end{equation}

From \eqref{solitonA-2}, \eqref{HHN}, and from
\eqref{soliton-scalar} we have  $c \partial_t=c \partial_t^{\top}+H \nu$ and  $\nu=\nu_M+\frac{H}{c} \partial_t$, respectively. Then, plugging the latter in the former, we get
$$
c \partial_t^{\top}=\left(c-\frac{H^2}{c}\right) \partial_t-H \nu_M .
$$
Therefore, 
$$
\begin{aligned}
\varepsilon \ricct\left(c \partial_t^{\top}, \nu\right) & =\varepsilon \ricct\left(\left(c-\frac{H^2}{c}\right) \partial_t-H \nu_M, \frac{H}{c} \partial_t+\nu_M\right) \\
& =-\varepsilon H \ricct\left(\nu_M, \nu_M\right)=-\varepsilon H \operatorname{Ric}_M\left(\nu_M, \nu_M\right) .
\end{aligned}
$$
If we include the above information in \eqref{eq:juras-2}, we obtain the required formula \eqref{f_deltaH}.
\end{proof}

\section{Completeness results}\label{s3}

Geodesic completeness as well as the completeness of vector fields under subaffine growth or, more precisely, under a {\em primary bound}, is well understood in different contexts, see the review \cite[Def. 1, Th. 1]{SaFields}  or recent \cite[Sect. 3]{EFSZ}. 
The case of completeness of spacelike submanifolds in  Lorentz-Minkowski spacetime was studied by Beem and Ehrlich \cite{BE}. Here, we will extend   their results to the case of a 
product $M\times \R$, discussing and sharpening their hypotheses in order to apply them to solitons.

\subsection{Completeness of spacelike submanifolds in the Lorentzian product $M\times \R$} \label{s3_1}

Next,  the following result which extends  \cite[Theorem 3.4]{BE} is proven and discussed.

\begin{theorem}\label{t_completeness} Let $\left(M, g_M\right)$ be a complete Riemannian manifold. Consider the product manifold $\mt=M \times \mathbb{R}$ and the metrics $\gt=g_M-d t^2$ and $\gE=g_M+d t^2$. Let $F: \Sigma^m \rightarrow \mt$ be a proper immersion of $\Sigma^m$ as a spacelike submanifold of $(\mt, \gt)$. 
Let  $|\cdot|_E $ be the norm with respect to the Riemannian metric $\gE:= g+dt^2$ and  put
$$
|\nu^{min}|_E(x):=\hbox{Min}\{|\nu(x)|_E: \nu(x) \; \hbox{is timelike, unit, normal to} \;T_{F(x)}\Sigma^m\}, \quad \forall x\in \Sigma^m. 
$$
Let
$r_E(\cdot)=\operatorname{dist}_\Sigma^E(o, \cdot)$ be the distance function from a fixed point $o \in \Sigma$ respect to the Riemannian metric $g_E:=F^*\left(\gE\right)$. Assume that $|\nu^{min}|_E$ is primarily bounded, that is,
\begin{equation}\label{e_primarybound}
|\nu^{min}|_E(x) \leq G\left(r_E(F(x))\right), \quad  \forall x \in \Sigma, 
\end{equation}
where $G: [0,\infty)\rightarrow [0, \infty)$,  is a continuous function satisfying:
$$
\text { (a) } G>0 \qquad \text{ and} \qquad \text{(b) } 
\int_{0}^\infty\frac{dr}{G(r)} =\infty.
$$
 Then, the natural induced metric $g= F^* \gt$ on $\Sigma^m$ is complete. 
 \end{theorem}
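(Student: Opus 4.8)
The plan is to compare the two Riemannian metrics induced on $\Sigma$, namely $g=F_\ast\gt$ and $g_E=F_\ast\gE$, and to reduce the completeness of $g$ to a statement about divergent curves. First I would record that $(\mt,\gE)$ is complete, being a Riemannian product of the complete $(M,\gm)$ with $\R$, and deduce from properness that $(\Sigma,g_E)$ is complete: a divergent curve $\gamma$ in $\Sigma$ is carried by the proper map $F$ to a divergent curve of $\mt$, whose $\gE$-length is infinite by completeness, and that length equals the $g_E$-length of $\gamma$ since $F$ is an isometric immersion for $\gE$. Because metric completeness is equivalent to every divergent curve having infinite length, and divergence is purely topological (hence the same for $g$ and $g_E$), it remains to show that every divergent curve $\gamma$ has infinite $g$-length.

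The heart of the matter is a pointwise comparison between $|\cdot|_g$ and $|\cdot|_E$ governed exactly by $|\nu^{min}|_E$. For a tangent vector $X$ to $\Sigma$ write $F_\ast X=X_M+a\,\partial_t$ with $X_M\in TM$, so that $g_E(X,X)=|X_M|^2+a^2$ and $g(X,X)=|X_M|^2-a^2$. Letting $T$ be the tangent field with $F_\ast T=\partial_t^\top$ and $q^2:=g(T,T)=\gt(\partial_t^\top,\partial_t^\top)$, the identity $a=-\gt(F_\ast X,\partial_t)=-g(X,T)$ and Cauchy--Schwarz give $a^2\le q^2\,g(X,X)$, whence, for $X$ of unit $g_E$-norm,
\begin{equation*}
g(X,X)\ \ge\ \frac{1}{1+2q^2}.
\end{equation*}
The decisive remaining point is to identify $1+2q^2$ with $|\nu^{min}|_E^2$. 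I would do this by showing that the minimizer of $|\cdot|_E$ among timelike unit $\gt$-normals is the normalization $\nu_0:=\partial_t^\perp/\sqrt{1+q^2}$ of $\partial_t^\perp$: a direct computation gives $|\nu_0|_E^2=1+2q^2$, and one verifies that $\nu_0$ is $\gE$-orthogonal to every normal vector $\gt$-orthogonal to it (the tangent directions of the normal hyperboloid), so $\nu_0$ is critical and hence minimal, since $|\cdot|_E\to\infty$ along that hyperboloid. This yields the sharp estimate $|X|_g\ge 1/|\nu^{min}|_E$ for every $g_E$-unit $X$. I expect this identification of the minimal normal to be the main obstacle, since it is where the codimension enters and where the geometric content of $|\nu^{min}|_E$ is pinned down; in the hypersurface case it collapses to the elementary relation $|\nu|_E^2=1+2|\nu_M|^2$ with a unique normal.

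Finally I would run an ODE comparison. Parametrize the divergent curve $\gamma$ by $g_E$-arclength on $[0,\infty)$, which is possible since $L_{g_E}(\gamma)=\infty$, and put $\rho(s):=r_E(\gamma(s))$. Combining the pointwise inequality with the primary bound \eqref{e_primarybound},
\begin{equation*}
L_g(\gamma)=\int_0^\infty|\gamma'|_g\,ds\ \ge\ \int_0^\infty\frac{ds}{|\nu^{min}|_E(\gamma(s))}\ \ge\ \int_0^\infty\frac{ds}{G(\rho(s))}.
\end{equation*}
Since $r_E$ is $1$-Lipschitz for $g_E$, $\rho$ is $1$-Lipschitz, so with $\Phi(r):=\int_0^r dr'/G(r')$ one has $\tfrac{d}{ds}\Phi(\rho(s))=\rho'(s)/G(\rho(s))\le 1/G(\rho(s))$ a.e., and integrating gives $\int_0^T ds/G(\rho(s))\ge \Phi(\rho(T))-\Phi(\rho(0))$. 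As $\gamma$ is divergent and $g_E$ is complete, $\rho(T)\to\infty$, so hypothesis (b) forces $\Phi(\rho(T))\to\infty$ and therefore $L_g(\gamma)=\infty$, proving completeness of $g$. Note this final step uses only (a) and (b): the substitution $\Phi$ absorbs any oscillation of $\rho$, so the monotonicity (c) is unnecessary, in agreement with the remark after Theorem I that (c) can be dropped when one works with $g_E$.
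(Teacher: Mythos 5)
Your proof is correct and its skeleton is the same as the paper's: a pointwise comparison of the $g$- and $g_E$-norms of tangent vectors controlled by $|\nu^{min}|_E$, followed by the substitution $\Phi(r)=\int_0^r dr'/G(r')$ that converts hypothesis (b) into infinite length (the paper's Lemma \ref{l_compl}). The differences are organizational. You run the estimate in the dual parametrization — a divergent curve at unit $g_E$-speed with $|\gamma'|_g\ge 1/G(r_E)$, rather than the paper's inextendible unit $g$-geodesic on $[0,b)$, $b<\infty$, with $|\gamma'|_E\le G(r_E)$ — but these are the single inequality $|X|_E\le |\nu^{min}|_E\,|X|_g$ read in two directions, so nothing is gained or lost there. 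The more substantive divergence is in how that inequality is obtained: the paper's Lemma \ref{lem:X} proves $|X|_E\le|\nu|_E$ for \emph{every} unit timelike normal $\nu$ orthogonal to $X$ (decomposing both $X$ and $\nu$ into $M$- and $t$-components and using Cauchy--Schwarz in $g_M$), so the minimum over normals comes for free and the minimizer is never identified; you instead derive $g(X,X)\ge 1/(1+2q^2)$ from $\partial_t^\top$ and then must prove $|\nu^{min}|_E^2=1+2q^2$. Your identification is true, but the step ``critical and hence minimal, since $|\cdot|_E\to\infty$'' is incomplete as stated: properness only guarantees the minimum is attained at \emph{some} critical point, so you still need that $\nu_0=\partial_t^\perp/\sqrt{1+q^2}$ is the \emph{unique} critical point on the future sheet of the normal hyperboloid. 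This follows from your own Lagrange-multiplier computation ($2\,dt(\nu)\,dt(w)=(\lambda-1)\gt(\nu,w)$ for all normal $w$, with $dt(\nu)\neq 0$ for timelike $\nu$, forces $\nu\parallel\partial_t^\perp$), so the gap closes in one line; still, the paper's formulation of the key lemma is the cleaner way to handle arbitrary codimension. Your closing observation that only (a) and (b) are used, monotonicity of $G$ being absorbed by the substitution $\Phi$, matches the paper exactly.
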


\begin{remark}\label{r_complete} (1) Even though at each point $x$ a vector $\nu$ with minimum $|\nu_E|(x)$ must exist, the global existence of such a vector field is not required.

(2)    In  case that $\Sigma$ is a hypersurface, the bound \eqref{e_primarybound} applies just putting $|\nu^{min}|_E(x) = |\nu|_E(x) $ for $\nu$ in the unique normal direction.  

(3) In the case of a graph $u$, the bound \eqref{e_primarybound} under hypotheses (a) and (b) for $G$ becomes equivalent to the bound
    \begin{equation}
        \label{e_compl} |\nabla^M u|^2(x) \leq 1-\frac{1}{G(r_E(x))^2}.
    \end{equation}
    (see  Prop. \ref{p_extra} for related computations). In particular, one can choose $G$ as an affine function $G(r)=Ar+B$, with $A,B>0$ obtaining then the sufficient condition
    $$
    |\nabla^M u|^2(x) \leq 1-\frac{1}{(Ar_E(x)+B)^2}
    $$
(see   Remark \ref{r_BE} for finer estimates).

\end{remark} 
For the proof, consider first the following two lemmas; the first one a straightforward extension of  \cite[Lemma 3.1]{BE}, included  here for the convenience of the reader. 
\begin{lemma}  \label{lem:X}
Let $\nu$ be a unit timelike normal vector field in $\mt.$ Consider  a unit spacelike vector field $X$ such that $\gt(X,\nu)=0,$ then
\[ |X|_E \leq |\nu |_E,\]
 Moreover, the equality holds if, and only if, $X$, $\nu$ and $\partial_t$ lie in the same $2$-plane.
\end{lemma}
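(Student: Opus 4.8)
The plan is to decompose every vector into its $TM$-part and its $\partial_t$-part and reduce the statement to an elementary Cauchy-Schwarz estimate. Write $\nu=\nu_M+\nu_t\,\partial_t$ and $X=X_M+X_t\,\partial_t$ with $\nu_M,X_M\in TM$ and $\nu_t,X_t\in\R$. In these terms $\gt(v,v)=|v_M|^2-v_t^2$ and $|v|_E^2=|v_M|^2+v_t^2$ for any $v$, where $|\cdot|$ is the $g_M$-norm on $TM$. The hypotheses $\gt(\nu,\nu)=-1$ and $\gt(X,X)=1$ then read $\nu_t^2=1+|\nu_M|^2$ and $|X_M|^2=1+X_t^2$, whence $|\nu|_E^2=1+2|\nu_M|^2$ and $|X|_E^2=1+2X_t^2$. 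Thus the desired inequality $|X|_E\le|\nu|_E$ is equivalent to the single scalar inequality $X_t^2\le |\nu_M|^2$.

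To establish this, I would feed the orthogonality condition into Cauchy-Schwarz. Expanding $0=\gt(X,\nu)=g_M(X_M,\nu_M)-X_t\nu_t$ gives $g_M(X_M,\nu_M)=X_t\nu_t$, so by the Cauchy-Schwarz inequality for $g_M$,
\[
X_t^2\,\nu_t^2=g_M(X_M,\nu_M)^2\le |X_M|^2\,|\nu_M|^2.
\]
Substituting $\nu_t^2=1+|\nu_M|^2$ and $|X_M|^2=1+X_t^2$ and cancelling the common term $X_t^2|\nu_M|^2$ on both sides yields exactly $X_t^2\le|\nu_M|^2$, which proves the inequality.

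For the equality case, the only estimate used is Cauchy-Schwarz, so $|X|_E=|\nu|_E$ holds if and only if $g_M(X_M,\nu_M)^2=|X_M|^2|\nu_M|^2$, i.e. if and only if $X_M$ and $\nu_M$ are linearly dependent in $TM$. Note that $X_M\neq 0$, since $|X_M|^2=1+X_t^2\ge 1$. If $\nu_M\neq 0$ as well, linear dependence means both are multiples of a single unit $e\in TM$, so $X,\nu\in\operatorname{span}\{e,\partial_t\}$, a common $2$-plane that also contains $\partial_t$. The degenerate subcase $\nu_M=0$ forces $\nu=\pm\partial_t$, and then orthogonality gives $X_t\nu_t=0$, hence $X_t=0$, so again $X,\nu,\partial_t$ all lie in $\operatorname{span}\{X_M,\partial_t\}$. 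Conversely, if the three vectors lie in a common $2$-plane $P$, then $P$ contains $\partial_t$ and $X_M=X-X_t\partial_t$; since $\partial_t\notin TM$ the intersection $P\cap TM$ is a line, necessarily $\R\,X_M$, and $\nu_M=\nu-\nu_t\partial_t\in P\cap TM$ forces $\nu_M$ parallel to $X_M$. This closes the equivalence.

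I do not expect a genuine obstacle here: the content is purely the reverse Cauchy-Schwarz phenomenon arising from the two opposite signatures, and the only point requiring slight care is tracking the equality analysis through the degenerate case $\nu_M=0$ and recording that $X_M$ never vanishes.
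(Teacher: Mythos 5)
Your proof is correct and follows essentially the same route as the paper: decompose $X$ and $\nu$ into their $TM$ and $\partial_t$ components, translate the unit and orthogonality conditions into scalar identities, and reduce the inequality to the Cauchy--Schwarz inequality $g_M(X_M,\nu_M)^2\le |X_M|^2|\nu_M|^2$, with equality precisely when $X_M$ and $\nu_M$ are collinear. Your treatment of the degenerate case $\nu_M=0$ in the equality analysis is slightly more explicit than the paper's, but the substance is identical.
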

\begin{proof}
    We decompose $\nu=\nu_M+ \nu_{\R} \cdot \partial_t$ and $X= X_M+ X_{\R} \cdot \partial_t$, where the subscript $(\cdot )_M$ means the projection to $TM.$
Using that $\nu$ and $X$ are unit:
\begin{equation} \label{eq:E}
|\nu|_E^2-|X|_E^2= |\nu_M|^2 +\nu_{\R}^2-|X_M|^2-X_{\R}^2=2 \left( 1+|\nu_M|^2 -|X_M|^2\right),
\end{equation}
    and using that they are orthogonal, $\nu_{\R}X_{\R}= g(X_M,\nu_M)$, thus,
\begin{equation} \label{eq:perp}
 g(X_M,\nu_M)^2= |\nu_M|^2 \,   |X_M|^2 -|\nu_M|^2 +|X_M|^2-1.
\end{equation}
Using \eqref{eq:E} and \eqref{eq:perp} and taking Cauchy-Schwartz inequality into account, we get:
\begin{equation}
    \label{eq:ine-norm} 
    |X|_E^2-|\nu |_E^2= 2 \left( g(X_M,\nu_M)^2- |\nu_M|^2 \,   |X_M|^2\right) \leq 0.
\end{equation}
The equality only holds if and only if $X_M$ and $\nu_M$ are colinear, that is, when  Span$\{\partial_t, \nu, X\}$ = Span$\{\partial_t, \nu_M, X_M\}$is a plane.
\end{proof}

\begin{lemma}
    Let $(\mt,\gE)$ be a complete Riemannian manifold and $\gamma:[0,b) \rightarrow \mt$ a curve not contained in a compact subset. If $|\gamma'|_E$ is primarily bounded as in \eqref{e_primarybound} then $\gamma$ is complete, i.e., $b=\infty$.
\end{lemma}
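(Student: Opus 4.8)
The plan is to argue by contradiction, reducing the statement to a one-dimensional comparison along the $\gE$-distance and then invoking completeness of the ambient metric. Assume $b<\infty$; I will show that the image of $\gamma$ is then bounded, contradicting the hypothesis that it escapes every compact set. Fix a base point $o\in\mt$ and set $\rho(s):=r_E(\gamma(s))$, the $\gE$-distance from $o$ to $\gamma(s)$. Since $r_E$ is $1$-Lipschitz for $\gE$ (triangle inequality) and $\gamma$ is locally Lipschitz, the composition $\rho$ is locally Lipschitz, hence absolutely continuous on each $[0,s_0]\subset[0,b)$ and differentiable almost everywhere, with
\[
|\rho'(s)| \le |\gamma'(s)|_E \le G(\rho(s)) \qquad \text{for a.e. } s\in[0,b),
\]
the last inequality being precisely the primary bound assumed on $|\gamma'|_E$ as in \eqref{e_primarybound}.

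Next I encode hypotheses (a)--(b) into the auxiliary function $\Gamma(\tau):=\int_0^\tau \frac{dr}{G(r)}$. By (a) the integrand is positive, so $\Gamma$ is continuous, strictly increasing, with $\Gamma(0)=0$; by (b), $\Gamma(\tau)\to\infty$ as $\tau\to\infty$, so $\Gamma$ is a homeomorphism of $[0,\infty)$ onto itself with increasing inverse $\Gamma^{-1}$. Differentiating the composition and using $\rho'\le G(\rho)$ together with $G>0$ gives $\frac{d}{ds}\Gamma(\rho(s))=\rho'(s)/G(\rho(s))\le 1$ for a.e. $s$; integrating this absolutely continuous function from $0$ to $s$ yields
\[
\Gamma(\rho(s)) \le \Gamma(\rho(0)) + s \qquad \text{for all } s\in[0,b).
\]

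Now comes the contradiction. If $b<\infty$, the right-hand side is bounded above by the finite constant $C:=\Gamma(\rho(0))+b$, so $\rho(s)\le \Gamma^{-1}(C)=:R$ for every $s\in[0,b)$. Hence $\gamma([0,b))$ is contained in the closed $\gE$-ball $\overline{B}_E(o,R)$, which is compact because $(\mt,\gE)$ is complete (Hopf--Rinow). This contradicts the assumption that $\gamma$ is not contained in any compact subset, forcing $b=\infty$.

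I expect the only delicate points to be bookkeeping rather than conceptual. Since $r_E\circ\gamma$ is merely Lipschitz and not $C^1$, one must pass through absolute continuity to legitimately integrate $\rho'/G(\rho)$ (and integrate only up to $s<b$, where $\gamma'$ is bounded on the compact subinterval, so no issue arises from a possible blow-up of $|\gamma'|_E$ near $b$). The other point is the translation of the topological hypothesis ``not contained in a compact subset'' into the metric statement that $\rho$ is unbounded, which is exactly where completeness of $(\mt,\gE)$ enters. The comparison itself is the standard mechanism by which the divergence condition $\int^\infty dr/G=\infty$ prevents the parameter interval from being finite.
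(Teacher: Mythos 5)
Your proof is correct and follows essentially the same route as the paper's: bound the a.e.\ derivative of $s\mapsto r_E(\gamma(s))$ by $G(r_E(\gamma(s)))$, integrate $\rho'/G(\rho)\le 1$, and use completeness of $\gE$ (Hopf--Rinow) to turn ``escapes every compact set'' into ``$r_E\circ\gamma$ is unbounded.'' The only cosmetic difference is that you package the comparison via the increasing homeomorphism $\Gamma$ and its inverse, whereas the paper directly concludes that $\int_0^{r(t)}dr/G(r)\le t<b$ is incompatible with the divergence of the integral; the two formulations are logically identical.
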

    
\begin{proof}\label{l_compl}
    Consider $o$, $r_E$ as above and the  function 
    $$r:[0,b)\rightarrow [0,\infty), \qquad t\mapsto r(t):=r_E(\gamma(t)).$$
    This function $r$ is onto because of the completeness of $g_E$ and it is easy to check that it is a.e. differentiable using that the distance function is Lipschitz\footnote{Notice that this happens even if $\gamma(t)$ remains in the cut locus of $o$ for $t$ in a set of non-zero measure (see for example,  the proof of \cite[Prop. 3.1]{EFSZ}).}. What is more, $|\nabla^E r|=1$ when smooth and, denoting by a dot the a.e. derivative:
    $$
    |\dot r(t)| \leq |\gamma'(t)|_E \leq G(r(t)),
    $$
    the last inequality by hypothesis and the definition of $r$. Integrating the inequality $1\geq |\dot r|/G(r)$,
    $$
    t\geq \int_0^t \frac{|\dot r(t)|}{G(r(t))}dt \geq \int_0^t \frac{\dot r(t)}{G(r(t))}dt=\int_0^{r(t)} \frac{dr}{G(r)}
    $$
    (for the last equality, recall the sign preserving change of variables in dimension 1).
    As there are points with $r(t)$ arbitrarily big, the integral on the right diverges by the hypothesis (ii) on $G$, thus,  $b$ cannot be finite.
\end{proof}

\begin{proof} [Proof of Theorem \ref{t_completeness}.]  Let $\gamma:[0, b) \rightarrow \Sigma$ be an inextendible unit geodesic in $(\Sigma,g)$. Assuming by contradiction $b<\infty$, $\gamma$ cannot be contained in any compact subset. 

By Lemma \ref{lem:X}, $\left|\gamma^{\prime}(t)\right|_E \leq|\nu^{min}(\gamma(t))|_E$ and, by hypothesis,
\begin{equation} \label{eq:5.2.3}
 \left|\gamma^{\prime}(t)\right|_E \leq G\left(r_E(x)\right) .
  \end{equation}
So, Lemma \ref{l_compl} applies yielding the contradiction $b=\infty$. 
\end{proof}

\begin{remark}[Comparison with  Beem and Ehrlich's result] \label{r_BE}
(1)    In \cite[Th. 3.4]{BE}, $G$ is chosen directly as an affine  function $G(r)=Ar+B$, which is the optimal polynomial growth to obtain completeness. However, it is still possible to choose    better non-polynomial growths such as   $G(r)=A \, r\ln^k(1+r) +B$ with $k>0$.\footnote{Other technical  conditions explored in \cite[p. 218]{BE} are also improved. 
What is more, it is straightforward to check that our primary bound becomes not only sufficient but also necessary  for $\LL^2$ (compare with \cite[Corollary 3.6]{BE}).}
    
 (2)   For these estimates of growth,  $G'\geq 0$. This assumption is natural in order to define types of  {\em growth} and it permits to simplify proofs\footnote{Compare with the proof of \cite[Th. 3.4]{BE}. In particular, notice that, in our Lemma \ref{l_compl}, if one takes the unit speed reparametrization  $\hat \gamma$ of $\gamma$, (which  satisfies $r_E(\hat \gamma(t))\leq t$) then $G(r_E(\hat \gamma(t))\leq G(t)$.}. However, the next example shows that   this assumption imposes a  restriction. 
\end{remark}

\begin{example} \label{ex_compl}
   In Lorentz-Minkowski $\mt=\R^2, \gt=dx^2-dt^2$, construct a spacelike curve $\gamma(x)=(x,u(x)), x\in \R$ (which will be regarded as a spacelike hypersurface), by repeating the following pattern  in each interval of length five 
   $I_n:= [5n,5(n+1)]$, for all $n$ non-negative integer:
   $$\begin{array}{rll}
   u(x)= 0     &  \hbox{if} & x\in (-\infty,0] \\
   u(x)= 0     &  \hbox{if} & x\in  [5n,5n+1] \cup [5n+4,5(n+1)] \\
       u(x)= 1     &  \hbox{if} & x\in [5n+2+\frac{1}{2^{|n|+1}},5n+3-\frac{1}{2^{|n|+1}}]. 
   \end{array}
   $$
   It is not difficult to check that such a smooth $u$ exists, as the interval is chosen such that  $|\dot u:=du/dx|<1$, thus  preserving the spacelike character. However, by the mean value theorem, $\dot u$ must satisfy, at some point $x_n$ of each interval $I_n$,
   $$\dot u(x_n)=\frac{1}{1+2^{-(|n|+1)}}=\frac{2^{|n|+1}}{2^{|n|+1}+1}.$$
   Taking into account \eqref{e_compl}, the optimal function $G$ to ensure completeness is:
   $$
   G(x)=\frac{1}{\sqrt{1-\dot u^2}}, \qquad \hbox{thus} \qquad G(x_n)\geq 2^{(|n|-1)/2}. $$
   Clearly, this choice of $G$ satisfies the requeriments of Theorem \ref{t_completeness}. However, no monotonous function $G^*$ can satisfy them. Indeed, $G^*$ should satisfy $G^*(x_n)\geq G(x_n)\geq 2^{(|n|-1)/2}$, which would turn out incompatible with hypothesis (b) therein.
\end{example}

Nevertheless, if $G$ is monotonous, a condition similar to \eqref{e_primarybound}
in terms of the $M$ part will suffice for graphs (Corollary \ref{c_complgraph}). This is specially interesting, as the distance $d_M$  of $M$ is known a priori.

\subsection{The case of hypersurfaces and graphs}\label{s3_2a}
Next, we will restrict to  codimension one. 
We start with some preliminary properties of hypersurfaces.
\begin{proposition} \label{p_cover}
     Consider the Lorentzian product $\mt=M \times \mathbb{R}$, $\gt=g_M-d t^2$ and  
any   immersed spacelike hypersurface $\Sigma$:

(1) If $g_M$ is incomplete then $\Sigma$ is incomplete.

(2) If $g_M$ is complete and $\Sigma$ is proper, then the projection $\Pi: \Sigma \rightarrow M$ is a covering map. Thus, in this case $\Sigma$ is complete if and only if the graph naturally associated with the universal covering $\tilde \Sigma$ in $\tilde M\times \R$ (see  \eqref{e_universal} below) is complete.
\end{proposition}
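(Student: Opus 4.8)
The two parts share a common geometric core, so I would begin by recording the behaviour of the projection. Write $\phi:=\Pi\circ F:\Sigma\to M$ and recall $h=t\circ F$ (Definition \ref{def:h}). For $X\in T_p\Sigma$ one has $F_*X=(d\phi(X),dh(X))$ in the splitting $TM\oplus\R\partial_t$, so that
\begin{equation*}
g(X,X)=|d\phi(X)|_{g_M}^2-dh(X)^2.
\end{equation*}
Since $\Sigma$ is spacelike ($g>0$), this forces $d\phi$ to be injective: if $d\phi(X)=0$ then $g(X,X)=-dh(X)^2\le 0$, whence $X=0$. As $\dim\Sigma=\dim M=n$, the map $\phi$ is therefore a local diffeomorphism. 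The same identity yields the two pointwise estimates I will use throughout: the length comparison $g\le\hat g$, where $\hat g:=\phi^*g_M=g+dh^2$ is the Riemannian pullback metric, and the slope control $|dh(X)|<|X|_{\hat g}$. In words, $\phi$ never decreases lengths and the height varies strictly slower than the horizontal $g_M$-length of a curve.

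For part (1) I would argue by contraposition, proving that $(\Sigma,g)$ complete implies $(M,g_M)$ complete. Since $\phi$ is a length non-decreasing local diffeomorphism from a complete manifold, it is a covering map: any lift of a (piecewise-smooth) path $\sigma$ has $g$-length at most the $g_M$-length of $\sigma$, so a lift cannot leave the metrically complete $\Sigma$ in finite parameter and hence extends over the whole domain, which is exactly the path-lifting property characterizing coverings onto a connected base. Granting this, I would take any divergent curve $\sigma:[0,b)\to M$ and lift it to $\tilde\sigma$ in $\Sigma$; as $\phi\tilde\sigma=\sigma$ is divergent, $\tilde\sigma$ is divergent in the complete $\Sigma$ and so has infinite $g$-length, whence $\mathrm{length}_{g_M}(\sigma)=\mathrm{length}_{\hat g}(\tilde\sigma)\ge\mathrm{length}_g(\tilde\sigma)=\infty$. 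Since every divergent curve in $M$ then has infinite length, $M$ is complete; equivalently, $g_M$ incomplete forces $\Sigma$ incomplete.

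Part (2) is the delicate one, because now $\Sigma$ is not assumed complete and properness must play the role that completeness played above. The plan is to transfer the problem to the auxiliary metric $\hat g$, for which $\phi$ is a genuine local isometry, prove that $(\Sigma,\hat g)$ is complete, and then invoke the classical fact that a local isometry from a complete Riemannian manifold is a covering. To see $\hat g$ is complete, I would rule out a divergent curve $\gamma:[0,b)\to\Sigma$ of finite $\hat g$-length $\Lambda$: its projection $\phi\gamma$ has $g_M$-length $\le\Lambda$, hence stays in the closed $g_M$-ball of radius $\Lambda$ about $\phi\gamma(0)$, which is compact because $M$ is complete (Hopf--Rinow); meanwhile the slope control gives $|h(\gamma(s))-h(\gamma(0))|\le\Lambda$, so $h\gamma$ remains in a bounded interval $J$. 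Thus $F\gamma$ stays in the compact box $\bar B\times J\subset\mt$, and properness of $F$ confines $\gamma$ to the compact set $F^{-1}(\bar B\times J)$, contradicting its divergence. This combination of the spacelike slope bound, completeness of $M$, and properness of $F$ is the step I expect to be the main obstacle, since it is exactly where all three hypotheses must interlock.

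Finally, for the ``Thus'' clause I would pass to universal covers. Since $\phi$ is a covering, the induced projection of the universal cover $\tilde\Sigma\to\tilde M$ is a covering onto the simply connected $\tilde M$, hence a diffeomorphism, which exhibits $\tilde\Sigma$ as a graph in $\tilde M\times\R$ (the construction \eqref{e_universal}). As the covering $\tilde\Sigma\to\Sigma$ is a local isometry for the induced metric $g$, and Riemannian coverings preserve metric completeness in both directions, $\Sigma$ is complete if and only if $\tilde\Sigma$, i.e.\ the associated graph, is complete.
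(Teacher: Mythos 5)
Your proof is correct and follows essentially the same route as the paper's: for part (2), both of you pull back $g_M$ to $\Sigma$ so that the projection becomes a local isometry, show that this metric is complete by trapping any finite-length divergent curve inside a compact product set of $\mt$ (completeness of $M$ controls the base component, the spacelike inequality $|dh(X)|<|X|_{\hat g}$ controls the height, and properness of $F$ finishes), and then invoke the standard fact that a local isometry from a complete Riemannian manifold is a covering map, with the same universal-cover diagram for the final clause. The only difference is cosmetic: in part (1) the paper argues directly, partially lifting a finite-length divergent curve of $M$ to a finite-length inextensible curve of $\Sigma$ via $g\le\Pi^*g_M$, whereas you prove the contrapositive through the covering lemma — both rest on the same length comparison.
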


\begin{proof} We will use that $\Pi$ is a local diffeomorphism for spacelike hypersurfaces.

(1)  If $x_0\in \Pi(\Sigma)$ and $\gamma:[0,b)\rightarrow M$, $b<\infty$, is a unit diverging curve in $M$ starting at $x_0$, then there exists an inextensible lift $\tilde \gamma: [0,b')\rightarrow \Sigma$, where $ b'\leq b$, so that $\Pi\circ \tilde\gamma$ $ =\gamma|_{[0,b')}$. As $g(\tilde \gamma'(s),\tilde \gamma'(s))\leq g_M(\gamma'(s),\gamma'(s))$, the length of $\tilde \gamma$ is bounded by $b'$ and incompleteness follows.    

    (2)   Endowing $\Sigma$ with the pullback metric $\tilde g_M:=\Pi^*g_M$,  
    $\Pi$ becomes a local isometry, and it is enough to check that it is complete  (apply for example \cite[Corollary 7.29]{Oneill}).
    
     Otherwise,  there exists  a unit inextensible curve  $\tilde \gamma: [0,b)\rightarrow \Sigma$ of finite $\tilde g_M$-length $b$. Let us prove that $\tilde \gamma$ lies in a compact subset of $\mt$, in contradiction  with properness.
    
The projection $\gamma:= \Pi\circ \tilde \gamma$ can be extended to a point 
   $x_b:=\lim_{s\rightarrow b} \gamma(s)$ by the completeness of $(M,g_M)$. The existence of convex neighborhoods in Riemannian manifolds ensures that, for small $\epsilon>0$,  there exists a point $x_\epsilon=\gamma(b-\epsilon)$ such that the closed ball $\bar B(x_\epsilon, \epsilon)$ is included in a normal neighborhood of $x_\epsilon$ in $(M,g_M)$, and also that the compact set $\bar B(x_\epsilon, \epsilon)\times [-\epsilon,\epsilon]$ is included in a normal neighborhood of $\tilde \gamma(b-\epsilon)$ in $(\mt,\gt)$. Then, this compact set includes $\tilde \gamma|_{[b-\epsilon,b)}$, as required.

   For the last assertion, 
   the universal covering $\tilde \Sigma$ of $\Sigma$ yields the commutative diagram
of local isometries (with the metric naturally induced on $\tilde \Sigma$) 
\begin{equation}
    \label{e_universal}
\begin{array}{ccc}
   \tilde \Sigma  &  & \tilde M\times \R \\
   \downarrow   & \searrow & \downarrow \\
   \Sigma & \hookrightarrow & M\times \R
\end{array} 
\end{equation}
which induces an isometric  lift $\tilde \Sigma \hookrightarrow \tilde M\times\R $ so that $\tilde M$
can be regarded as a graph on the universal covering $\tilde M$ of $M$. 
\end{proof}

Part (1) justifies to assume the completeness of $g_M$ in what follows. By (2), one can focus on the case of graphs, otherwise taking lifts in the universal covering. 

\begin{corollary}\label{c_complgraph} Under the hypotheses of Theorem \ref{t_completeness} except  \eqref{e_primarybound}, assume that $\Sigma$ ($= \Sigma^{m=n}$) is the graph of a function $u$.
Assume additionally: $(c)$  $G$ is non-decreasing.

Let $r_M(\cdot )=$ dist$_M(o_M,\cdot )$ be the distance function in $(M,g_M)$ from a fixed point $o_M\in M$ and $\nu_M$ the orthogonal projection of $\nu$ on  $TM$. 
 If the inequality  
    \begin{equation}\label{e_compl_M}
|\nu_M|(x)\leq G(r_M(x)) \qquad   \forall x\in M,        
    \end{equation}
holds (instead of \eqref{e_primarybound}), then the hypersurface $(\Sigma, g)$ is complete. 
\end{corollary}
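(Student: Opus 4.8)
The plan is to deduce this corollary directly from Theorem~\ref{t_completeness} by converting the intrinsic $g_M$-bound \eqref{e_compl_M} into a primary bound of the form \eqref{e_primarybound} for a suitable auxiliary function $\tilde G$. Two ingredients are required: a pointwise algebraic relation expressing $|\nu|_E$ through $|\nu_M|$, and a comparison between the base distance $r_M$ and the extrinsic distance $r_E$ that lets the monotone $G$ be transported from one to the other.

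First I would record the algebraic relation. Writing the future unit normal as $\nu=\nu_M+\nu_{\R}\,\partial_t$ with $\nu_M\in TM$, the timelike unit condition $\gt(\nu,\nu)=-1$ gives $\nu_{\R}^2=1+|\nu_M|^2$, whence
$$|\nu|_E^2=|\nu_M|^2+\nu_{\R}^2=1+2\,|\nu_M|^2.$$
No graph computation is needed here, only that $\nu$ is unit timelike; and since $\Sigma$ is a hypersurface one has $|\nu^{min}|_E=|\nu|_E$ by Remark~\ref{r_complete}(2).

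Next I would compare distances. Choosing the base point $o\in\Sigma$ so that it projects onto $o_M$, I claim $r_M(x)\le r_E(F(x))$ for all $x$. Indeed, for a tangent vector $v$ on $\Sigma$ the value $g_E(v,v)$ equals $|(\Pi\circ F)_*v|_{g_M}^2$ plus the square of the $\partial_t$-component of $F_*v$, so $g_M((\Pi\circ F)_*v,(\Pi\circ F)_*v)\le g_E(v,v)$; thus $\Pi\circ F:(\Sigma,g_E)\to(M,g_M)$ is distance non-increasing, and projecting a minimizing $g_E$-path from $o$ to $F(x)$ yields the inequality. Using the monotonicity hypothesis (c) together with \eqref{e_compl_M},
$$|\nu_M|(x)\le G(r_M(x))\le G(r_E(F(x))).$$

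Combining the two ingredients gives $|\nu|_E(x)=\sqrt{1+2|\nu_M|^2}\le\sqrt{1+2\,G(r_E(F(x)))^2}=:\tilde G(r_E(F(x)))$, where $\tilde G(s):=\sqrt{1+2\,G(s)^2}$, which is exactly the form \eqref{e_primarybound}. It then remains only to verify that $\tilde G$ satisfies (a) and (b), after which Theorem~\ref{t_completeness} applies and yields completeness of $(\Sigma,g)$. Condition (a) is immediate as $\tilde G\ge 1$. For (b), from $\tilde G(s)\le\sqrt2\,(1+G(s))$ one gets $1/\tilde G(s)\ge(\sqrt2\,(1+G(s)))^{-1}$, and $\int_0^\infty ds/(1+G(s))=\infty$ follows from $\int_0^\infty ds/G=\infty$: if $G$ is bounded the integrand is bounded below by a positive constant, while if $G\to\infty$ (as it is non-decreasing) then $1+G\le 2G$ eventually. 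The only points demanding care are this divergence check for the auxiliary $\tilde G$ and the correct alignment of the base points $o$ and $o_M$ in the distance comparison; the monotonicity (c) enters precisely in pushing $G$ through the inequality $r_M\le r_E$, which is where that hypothesis becomes indispensable.
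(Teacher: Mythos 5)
Your proof is correct and follows essentially the same route as the paper's: combine the algebraic identity relating $|\nu|_E$ to $|\nu_M|$ with the distance comparison $r_M\le r_E$ (projection is $g_E$-to-$g_M$ distance non-increasing) and the monotonicity of $G$, then invoke Theorem~\ref{t_completeness}. In fact you are slightly more careful than the paper at the algebraic step: since $|\nu|_E^2=1+2|\nu_M|^2$, replacing $G$ by $\tilde G=\sqrt{1+2G^2}$ and checking (a)--(b) for $\tilde G$, as you do, is cleaner than the paper's replacement by $\sqrt{2}\,G$, which does not literally dominate $|\nu|_E$ when $|\nu_M|$ is small.
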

\begin{proof} Notice that it is sufficient to prove that 
the hypothesis \eqref{e_primarybound} (or the equivalent one for graphs \eqref{e_compl}) holds. First, let us check that \eqref{e_primarybound} holds if 
\begin{equation}
\label{e_aux}
 |\nu|_E(x) \leq G(r_M(x)) \qquad   \forall x\in M.
\end{equation}
    As $g_E$ applied on vectors tangent to $\Sigma$ is non-smaller than $g_M$ applied on their projections in $M$, one has
    $r_M(x) \leq r_E(x)$ for all $x\in M$.
    As $G$ is non-decreasing now, if \eqref{e_aux} holds then 
$$|\nu|_E(x) \leq G(r_M(x))\leq G(r_E(x)), 
$$
   and \eqref{e_compl} holds.  
   Finally, notice that
   $$-1=\gt(\nu,\nu)=-\gt(\nu,\partial_t) +g_M(\nu_M,\nu_M), \qquad \hbox{thus} \qquad  
    |\nu|_E^2\leq 2 |\nu|^2.$$
   So, the bound \eqref{e_compl_M} implies  \eqref{e_aux} up to  changing $G$ by $\sqrt{2}G$, which is irrelevant because the latter satisfies the same properties required for the former. 
\end{proof}    

\begin{remark}\label{r_caca} Corollary \ref{c_complgraph} proves Theorem I in the Introduction, and the additional results below this theorem follow from the following considerations.
\begin{enumerate}    \item 
The distance $d$ on $\Sigma$ associated with $g$ satisfies  $d\leq d_M \leq d_E$.

Both $d$ and $d_E$ makes sense directly even if the soliton is not a graph. 

\item In  formula \eqref{e_aux}, one could use also $g$, its norm and  $r$  instead of $g_M$ its norm and $r_M$  in order to prove the completeness of $d$

However, such a substitution by a smaller metric yields a weaker result. 

\item What is more, in the previous case,  $g$ will be complete when $|\nu_M(p)|$ is bounded by  $G(r(p))$ for  a continuous function $G>0$ and {\em  no other hypothesis}.  

To prove this, otherwise, there would exist an incomplete unit minimizing geodesic $\gamma: [0,b)\in \Sigma$, where $b<\infty$, $\gamma(s)=(x(s),t(s), \gamma(0)=o$ and, along $\gamma$, $|\nu_M|$ is bounded by the maximum of $G([0,b])$. Then, both  $x'(s)$ and $t'(s)$ are bounded for $s\in [0,b)$ which permits the extensibility of $x(s)$ (by the completeness of $g_M$) and, then, of $\gamma$, through $b$.

\end{enumerate}
\end{remark}

\subsection{Application to  (codimension 1) solitons}\label{s3_2b}
Completeness will be an essential property to ensure Omori-Yau principle and, then, inexistence results. 
The natural hypotheses on $G$ stated in (a), (b) of Theorem~\ref{t_completeness} and in (c) of 
Corollary~\ref{c_complgraph} are related to other hypotheses  commonly used to ensure the Omori-Yau principle, specifically, those in \cite[formula (2.27)]{AMR} (which are the hypotheses (i)---(iii) below). Let us start clarifying that, even though the latter hypotheses seem a priori more restrictive than ours, they turn out equivalent.
\begin{proposition}\label{p_iii}
    Let $G_0:[0,\infty)\rightarrow \R $ be a continuous function satisfying: (a)~$G_0>0$, (b) $\int_0^\infty ds/G_0(s)=\infty$, (c) $G_0$ is nondecreasing. Then, there exists a $C^1$ function 
    $G:[0,\infty)\rightarrow \infty$ satisfying $G\geq G_0$ and:
    \begin{equation}
        \label{e_iii}
       (i) \; G(0)>0, \quad (ii) \; G'(s)\geq 0, \forall s\in [0,b), \quad (iii) \; \frac{1}{G}\not\in L^1([0,\infty)].
    \end{equation}
\end{proposition}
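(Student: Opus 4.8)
The plan is to obtain $G$ as a \emph{forward running average} of $G_0$, a smoothing tailored to preserve monotonicity, positivity and the divergence of $\int 1/G$ all at once. Concretely, I would fix any window width $\delta>0$ (taking $\delta=1$ is harmless) and set
\[
G(s):=\frac{1}{\delta}\int_s^{s+\delta}G_0(\tau)\,d\tau=\frac{1}{\delta}\int_0^\delta G_0(s+u)\,du .
\]
Writing $\mathcal{G}(s):=\int_0^s G_0(\tau)\,d\tau$, we have $G(s)=\tfrac1\delta\big(\mathcal{G}(s+\delta)-\mathcal{G}(s)\big)$; since $G_0$ is continuous, $\mathcal{G}$ is $C^1$ with $\mathcal{G}'=G_0$, so $G$ is $C^1$ and $G'(s)=\tfrac1\delta\big(G_0(s+\delta)-G_0(s)\big)$.

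The verification then rests on a two-sided sandwich coming solely from monotonicity of $G_0$: since $G_0(s)\le G_0(s+u)\le G_0(s+\delta)$ for $0\le u\le\delta$, averaging gives
\[
G_0(s)\le G(s)\le G_0(s+\delta)\qquad\text{for all }s\ge 0 .
\]
The left inequality is precisely $G\ge G_0$. Positivity of $G_0$ gives $G(0)=\tfrac1\delta\int_0^\delta G_0>0$, which is (i); and $G'(s)=\tfrac1\delta(G_0(s+\delta)-G_0(s))\ge 0$ because $G_0$ is nondecreasing, which is (ii). Both follow immediately from the construction, with no further estimates.

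The only substantive point — and the step I would watch most carefully — is that the divergence condition (iii) survive the smoothing, and this is exactly what the \emph{upper} half of the sandwich secures. From $G(s)\le G_0(s+\delta)$ one gets
\[
\int_0^\infty\frac{ds}{G(s)}\ge\int_0^\infty\frac{ds}{G_0(s+\delta)}=\int_\delta^\infty\frac{d\tau}{G_0(\tau)} .
\]
Now $\int_0^\infty d\tau/G_0=\infty$ by hypothesis (b), while the discarded head is finite, $\int_0^\delta d\tau/G_0\le \delta/G_0(0)<\infty$ (using $G_0\ge G_0(0)>0$ on $[0,\delta]$ by monotonicity and positivity); hence the tail $\int_\delta^\infty d\tau/G_0$ diverges, giving $1/G\notin L^1([0,\infty))$. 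This exhausts all the required properties, so the forward-average construction does the whole job, the subtlety being concentrated in keeping $G$ comparable to $G_0$ \emph{from above} so that the reciprocal integral cannot be rendered convergent by the smoothing.
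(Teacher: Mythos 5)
Your proof is correct: every step checks out, and the key point --- that the smoothing must stay bounded \emph{above} by a translate of $G_0$ so that $\int 1/G$ cannot be made convergent --- is exactly the mechanism the paper relies on as well. The difference is in the realization. The paper builds a polygonal function through the points $(k,G_0(k+1))$, verifies the sandwich $G_0\le P_0\le G_0(\cdot+\mathrm{shift})$ on integer intervals, and then has to smooth the corners by hand while controlling the result (it keeps the smoothed function below $2P_0$ to preserve divergence). Your forward running average $G(s)=\frac1\delta\int_s^{s+\delta}G_0$ collapses those two steps into one: it is automatically $C^1$ by the fundamental theorem of calculus, with $G'(s)=\frac1\delta\bigl(G_0(s+\delta)-G_0(s)\bigr)\ge 0$ read off directly, and the same two-sided sandwich $G_0(s)\le G(s)\le G_0(s+\delta)$ comes for free from monotonicity. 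So your route is cleaner and avoids the slightly informal ``smooth at each corner'' step; the paper's polygonal version makes the discrete translate-by-one comparison $\int_k^{k+1}dt/P_0\ge\int_{k+2}^{k+3}dt/G_0$ more visibly explicit, but buys nothing extra. One cosmetic remark: both arguments (yours and the paper's) use hypothesis (c) essentially, which is fine since the proposition assumes it.
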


Note: clearly \eqref{e_iii} implies (a), (b), (c), being (iii) just a rewriting of (b). 
    
\begin{proof} This is a consequence of the $C^0$ density of $C^1$ functions into the $C^0$ ones, anyway, a simple construction goes as follows.
    Consider the polygonal function  $P_0$ with segments connecting the points $(k,G_0(x_{k+1}))$, for integers $ k\geq 0$. Notice that  $P_0\geq G_0$ and it satisfies (ii) (including its lateral derivatives). It also satisfies  (iii), as $\int_k^{k+1}dt/P_0(t)\geq \int_{k+2}^{k+3}dt/G_0(t)$, thus, the required inequality holding by summing in $k$. 
    Easily, the polygonal can be  smoothed at each $k$ maintaining (ii). Choosing a smoothed function bounded by $2G$, (iii) (as well as the lower bound by $G_0$) also will hold.     
\end{proof}

\begin{remark}\label{r_trivial} The following trivial facts will be used when necessary.

    If some elements are bounded by some $G_0(r)$ as above then they are also bounded for $G(r)$; this can be used when ensuring completeness. 
    
    Moreover, next these elements will be contained in a complete manifold where the parameter $r$ measures distance to a point. So, to  bound them for $r$ greater than a constant $R_0>0$ will be equivalent too. That is, one can replace $0$ by $R_0$ in the hypotheses (a)--(c) as well as in (i)--(iii). Finally, if $G$ satisfies any of these properties, then so does $A \, G$ with $A>1$. 
\end{remark}

To obtain the announced application for solitons, notice first:

\begin{lemma} \label{l_H}
    Let $F: \Sigma^n \rightarrow \mt^{n+1}_{-1}$ be a 
    $c$-translating soliton in $(\mt, \gt)$. Then, 
    $$
    \frac{H^2}{c^2} \leq |\nu_E|^2 < 2\frac{H^2}{c^2} .$$
\end{lemma}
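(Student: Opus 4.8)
The plan is to relate the Euclidean norm $|\nu_E|^2$ of the future-pointing unit normal to the scalar mean curvature $H$ by exploiting the soliton structure and the orthogonal decomposition $\nu = \nu_M + \nu_{\R}\,\partial_t$. First I would use the normalization $\gt(\nu,\nu)=-1$ from the Lorentzian metric together with the soliton relation \eqref{soliton-scalar}, namely $H=\varepsilon c\,\gt(\partial_t,\nu)$ with $\varepsilon=-1$. Since $\gt(\partial_t,\partial_t)=-1$, the component of $\nu$ along $\partial_t$ is controlled by $H$: writing $\nu_{\R}=\gt(\nu,\partial_t)/\gt(\partial_t,\partial_t) = -\gt(\nu,\partial_t)$, the soliton equation gives $\nu_{\R}=H/c$ (up to sign bookkeeping), so that $\nu_{\R}^2 = H^2/c^2$.

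Next I would compute $|\nu_E|^2$ explicitly. By definition of $g_E=g_M+dt^2$, one has $|\nu_E|^2=|\nu_M|^2+\nu_{\R}^2$. The Lorentzian normalization reads $-1=\gt(\nu,\nu)=|\nu_M|^2-\nu_{\R}^2$, so $|\nu_M|^2=\nu_{\R}^2-1=H^2/c^2-1$. Substituting, I obtain
\begin{equation*}
|\nu_E|^2 = |\nu_M|^2+\nu_{\R}^2 = \left(\frac{H^2}{c^2}-1\right)+\frac{H^2}{c^2}=2\frac{H^2}{c^2}-1.
\end{equation*}
This identity immediately yields the two desired inequalities: since $1\geq 0$ we have $|\nu_E|^2=2H^2/c^2-1<2H^2/c^2$, giving the upper bound; and since the spacelike condition forces $|\nabla h|^2\geq 0$, equivalently $H^2/c^2\geq 1$ by \eqref{eq:funcion-h}, we get $2H^2/c^2-1\geq H^2/c^2$, i.e. $|\nu_E|^2\geq H^2/c^2$, giving the lower bound.

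I expect the only genuine subtlety to lie in the careful sign and normalization bookkeeping: one must verify that $\nu_{\R}^2$ equals exactly $H^2/c^2$ using the conventions $\varepsilon=-1$, the future-pointing choice of $\nu$, and Remark~\ref{r_signH} guaranteeing $H>0$. The strictness of the upper inequality is automatic from the summand $-1$, while the lower inequality hinges on the spacelike character of $\Sigma$, which forces $\nu$ to be genuinely timelike with $\nu_{\R}^2\geq 1$; this is precisely the content of $1+|\nabla^M u|^2$-type positivity and is already encoded in \eqref{eq:funcion-h}. There is no serious obstacle here — the result is a direct algebraic consequence of the two defining identities — so the main care is simply to present the substitution transparently.
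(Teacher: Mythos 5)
Your proposal is correct and follows essentially the same route as the paper: decompose $\nu=\nu_M+\nu_{\R}\,\partial_t$, use $\gt(\nu,\nu)=-1$ to get $|\nu_M|^2=\nu_{\R}^2-1$, identify $\nu_{\R}^2=H^2/c^2$ from the soliton equation, and conclude $|\nu_E|^2=2H^2/c^2-1$. The only cosmetic difference is that you justify $H^2/c^2\geq 1$ via $|\nabla h|^2\geq 0$ from \eqref{eq:funcion-h}, while the paper invokes the reverse Cauchy--Schwarz inequality $\gt(\nu,\partial_t)^2\geq 1$ for timelike vectors; these are the same fact.
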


\begin{proof} Putting 
$\nu=-\gt(\nu,\partial_t)\partial_t +\nu_M $ 
one has 
$g_M(\nu_M,\nu_M)=\gt(\nu,\partial_t)^2-1$ and
$$
|\nu_E|^2=\gE(\nu,\nu)=\gt(\nu,\partial_t)^2+g_M(\nu_M,\nu_M)= 2 \gt(\nu,\partial_t)^2-1.
$$
Now, apply that, for a soliton,  $\frac{H^2}{c^2}=\gt(\nu,\partial_t)^2$; for the lower bound, use $\gt(\nu,\partial_t)\geq 1$, which follows from the Lorentzian reverse triangle inequality for timelike vectors.   
\end{proof}
Summing up, finally:
\begin{corollary} \label{co:completeness}
    Let $\left(M, g_M\right)$ be a complete Riemannian manifold, consider the Lorentzian product $\mt=M \times \mathbb{R}$, $\gt=g_M-d t^2$, 
and let $\Sigma$ be a properly immersed translating soliton with mean curvature $H$. 

(1) Let $r_E$ and $r$ be the distances associated to $g_E$ and $g$  from a point $o\in \Sigma$. $(\Sigma,g)$ is complete if any of the following conditions hold
\begin{itemize}
    \item[(1a)] For a function $G$ satisfying (a) and (b) in Prop. \ref{p_iii}
$$
H \leq G\left(r_E(x)\right). \qquad \forall x \in \Sigma ,
$$
\item[(1b)] For any positive function $G$
$$
H \leq G\left(r(x)\right). \qquad \forall x \in \Sigma .
$$
\end{itemize}

(2) Let $r_M$ be the $g_M$-distance  from a point $o_M\in M$.

\begin{itemize}
    \item[(2a)] When $\Sigma$ is a graph, it is  complete  if   
\begin{equation*}\label{e_2a}
    H \leq G\left(r_M(x)\right) \qquad \forall x \in M
\end{equation*}
for  $G$ satisfying (a), (b) and (c) (or  (i)--(iii)) in Prop. \ref{p_iii}. 

\item[(2b)]
When $\Sigma$ is not a graph, it is complete if the graph of the soliton associated to its universal covering $\tilde M$ (according to \eqref{e_universal}) satisfies (2a).
\end{itemize}

\end{corollary}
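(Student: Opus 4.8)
The plan is to read Corollary~\ref{co:completeness} as a direct harvest of the completeness criteria already established, the only genuinely new step being the conversion of a bound on the mean curvature $H$ into a bound on the relevant normal quantity via Lemma~\ref{l_H}. Recall that for a $c$-translating soliton one has $\gt(\nu,\partial_t)^2=H^2/c^2$, so the computation in the proof of Lemma~\ref{l_H} yields both $|\nu|_E<\sqrt{2}\,H/c$ and $g_M(\nu_M,\nu_M)=H^2/c^2-1$, whence $|\nu_M|\le H/c$. These two elementary inequalities are what feed each of the four assertions, and throughout I would use that $c>0$ and (by Remark~\ref{r_signH}) $H>0$.

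For (1a) I would argue as follows. Since $\Sigma$ is a hypersurface, $|\nu^{min}|_E=|\nu|_E$ (Remark~\ref{r_complete}(2)), and combining $|\nu|_E<\sqrt{2}\,H/c$ with the hypothesis $H\le G(r_E)$ gives $|\nu|_E\le(\sqrt{2}/c)\,G(r_E)$. The rescaled function $\tilde G:=(\sqrt{2}/c)\,G$ still satisfies (a) and (b), since multiplication by a positive constant is harmless (cf.\ Remark~\ref{r_trivial}), so Theorem~\ref{t_completeness} applies verbatim and $(\Sigma,g)$ is complete. For (1b) I would instead exploit the sharper projection estimate: from $|\nu_M|\le H/c\le(1/c)\,G(r)$, and the fact that $(1/c)\,G$ is again a continuous positive function, Remark~\ref{r_caca}(3) yields completeness with \emph{no} further hypothesis on $G$, which is exactly what (1b) claims.

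For (2a), where $\Sigma$ is a graph, I would again pass from $H$ to $\nu_M$: the bound $H\le G(r_M)$ together with $|\nu_M|\le H/c$ gives $|\nu_M|\le(1/c)\,G(r_M)$, and $(1/c)\,G$ inherits properties (a), (b) and (c). Thus the hypothesis \eqref{e_compl_M} of Corollary~\ref{c_complgraph} holds and completeness follows. Finally, (2b) is obtained by chaining (2a) with Proposition~\ref{p_cover}(2): when $\Sigma$ is not a graph but is properly immersed, the projection onto $M$ is a covering map, and $\Sigma$ is complete if and only if the graph attached to its universal covering $\tilde\Sigma\hookrightarrow\tilde M\times\R$ is complete; since local isometries preserve both the soliton equation and the mean curvature, the lifted graph is a $c$-soliton to which (2a) applies under the stated hypothesis.

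The computations here are all routine; the only points requiring a little care are (i) checking that multiplication by the positive constants $1/c$ or $\sqrt{2}/c$ preserves conditions (a)--(c), which is recorded in Remark~\ref{r_trivial}, and (ii) the transfer of the soliton structure and of the curvature bound to the universal cover in (2b). This second point is the mildest of obstacles: it is already built into the formulation of (2b), where the hypothesis is imposed directly on the universal-covering graph, and combined with the completeness equivalence furnished by Proposition~\ref{p_cover}(2) it requires no genuinely new estimate.
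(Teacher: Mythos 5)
Your proposal is correct and follows essentially the same route as the paper: Lemma~\ref{l_H} converts the $H$-bound into a bound on $|\nu|_E$ (resp.\ $|\nu_M|$), after which (1a) is Theorem~\ref{t_completeness}, (1b) is Remark~\ref{r_caca}(3), (2a) is Corollary~\ref{c_complgraph}, and (2b) is Proposition~\ref{p_cover}(2). The only difference is that you spell out the harmless rescaling of $G$ by $\sqrt{2}/c$ or $1/c$, which the paper leaves implicit.
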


\begin{proof} Using Lemma \ref{l_H}, the bounds for $H$ provide the bounds for $|\nu_E|$ which permit to apply Theorem \ref{t_completeness} (for the part (1a)), Remark \ref{r_caca} (for the part (1b)) and Corollary \ref{c_complgraph}  (for the part (2a), while (2b) is straightforward using Proposition~\ref{p_cover}(b)). 
\end{proof}

\subsection{A relation between $G$-bounds in terms of  $r_M$ and  $r$}\label{s_extra}
 When the hypersurface $\Sigma$ is a graph, the  facts that $r \leq r_M$ and $G$ is nondecreasing yield 
\begin{equation}\label{e_extra0}
    H \leq (G\circ r) \Longrightarrow H \leq (G\circ r_M). 
    \end{equation}
Thus, the second bound, which may be easier to estimate from a practical point of view, is  insufficient when the first one is sharp. Next, given a function $G$ which may be suitable for a sharp estimate in terms of $r$, we will give a new function $G^M$ that provides the natural estimate in terms of $r_M$. This can be used to facilitate the applicability of the Omori-Yau principle to be proved in the next section. 

\begin{proposition}\label{p_extra}
            Let $\Sigma$ be the graph in $(M\times \R,\bar g=g_M-dt^2)$  of a smooth function $u$. If its  (future-directed) normal $\nu=(\nu_M,\nu_{\R})$ satisfies:
    \begin{equation}\label{e_extra1}
            \nu_{\R}(x)  \leq G(r(x)), \qquad \forall x\in M,
        \end{equation}
    for $G$ satisfying (a), (b), (c) in Prop. \ref{p_iii} then, with our natural identifications,
    \begin{equation}\label{e_extra1bis}
            g \geq g_G, \qquad \hbox{where} \quad g_G:= \frac{g_M}{G^2\circ r_M}.
        \end{equation}
    
    Moreover, if $g_M$ is complete 
    then $g_G$ is complete too.
    In this case, the radial distance $d_G$ from $o_M\in M$ associated to $g_G$ satisfies
\begin{equation} \label{e_extra2}         r_G(x)=\int_0^{r_M(x)}\frac{dr}{G(r)}, \end{equation} and $r_G\leq r\leq r_M$.
    
\end{proposition}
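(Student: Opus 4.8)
The plan is to exploit that $g_G=(G\circ r_M)^{-2}g_M$ is merely a radial conformal rescaling of $g_M$, together with the elementary geometry of a graph. Identifying $\Sigma=\mathrm{Graph}(u)$ with $(M,g)$ through the projection, a tangent vector $X\in TM$ corresponds to $(X,du(X)\partial_t)$, so that the induced metric is $g=g_M-du\otimes du$; in particular $g\le g_M$ pointwise, and the (future-directed) unit normal is $\nu=W^{-1}(\nabla^M u+\partial_t)$ with $W=\sqrt{1-|\nabla^M u|^2}$, whence $\nu_{\R}=1/W$ and $1-|\nabla^M u|^2=1/\nu_{\R}^2$. For the bound \eqref{e_extra1bis} I would estimate, for $X\in TM$, using Cauchy--Schwarz $du(X)^2=g_M(\nabla^M u,X)^2\le|\nabla^M u|^2|X|_M^2$,
\[
g(X,X)=|X|_M^2-du(X)^2\ \ge\ (1-|\nabla^M u|^2)|X|_M^2=\frac{|X|_M^2}{\nu_{\R}^2}.
\]
The hypothesis \eqref{e_extra1} gives $g(X,X)\ge|X|_M^2/G(r)^2$; since $g\le g_M$ forces $r\le r_M$ and $G$ is nondecreasing, $G(r)\le G(r_M)$, so $g(X,X)\ge|X|_M^2/G(r_M)^2=g_G(X,X)$, which is $g\ge g_G$.

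For the distance formula \eqref{e_extra2}, set $\psi(s)=\int_0^s dr/G(r)$. Along a minimizing unit-speed $g_M$-geodesic issuing from $o_M$ (which exists by Hopf--Rinow, $g_M$ being complete) one has $r_M=s$ and $g_G$-speed $1/G(s)$, so its $g_G$-length up to $x$ equals $\psi(r_M(x))$; hence $r_G(x)\le\psi(r_M(x))$. For the reverse inequality I would verify the eikonal identity $|\nabla^{g_G}\rho|_{g_G}=1$ for $\rho:=\psi\circ r_M$ wherever $r_M$ is smooth: indeed $|\nabla^{g_M}\rho|_{g_M}=\psi'(r_M)=1/G(r_M)$, and passing to $g_G=(G\circ r_M)^{-2}g_M$ multiplies this squared gradient-norm by $(G\circ r_M)^2$. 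Thus $\rho$ is $1$-Lipschitz for $d_{g_G}$, so $\rho(x)=\rho(x)-\rho(o_M)\le r_G(x)$, and therefore $r_G=\psi\circ r_M$.

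Completeness then follows at once. By condition (b), $\psi$ is a continuous strictly increasing bijection of $[0,\infty)$, so for every $R>0$ the closed $g_G$-ball satisfies $\bar B^{g_G}(o_M,R)=\{r_G\le R\}=\{r_M\le\psi^{-1}(R)\}=\bar B^{g_M}(o_M,\psi^{-1}(R))$, which is compact because $g_M$ is complete. As $d_{g_G}$ induces the manifold topology, every closed $d_{g_G}$-bounded set is a closed subset of such a compact ball, hence compact, and $(M,d_{g_G})$ is complete. Finally, the chain $g_G\le g\le g_M$ proved above yields the monotonicity of the corresponding radial distances, namely $r_G\le r\le r_M$.

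The main obstacle is the lower bound in \eqref{e_extra2}, i.e., that radial $g_M$-geodesics stay minimizing for the conformal metric $g_G$; everything else is bookkeeping with Cauchy--Schwarz and Hopf--Rinow. I would settle this through the eikonal identity $|\nabla^{g_G}\rho|_{g_G}=1$ above, the only delicate point being the regularity of $r_M$ across its cut locus (where $\rho$ fails to be smooth), which is handled by the same a.e.-differentiability argument used in the proof of Lemma \ref{l_compl}.
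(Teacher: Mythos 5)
Your proof is correct and follows essentially the same route as the paper: the same pointwise Cauchy--Schwarz estimate giving $g\ge (1-|\nabla^M u|^2)\,g_M\ge g_G$ (using $r\le r_M$ together with the monotonicity of $G$), and the same observation that $g_G$ is a radial conformal rescaling of $g_M$, which yields $r_G=\int_0^{r_M}dr/G$. The only difference is one of packaging: where the paper cites \cite[Prop. 3.1]{EFSZ} for the completeness of $g_G$ and simply asserts that unit radial $g_M$-geodesics remain minimizing $g_G$-pregeodesics, you derive both facts from the explicit distance formula (upper bound along radial geodesics, lower bound via the eikonal/Lipschitz estimate as in Lemma \ref{l_compl}, and then Hopf--Rinow using hypothesis (b)), which makes the argument self-contained.
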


\begin{proof}
    As $\nu=(\nabla^M u,1)/\sqrt{ 1-|\nabla^M u|^2(x)})$,  the bound \eqref{e_extra1} 
    on $\nu_\R$ yields
    \begin{equation}
        \label{extra3}
    1-|\nabla^M u|^2(x)\geq \frac{1}{G^2(r(x))}.
        \end{equation} 
    Thus, writing 
    any non-zero tangent vector at $(x,u(x))$ as $v=(v_M,v_{\R})$: 
$$\begin{array}{rl}
g(v,v)=     &  g_M(v_M,v_M)-du(v_M)^2=g_M(v_M,v_M)\left(1-g(\nabla^M u, \frac{v_M}{|v_M|})^2\right) \\
     \geq  &  g_M(v_M,v_M)\left(1-
     |\nabla^M u|^2(x)\right)\geq \frac{1}{G(r(x))^2} g_M(v_M,v_M)\\
     \geq  &  \frac{1}{G(r_M(x))^2} g_M(v_M,v_M)=g_G(v_M,v_M) \equiv g_G(v,v),
\end{array}
$$
the last two inequalities using, respectively, \eqref{extra3} and \eqref{e_extra0} (for the latter, put $H=G\circ r$). This proves \eqref{e_extra1bis} and the completeness of $g_G$ follows from \cite[Prop. 3.1]{EFSZ}.  

Notice that $g_G$ and $g_M$ are conformal with a radial factor and, then, any unit $g_M$ minimizing geodesic $\gamma$ from $o_M$ to $x$ will be a $g_G$ minimizing pregeodesic (i.e., a geodesic up to a reparametrization), with length given precisely by \eqref{e_extra2}. 
The last assertion follows because $g_G\leq g\leq g_M$.\footnote{\label{f_G} Even though the first inequality has been proven in \eqref{e_extra1bis}, it is worth pointing out that the inequality \eqref{e_extra1} assumes implicitly $G\geq 1$, as $-1=g(\nu,\nu)=g_M(\nu_M,\nu_M)-\nu_{\R}^2$. In the case of solitons,  $H=c \nu_\R $ and, then,  the bound of $H$ by $G$ is implicitly larger than $c$.}
\end{proof}

Now, a useful criterion can be stated.

\begin{corollary}
     For $G$,  $r$ and $r_M$ as in Prop. \ref{p_extra} with complete $g_M$, let
\begin{equation}\label{d_GM}
    G^M(s):= G\left(\int_0^s \frac{dr}{G(r)}\right), \qquad \forall s\geq 0.
\end{equation}
Then $G^M\leq G$, $G^M$ satisfies the conditions (a), (b), (c) in Prop. \ref{p_iii} and, moreover,
\begin{equation}\label{e_GM}
 G^M(r_M(x))  \leq G(r(x)) \qquad \qquad \forall x\in M.
\end{equation}
\end{corollary}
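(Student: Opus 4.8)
The plan is to deduce every assertion from the single auxiliary function $\varphi(s):=\int_0^s dr/G(r)$, so that $G^M=G\circ\varphi$ by \eqref{d_GM} and $r_G=\varphi\circ r_M$ by \eqref{e_extra2}. First I would record the elementary properties of $\varphi$: it is $C^1$, strictly increasing since $1/G>0$, with $\varphi(0)=0$ and $\varphi(s)\to\infty$ by hypothesis (b). Everything else is then a matter of reading off monotonicity and a sign.

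The key step is the normalization $G\geq 1$ holding in this setting, as noted in the footnote to Proposition~\ref{p_extra} (since $-1=g(\nu,\nu)=g_M(\nu_M,\nu_M)-\nu_{\R}^2$ gives $\nu_{\R}\geq 1$, the bound \eqref{e_extra1} forces $G\geq 1$). Consequently $1/G\leq 1$, whence $\varphi(s)\leq s$ for all $s\geq 0$; since $G$ is nondecreasing by (c), composing yields $G^M(s)=G(\varphi(s))\leq G(s)$, which is the claimed bound $G^M\leq G$. From this the three conditions follow at once: (a) because $G^M=G\circ\varphi>0$; (c) because $G^M$ is a composition of nondecreasing functions; and (b) directly from $G^M\leq G$, which gives $1/G^M\geq 1/G>0$ and hence $\int_0^\infty ds/G^M(s)\geq\int_0^\infty ds/G(s)=\infty$.

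Finally I would establish \eqref{e_GM} by combining $G^M=G\circ\varphi$ with $r_G=\varphi\circ r_M$ from \eqref{e_extra2}: this gives $G^M(r_M(x))=G(\varphi(r_M(x)))=G(r_G(x))$, and then the inequality $r_G\leq r$ from Proposition~\ref{p_extra}, together with the monotonicity of $G$, yields $G^M(r_M(x))=G(r_G(x))\leq G(r(x))$, as required. The whole argument is short; the one point to flag as essential rather than routine is the normalization $G\geq 1$, since it is the only place where the specific geometric setup (and not merely the abstract conditions (a)--(c)) enters, and without it the inequality $\varphi(s)\leq s$, and therefore $G^M\leq G$, can fail.
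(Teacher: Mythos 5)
Your proof is correct and follows essentially the same route as the paper: both arguments rest on the normalization $G\geq 1$ from the footnote to Proposition \ref{p_extra} (giving $\int_0^s dr/G(r)\leq s$ and hence $G^M\leq G$ by monotonicity), and both obtain \eqref{e_GM} from the identity $G^M(r_M(x))=G(r_G(x))$ together with $r_G\leq r$. The only difference is that you spell out the verification of (a)--(c), which the paper dismisses as ``clearly''; your explicit check of (b) via $1/G^M\geq 1/G$ is a useful addition.
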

\begin{proof} As $G\geq 1$ (see footnote \ref{f_G}) and non-decreasing, $G^M\leq G$, and clearly (a)--(c) hold. Then, using \eqref{e_extra2},  $r_G\leq r$ and, again,   $G$ is non-decreasing: $G^M(r_M(x))=G(r_G(x)\leq G(r(x))$.
\end{proof}

\begin{remark}[Criterion for $r_M$ bounds]\label{r_GM} We will use the previous corollary as a criterion to bound  any  function $H\geq 0$ on $M$, namely:
$$
H(x) \leq G^M(r_M(x)) \quad \Longrightarrow \quad H(x) \leq G(r(x)). 
$$
In particular, choosing $G$ affine, say,  $G(r)=AB r + B$ with $A,B>0$,  a logarithm growth is achieved $G^M(s)=\ln (Ar+1) +B$, and others can be obtained by using  functions as in Remark \ref{r_BE}.
    \end{remark}

\section{{Omori-Yau maximum principle for $\Delta_{ch}$ with unbounded $H$} 
}\label{s4}

    Given a Riemannian manifold {$(\Sigma,g)$}  
and a smooth function $$ h: M \rightarrow \R,$$  the {\em drift Laplacian associated to $h$} is the operator 
$\Delta_h: C^{2}(M) \rightarrow C^0(M),$ given as
\begin{equation} \label{def:drift}
\Delta_hu:= \Delta u - g(\nabla h,\nabla u).
\end{equation}
 Analogously, if $\Ric \equiv \Ric_\Sigma$ is the Ricci tensor of $\Sigma$, the {\em Bakry-Émery} Ricci tensor is \begin{equation}
    \label{def:rich} \Ric_{h} := \Ric_\Sigma +  \nabla^2 h.
\end{equation}
\begin{definition} $(M,g)$ satisfies the {\em Omori-Yau maximum principle for $\Delta_h$} if for any $u \in C^2 (M)$ such that $u^*=\sup_M u < \infty$, then there exists  an {\em Omori-Yau sequence} $\{ x_k \}_{k \in \n}$, defined by the properties:
\begin{eqnarray}
    u(x_k) & \to & u^* , \nonumber \\
    |\nabla u(x_k)| & \to & 0, \label{eq:omori-yau} \\
    \liminf \Delta_h(u)(x_k)& \leq  &0 \nonumber 
\end{eqnarray}
\end{definition}
Next, our aim it to prove the following theorem, that  implies Theorem II { (taking into account the criterion for $r_M$ bounds  and other details  in Remarks \ref{r_GM}, }\ref{r_trivial})  as well as  other assertions in the introduction.
\begin{theorem}  \label{th:A} Let $(M,g_M)$ be a complete Riemannian manifold with sectional curvature bounded from below,   Sec$_M\geq -\kappa,$ with $\kappa\geq 0$. Consider the product manifold $\mt=M \times \R$ equipped with the metric $\gt= g_M-dt^2$. 

Let $F: \Sigma \to \mt$ be a  spacelike $c$-translating soliton of the mean curvature flow (not necessarily graphical). 
The Omori-Yau principle holds for the drift Laplacian $\Delta_{c \, h}$ on $(\Sigma,g)$ (where $g=F^\ast (\gt)$ and $h$ is the height function in Definition \ref{def:h}) if    the mean curvature satisfies
\begin{equation}  \label{eq:desH} H(x) \leq G(r(x)) \; , \forall x \in \Sigma \; , \end{equation}
where $G \in C^1([0,+\infty[)$ such that 

(i) $G(0)>0$, 
(ii) $G'(t) \geq 0$,
(iii) $\frac{1}{G(t)} \notin L^1(]0,+\infty[)$.\newline
Moreover, when $\kappa=0$, the principle holds if \eqref{eq:desH} is satisfied for a continuous function $G$, that is, whenever   $H$ is bounded in terms of the $d$-distance. 
\end{theorem}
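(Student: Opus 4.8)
The plan is to verify the abstract Omori-Yau criterion for the drift Laplacian $\Delta_{ch}$ recorded in \cite[Th. 2.5]{AMR}, whose hypotheses are: completeness of $(\Sigma,g)$, a lower bound on the Bakry-\'Emery Ricci tensor $\Ric_{ch}$ of the form $\Ric_{ch} \geq -C^2\, \varpi^2(r)$ for a suitable function $\varpi$ compatible with $G$, and the usual Omori-Yau growth conditions (i)--(iii) tying $\varpi$ (hence $G$) to the distance $r$. The crux is to translate the soliton geometry into these two ingredients. First I would use Corollary \ref{co:completeness}(1b) (equivalently, Remark \ref{r_caca}) to dispatch completeness: since $H \leq G(r)$ bounds $H$ in terms of the intrinsic distance $r$, and by Lemma \ref{l_H} this controls $|\nu|_E$ in terms of $r$, the metric $g$ is complete. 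Note that for the completeness step alone no monotonicity of $G$ is needed, which is why the case $\kappa=0$ can work with a merely continuous $G$.

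\textbf{The Ricci lower bound.} The main work is estimating $\Ric_{ch} = \Ric_\Sigma + \nabla^2(ch)$ from below. For $\nabla^2(ch)$ I would invoke Lemma \ref{lem:alias}, formula \eqref{lem:h}, which gives $\nabla^2(ch)(X,X) = \varepsilon H\, g(AX,X) = -H\, g(AX,X)$ (since $\varepsilon=-1$); bounding this requires controlling $H\,|A|$. For $\Ric_\Sigma$ I would use the Gauss equation to express the intrinsic Ricci curvature of the spacelike hypersurface in terms of $\Ric_M$ along $\Sigma$ (controlled below by $\Sec_M \geq -\kappa$, weighted by the tilt factor $|\nu_M|^2 = H^2/c^2$) and of quadratic terms in the second fundamental form $A$. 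Schematically, both contributions are governed by $H^2$ and by $|A|^2$, so the bound $H \leq G(r)$ and the soliton relation $H = c\,\gt(\partial_t,\nu)$ convert the $H$-dependent pieces into $G(r)$-controlled quantities. The delicate point is the appearance of the full second fundamental form $|A|^2$, which is \emph{not} a priori bounded by $H$ alone on a hypersurface of dimension $n>1$; I expect the correct move is to feed the test function $u$ into $\Delta_{ch}$ so that the problematic $|A|^2\,H$ term combines favorably with the $g(\nabla H,\nabla(ch))$ term coming from \eqref{f_deltaH}, or else to apply the principle not directly to $r$ but to a comparison function built from $G$ that absorbs the indefinite curvature terms.

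\textbf{Matching the growth conditions and the $\kappa=0$ simplification.} Once the Bakry-\'Emery Ricci is bounded below by $-$(constant)$\cdot G(r)^2$ or a comparable expression, conditions (i)--(iii) on $G$ are exactly \cite[(2.27)]{AMR} (see Proposition \ref{p_iii}, which shows these are equivalent to (a)--(c)), so the abstract principle applies and produces the Omori-Yau sequence $\{x_k\}$ satisfying \eqref{eq:omori-yau}. For the final claim, when $\kappa=0$ the ambient curvature term in the Gauss-equation estimate has a favorable sign, so that the lower bound on $\Ric_{ch}$ no longer needs the growth function $G$ to moderate a negative $\kappa$-contribution; this is the content of Remark \ref{re:k=0}, and it is what permits dropping (ii) and (iii) and requiring only that $G$ be continuous (i.e., $H$ merely bounded in terms of $r$ along $\Sigma$). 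I anticipate the main obstacle throughout is precisely the control of $|A|^2$: reconciling the indefinite second-fundamental-form terms with a one-sided bound coming only from $H$ will require either the soliton structure equation for $A$ (cf. the suppressed Proposition following Definition \ref{def:h}) or a clever choice of the auxiliary function to which Omori-Yau is applied.
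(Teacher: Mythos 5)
Your overall architecture (completeness via Corollary \ref{co:completeness}(1b), a lower bound on $\Ric_{ch}$, then the Omori--Yau machinery with the growth conditions (i)--(iii), and the sign observation for $\kappa=0$) matches the paper's. But there is a genuine gap at the central step, the Bakry--\'Emery Ricci estimate. You flag $|A|^2$ as ``the main obstacle'' and propose to neutralize it either by combining the $|A|^2H$ term of \eqref{f_deltaH} with $g(\nabla H,\nabla(ch))$, or by a clever choice of test function. Neither is needed, and the first would not work: formula \eqref{f_deltaH} plays no role here (it is used only in the non-existence argument of Proposition \ref{th:nonexistence-1}). The actual mechanism, in Lemma \ref{lema_driftRicci}, is that for a \emph{spacelike} hypersurface in a \emph{Lorentzian} ambient the Gauss equation gives
$\Ric_\Sigma(X,X)=\ricct(X,X)+\rt(X,\nu,X,\nu)+H\,g(AX,X)+|AX|^2$,
so $|AX|^2$ enters with a \emph{favorable} sign (opposite to the Riemannian case), and the only indefinite term $H\,g(AX,X)$ cancels \emph{exactly} against $\nabla^2(ch)(X,X)=-H\,g(AX,X)$ from \eqref{lem:h} when one forms $\Ric_{ch}=\Ric_\Sigma+\nabla^2(ch)$. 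This exact cancellation is the whole reason the drift $ch$ is the right one; what survives is controlled solely by $\Sec_M\geq-\kappa$ and the tilt $|\nu_M|^2=H^2/c^2-1\leq G^2(r)/c^2$, yielding $\Ric_{ch}\geq -(n\kappa/c^2)G^2(r)\,|X|^2$. Without identifying this cancellation your estimate does not close.

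A second, smaller omission: you cannot simply cite the abstract criterion for the ordinary Laplacian once $\Ric_{ch}$ is bounded below. The paper must prove a drift-Laplacian comparison (Lemma \ref{lema_driftLaplacian}), $\Delta_{ch}r\leq C\,G(r)$ weakly outside a ball, via Bochner, a Riccati inequality, and an ODE comparison with $w''=\bar G^2 w$; the new ingredient relative to the classical case is an integration by parts of the drift term along radial geodesics, which requires the bound $|\nabla(ch)|^2=H^2-c^2\leq G^2(r)$ --- again supplied by the hypothesis on $H$ and the soliton relation \eqref{eq:funcion-h}. The remainder of the proof (the exhaustion $\Omega_T$, the auxiliary function $\vartheta_\varsigma$ built from $\int_0^r ds/\bar G(s)$, and Calabi's trick at the cut locus) is the standard scheme you anticipate, and your treatment of the $\kappa=0$ case via $\Ric_{ch}\geq|AX|^2\geq0$ and Chen--Qiu is correct.
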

For the proof, first the following lemmas yield lower bounds for the { Bakry-Émery} Ricci and drift Laplacian associated to $c\,h$ in terms of $G$.

\begin{lemma}\label{lema_driftRicci} Under the assumptions of Theorem \ref{th:A},
\begin{equation} \label{eq:vip-2}
    \Ric_{c \, h} (X,X) \geq - \, \frac{n\kappa}{c^2}  G^2 |X|^2.
\end{equation}    
\end{lemma}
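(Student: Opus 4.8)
The plan is to bound the Bakry-Émery Ricci tensor $\Ric_{c\,h} = \Ric_\Sigma + \nabla^2(c\,h)$ from below by estimating its two summands separately, then combining them with the mean-curvature bound \eqref{eq:desH}. The key ingredient is the Gauss equation, which relates the intrinsic curvature of the spacelike hypersurface $\Sigma$ to the ambient curvature $\rt$ and the second fundamental form. Since the ambient metric $\gt=g_M-dt^2$ is a product with a flat $\R$-factor, the ambient sectional curvatures are controlled by those of $(M,g_M)$, and the hypothesis $\Sec_M\geq -\kappa$ should give a lower bound on the relevant ambient curvature terms. The Hessian term $\nabla^2(c\,h)$ is exactly computed by Lemma \ref{lem:alias}, namely $\nabla^2(c\,h)(X,X)=\varepsilon H\,g(AX,X)$, which for $\varepsilon=-1$ reads $-H\,g(AX,X)$.

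First I would write out the Gauss equation for a spacelike hypersurface. For a unit vector $X$ tangent to $\Sigma$, tracing the Gauss equation over an orthonormal frame gives
\begin{equation*}
\Ric_\Sigma(X,X) = \overline{\Ric}\text{-terms} + \varepsilon\big(H\,g(AX,X) - g(AX,AX)\big),
\end{equation*}
where the ambient curvature contribution is nonnegative-controlled by $\Sec_M\geq-\kappa$. The crucial cancellation is that the Hessian term from Lemma \ref{lem:alias} contributes $\varepsilon H\,g(AX,X)$ with the opposite sign to the analogous Gauss term, so that in $\Ric_{c\,h}(X,X)=\Ric_\Sigma(X,X)+\nabla^2(c\,h)(X,X)$ the two $H\,g(AX,X)$ pieces annihilate. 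What survives from the second fundamental form is $-\varepsilon\,g(AX,AX)=g(AX,AX)\geq 0$ (using $\varepsilon=-1$), which is favorable, and a curvature term bounded below by a multiple of $-\kappa$ times the squared length of the projection of $X$ onto $M$.

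Next I would quantify the ambient curvature term. The point is that the projection onto $M$ of any $\gt$-unit tangent vector has $g_M$-norm controlled by $|\nu_\R|^2 = H^2/c^2$ (via the identity $\nu=\nu_M+\tfrac{H}{c}\partial_t$ and $|\nu_M|^2 = H^2/c^2 - 1$ from Lemma \ref{l_H}'s proof), so the factor measuring how far $\Sigma$ tilts from horizontal is exactly governed by $H^2/c^2$. Feeding in the bound \eqref{eq:desH}, $H\leq G(r)$, and $\Sec_M\geq-\kappa$, the curvature contribution is bounded below by roughly $-\tfrac{\kappa}{c^2}G^2$ per tangent direction; summing the $n$ directions that can participate yields the factor $n\kappa/c^2$. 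Dropping the nonnegative $g(AX,AX)$ term then gives precisely
\begin{equation*}
\Ric_{c\,h}(X,X)\geq -\frac{n\kappa}{c^2}\,G^2\,|X|^2.
\end{equation*}

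The main obstacle I expect is the bookkeeping in the Gauss-equation trace: carefully tracking the signs induced by $\varepsilon=-1$, confirming that the ambient Riemann tensor of $g_M-dt^2$ inherits the lower sectional-curvature bound of $(M,g_M)$ (the $dt^2$ factor being flat and $\partial_t$ parallel should make mixed curvatures vanish), and verifying that the $H\,g(AX,X)$ terms truly cancel against Lemma \ref{lem:alias} rather than merely partially combining. The estimate of the surviving ambient term by $H^2/c^2\le G^2/c^2$ requires that the relevant plane spanned by $X$ and $\nu$ project correctly into $M$, which is where the bound on $|\nu_M|$ is used; I would be careful that the constant is $n\kappa/c^2$ and not off by a factor coming from summing over the frame.
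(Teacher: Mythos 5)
Your proposal follows essentially the same route as the paper: trace the Gauss equation, observe that the $H\,g(AX,X)$ term cancels exactly against $\nabla^2(c\,h)(X,X)=\varepsilon H\,g(AX,X)$ from Lemma \ref{lem:alias}, keep the nonnegative $|AX|^2$, and bound the surviving ambient curvature term via $\Sec_M\geq-\kappa$ together with $|X_M|^2\leq |X|^2H^2/c^2$ and $|\nu_M|^2=H^2/c^2-1$, finally inserting $H\leq G(r)$. The only slip is the sign in your schematic trace formula: with the paper's conventions the second-fundamental-form contribution is $-\operatorname{trace}(A)\,g(AX,X)+|AX|^2=-\varepsilon H\,g(AX,X)+|AX|^2$, i.e.\ $+H\,g(AX,X)+|AX|^2$ for $\varepsilon=-1$, which is precisely what makes the cancellation you assert actually occur.
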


\begin{proof}  
    Let us consider $X, Y, Z, W \in \mathfrak{X}(\Sigma)$. The Gauss equation for the immersion $F: \Sigma \to \mt$ and shape operator $A$ is given by
\begin{equation}
 R(X,Y,Z,W)= \rt(X,Y,Z,W) {-} \gt(A X,Z) \gt(A Y,W) {+}\gt(A X,W) \gt(A Y,Z) \;,
\end{equation}
using our previous conventions (see Remark \ref{r_Codazzi}). 

Let $\{E_1, \ldots,E_n\}$ be an orthonormal frame of $\mathfrak{X}(M)$ and $\nu$ an unit normal vector field in the same cone as $\partial_t$. Then, taking the trace in the Gauss equation and \eqref{eq:traceA}:
\begin{multline}
    \Ric_\Sigma(X,X) =\ricct(X,X) + \rt(X,\nu,X, \nu) 
- \operatorname{trace}(A) \, g(AX,X) + |AX|^2  \\ =
\ricct(X,X) + \rt(X,\nu,X, \nu)
+ H \, g(AX,X) {+} |AX|^2 \label{eq:534}
\end{multline}
(recall \eqref{eq:traceA}). Let us write  \begin{eqnarray}
    X & = & X_M - \gt(X, \partial_t)\partial_t, \label{eq:XX} \\
    \nu & = & \nu_M - \gt(\nu,\partial_t) \partial_t = \nu_M + (\varepsilon H/c) \partial_t, \label{eq:NN}\end{eqnarray}
the latter from the equation of the translator \eqref{soliton-scalar}.
Using
this in $\rt(X,\nu,X,\nu)$ one obtains sixteen terms, all vanishing but  the one not applied to  $\partial_t$ 
(due to the product structure, 
see for instance \cite[Ch. 7]{Oneill}), 
thus
\begin{multline} \label{eq:536}
    \rt(X, \nu,X,\nu)= \rt \left(X_M,\nu_M,X_M,\nu_M\right)= \\
    \Sec_M\left(X_M,\nu_M\right) \left[ g_M(X_M,X_M) \, g_M(\nu_M,\nu_M)- g_M(X_M,\nu_M)^2\right].
\end{multline}

Next,   apply the bound   $\Sec_M \geq - \kappa$, $(\kappa \ge 0) $  in \eqref{eq:536} plug it in \eqref{eq:534} and use also 
$$\Ric_M \geq - (n-1) \, \kappa \, g_M.$$
    to deduce 
\begin{multline}
\label{eq:537}
\Ric_\Sigma(X,X) \geq - (n-1) \, \kappa \, |X_M|^2  +H \, g(A X,X)  +  |A X|^2 \\
- \kappa \left[ |X_M|^2|\nu_M|^2- g_M (X_M,\nu_M)^2
\right]
\end{multline}
Using here the definition of $\Ric_{c \, h}$ (formula \eqref{def:rich}) and \eqref{lem:h}, one has:

\begin{multline} \label{eq:538}
\Ric_{c \, h} (X,X) \geq - (n-1) \, \kappa \, |X_M|^2+|A X|^2 \\
- \kappa \left[ |X_M|^2|\nu_M|^2- g_M (X_M,\nu_M)^2
\right].
\end{multline}
Taking into account \eqref{eq:XX} and \eqref{eq:NN}, and using \eqref{soliton-scalar} and \eqref{eq:funcion-h}, one has:
\begin{equation} \label{eq:NN-1}
    |X_M|^2|\nu_M|^2= \left(\frac{H^2}{c^2}-1 \right) \left(|X|^2+\frac 1{c^2} g(X, \nabla(c h)) \right), \quad 
\end{equation}
and 
\begin{multline} \label{eq:XX-1}
    \gt(X_M,\nu_M) = \gt (X+\gt(X,\partial_t) \partial_t, \nu+\gt(\nu, \partial_t) \partial_t) =   \\
    =\frac{\varepsilon H}{c} \gt(X,\partial t) =\frac{H}{c^2} g(X, \nabla (c h)). 
\end{multline}
Using \eqref{eq:NN-1} and \eqref{eq:XX-1} in \eqref{eq:538}, one has:
\begin{multline} \label{eq:538-b}
\Ric_{c \, h} (X,X) \geq - (n-1) \, \kappa \, |X_M|^2+|A X|^2 \\
- \kappa \left[ \left( \frac{H}{c}\right)^2|X|^2-|X|^2-\frac{H}{c^2}   g (X,c\nabla h)^2
\right].
\end{multline}
Now, let us bound $|X_M|$ using \eqref{eq:funcion-h}:
$$ |X_M|^2-|X|^2=g (X, \nabla h)^2 \leq |X|^2 \left(\frac{H^2}{c^2}-1 \right),$$
in other words, we have 
 \begin{equation}\label{eq:X_M}
     |X_M|^2 \leq |X|^2 \frac{H^2}{c^2}.
 \end{equation}
Using  
\eqref{eq:X_M} in \eqref{eq:538-b}, we get 
\begin{multline} \label{eq:vip}
    \Ric_{c \, h} (X,X) \geq - (n-1) \, \kappa \, \left(\frac{H}{c} \right)^2|X|^2+|A X|^2\\
- \kappa \left( \frac{H}{c}\right)^2|X|^2+ \kappa |X|^2+ \kappa \frac{H}{c^2}   g (X,c\nabla h)^2 \geq\\
    -n \, \kappa \left( \frac Hc\right)^2 |X|^2.
\end{multline}
So, \eqref{eq:vip-2} follows using the $G$-bound \eqref{eq:desH} in the hypotheses of our theorem.
\end{proof}
\begin{remark}[The case $\kappa=0$ and completeness] \label{re:k=0}
     If $\kappa=0$, from \eqref{eq:538-b} we immediately deduce that
    $\Ric_{c \, h}(X,X) \geq |AX|^2 \geq 0.$ Then, whenever $\Sigma$ is complete, $\Delta_{c h}$ satisfies the Omori-Yau maximum principle over $\Sigma$; indeed, general results by Chen and Qiu (Theorem~1 and Remark 1 in \cite{CQ}) are applicable. 
    
    Corollary \ref{co:completeness}  ensures the completeness of $\Sigma$ in Theorem \ref{th:A}. In particular, when $\kappa=0$, the item 
    (1b) of this corollary (coming from Remark \ref{r_caca}(3)) implies completeness whenever the bound \eqref{eq:desH} holds for a (necessarily non-negative) continuous function $G$, even dropping the hypotheses (ii) and (iii) therein. 
\end{remark}
Taking the previous remark into account, we will assume from now on that $$\kappa>0.$$
 The next steps are inspired in   \cite[Theorem 8.1]{AMR}.
\begin{lemma}  \label{lema_driftLaplacian}  Under  Theorem \ref{th:A}, there exists a constant $C>0$ such that
$$
\Delta_{c h} r(x) \leq  C \, G(r(x)).
$$
weakly on all $\Sigma$ minus the ball $\bar{B}(o,2)$,  and smoothly outside the cut locus $\operatorname{cut}(o)$.
\end{lemma}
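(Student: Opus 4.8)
The plan is to run the whole comparison-geometry scheme of \cite[Theorem 8.1]{AMR} entirely with the \emph{weighted} (Bakry-Émery) quantities. This is essential: the unweighted Ricci tensor of $\Sigma$ only admits a lower bound of order $-G^4$ (the terms $-\kappa|X_M|^2|\nu_M|^2$ and $H\,g(AX,X)$ in \eqref{eq:537} produce $H^4$- and $H^2$-growth), whereas Lemma \ref{lema_driftRicci} provides the sharp $-G^2$ control, which is exactly what is needed to reach a bound of order $G$. First I would record that $\Sigma$ is complete by Corollary \ref{co:completeness}, so $r(\cdot)=\operatorname{dist}_\Sigma(o,\cdot)$ is well defined; by the standard barrier argument of Calabi it then suffices to prove the inequality smoothly along unit minimal geodesics issuing from $o$ and outside $\cut(o)$, the weak statement on $\Sigma\setminus\bar B(o,2)$ following automatically. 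I would also note, from \eqref{eq:funcion-h}, that $|\nabla(c\,h)|^2=c^2|\nabla h|^2=H^2-c^2\le G(r)^2$, so that $\Delta r$ and $\Delta_{c\,h}r=\Delta r-g(\nabla(c\,h),\nabla r)$ differ by at most $G(r)$; this single gradient bound is precisely what makes the $\infty$-Bakry-Émery control effective here.

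Next I would derive a Riccati differential inequality for $\psi:=\Delta_{c\,h}r$ along such a geodesic. Applying the weighted Bochner formula to $u=r$, using $|\nabla r|\equiv 1$ (so the left-hand side vanishes) and $g(\nabla r,\nabla\Delta_{c\,h}r)=\psi'$, gives $\psi'=-|\nabla^2 r|^2-\Ric_{c\,h}(\nabla r,\nabla r)$. I would then bound $|\nabla^2 r|^2\ge (\Delta r)^2/(n-1)$ (the Hessian of $r$ is radially degenerate with trace $\Delta r$), write $\Delta r=\psi+g(\nabla(c\,h),\nabla r)\ge \psi-G$, and insert Lemma \ref{lema_driftRicci}. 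In the range $\psi\ge G$ this yields
\[
\psi'\ \le\ -\frac{(\psi-G)^2}{n-1}+\frac{n\kappa}{c^2}\,G^2,
\]
a Riccati inequality whose right-hand side is strictly negative as soon as $\psi$ is sufficiently large compared with $G$.

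Finally I would close the estimate by a barrier/comparison argument. Fix $C>1$ with $(C-1)^2>\tfrac{n(n-1)\kappa}{c^2}$; then on the set $\{\psi>CG\}$ one has $\psi-G>(C-1)G\ge 0$, so the displayed inequality forces $\psi'<0$, while $CG$ is nondecreasing by hypothesis (ii). Since $\psi(r)\sim (n-1)/r\to+\infty$ as $r\to 0^+$, a first-crossing argument (once the strictly decreasing $\psi$ meets the nondecreasing barrier $CG$, their difference keeps decreasing and can never become positive again) yields $\psi\le CG$ beyond a uniform radius. Comparing $\psi$ with the geodesic-independent solution $\bar\psi$ of the model Riccati equation $\bar\psi'=-\frac{(\bar\psi-G)^2}{n-1}+\frac{n\kappa}{c^2}G^2$ that blows up at the origin (legitimate since the right-hand side is nonincreasing in $\psi$ for $\psi\ge G$), and estimating $\bar\psi\le C'G$ for $r\ge 2$ using (i) and (iii), shows this radius may be taken equal to $2$. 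This proves $\Delta_{c\,h}r\le C'G(r)$ off $\cut(o)$ for $r\ge 2$, and Calabi's trick upgrades it to the weak inequality on $\Sigma\setminus\bar B(o,2)$.

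The hard part is this last step: reconciling the blow-up of $\Delta_{c\,h}r$ at $o$ with the variable coefficient $G$, so that both the constant $C'$ and the starting radius $2$ are uniform over all directions, together with the passage across $\cut(o)$ in the weak sense. This is exactly the delicate point handled in \cite[Theorem 8.1]{AMR}, adapted here to the drift setting through the gradient bound $|\nabla(c\,h)|\le G$ established in the first paragraph.
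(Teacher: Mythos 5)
Your proposal is correct in substance but follows a genuinely different route from the paper's. You run the whole comparison with the drift quantities from the start: the weighted Bochner formula for $\psi=\Delta_{ch}r$, Lemma \ref{lema_driftRicci}, the shifted Cauchy--Schwarz bound $(\Delta r)^2\ge(\psi-G)^2$ (valid where $\psi\ge G$, using $|\nabla(ch)|\le G$), and then a nonlinear Riccati/barrier comparison against $CG$ and a singular model solution. The paper instead keeps the \emph{unweighted} $\varphi=\Delta r$, reads Lemma \ref{lema_driftRicci} as $-\Ric_\Sigma\le\frac{n\kappa}{c^2}G^2+\nabla^2(ch)$, \emph{linearizes} the Riccati inequality through the substitution $w^2\varphi$ with $w''=\bar G^2w$, $w(0)=0$, $w'(0)=1$ (so the singular behaviour at $o$ reduces to the clean boundary condition $\lim_{t\to0}w^2\Delta r=0$), removes the drift term by two integrations by parts using $|\nabla(ch)|\le\tfrac{c}{\sqrt{\kappa}}\bar G$ and $\bar G'\ge 0$, and only then bounds $w'/w\le f'/f\le\lambda(2)\bar G(t)$ for $t\ge2$ via the explicit supersolution $f=\bar G(0)^{-1}(\exp\{\int_0^t\bar G\}-1)$. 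Your version is conceptually cleaner (the Hessian of $ch$ never reappears, and your opening observation about why the Bakry--Émery tensor, not $\Ric_\Sigma$, must be used is exactly right), but the two points you defer to \cite[Theorem 8.1]{AMR} are precisely where the paper's linearization pays off: (i) the first-crossing argument by itself gives \emph{no} uniform radius, since $\psi\to+\infty$ at $o$ and a decrease rate merely bounded below by a constant could keep $\psi$ above $CG$ for arbitrarily long --- one must exploit the quadratic term (e.g. $\psi'\le-\psi^2/(8(n-1))$ while $\psi\gg G$, whence $\psi(t)\le 8(n-1)/t$) or the model comparison you invoke; and (ii) comparing $\psi$ with a model solution that also blows up like $(n-1)/t+O(1)$ requires an Eschenburg--Heintze-type limiting argument at $t=0^+$, which the $w^2$-multiplication replaces by an honest initial condition. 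These steps are standard but would have to be written out for a self-contained proof. A minor point: hypothesis (iii), $1/G\notin L^1$, is not used in this lemma by either argument; it enters only later, to make $\vartheta$ diverge.
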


\begin{proof}
From Lemma \ref{lema_driftRicci}, for any  $x \in \Sigma$ and  $v \in T_x\Sigma, $ one has:
\begin{equation} \label{eq:vip-3} 
    \Ric_\Sigma  (v,v) \geq -\frac{n\kappa}{c^2} \,  G^2(r(x)) |v|^2 -\nabla^2(c \, h)(v,v), 
     \end{equation}

\smallskip
     
\noindent {\em Step 1. Ricatti inequality for $\Delta \varphi$ on radial geodesics}. Applying Bochner formula to the distance function $r$ to the chosen point $o$ in $\Sigma\setminus (\{o\} \cup \cut(o))$,                                           
\begin{equation} \label{eq:bochner}
    0= | \nabla^2r|^2+\Ric_\Sigma(\nabla r,\nabla r) +g(\nabla \Delta r, \nabla r).
\end{equation}
Fix $x \in \Sigma\setminus (\{o\} \cup \cut(o))$ and consider a minimizing geodesic $$\gamma:[0,l] \rightarrow \Sigma, \quad l={\rm length}(\gamma),$$
such that $\gamma(0)=o$ and $\gamma(l)=x.$ Let us define $$\varphi(t):=(\Delta r \circ \gamma)(t), \quad t\in [0,l].$$
Then, using \eqref{eq:bochner}, we deduce 
\begin{equation} \label{eq:bochner-2}
    | \nabla^2r(\gamma(t))|^2+\Ric_\Sigma(\gamma'(t),\gamma'(t)) +\varphi'(t)=0.
\end{equation}
Taking into account that  $(\Delta r)^2\leq n |\nabla^2 r|^2,$ (indeed, the constant $n-1$ 
as in \eqref{e_radial_laplacian} also works,  but this is not  relevant here), we use \eqref{eq:bochner-2} and \eqref{eq:vip-3} to get the following Riccati inequality:
\begin{equation}
    \label{eq:bochner-3}
    \varphi'(t)+\frac 1n \varphi^2(t) \leq -\Ric_\Sigma(\gamma'(t),\gamma'(t)) \leq \frac{n\kappa}{c^2} \, G^2(t)+\nabla^2(c \, h) (\gamma'(t),\gamma'(t)),
\end{equation}
where $G(t)=G(r(\gamma(t))).$
\vskip 5mm
     
\noindent {\em Step 2. Inequality in terms of the ODE  solution $w$}.
Put $ \bar G^2(t)  := \frac{\kappa}{c^2} \, G^2(t)$ and consider the  Cauchy problem:
\begin{equation} \label{eq:Cauchy}
\left\{\begin{array}{l}
w^{\prime \prime}-  \bar G^2(s) w=0, s \in \mathbb{R}_0^{+} \\
w(0)=0, w^{\prime}(0)=1
\end{array}\right.
\end{equation}
From the boundary conditions,  there  exists $\epsilon>0$ such that $w, w'>0$ on $(0,\epsilon)$, thus, $w''> 0$ and  
$w, w', w''>0$ on $\mathbb{R}^{+}$. 
Notice that the   expansion of $\Delta r$ at $o$  (see for instance \cite[formula (1.226)]{AMR} gives: 
\begin{equation}\label{e_radial_laplacian}
\Delta r=\frac{n-1}{r}+O(r).    
\end{equation}
    However, around $o$, one has $w(r)= r+O(r)$ and, then, \begin{equation} \label{eq:laplacianr}
        \lim_{x \to o} w^2(r) \Delta r=0.
    \end{equation}

Using \eqref{eq:bochner-3},  
\begin{equation} \label{eq:bb}
    \begin{aligned} 
\left(w^2 \varphi\right)^{\prime} & =2 w w^{\prime} \varphi+w^2 \varphi^{\prime}  \\ 
& \leq 2 w w^{\prime} \varphi-\frac{w^2}{n} \varphi^2 + n \,w^2  
{\bar G}^2(t) +  w^2\nabla^2(c h)(\dot{\gamma}, \dot{\gamma})
\end{aligned}\end{equation}
By definition, $\nabla^2(c h)(\dot{\gamma}, \dot{\gamma})=g\left(\nabla_{\dot{\gamma}} \nabla c h, \dot{\gamma}\right)=\dot{\gamma} (g(\dot{\gamma}, \nabla c h))=(g(\dot{\gamma}, \nabla c h))'$, thus,

\begin{equation} \label{eq:bb-2}
\begin{aligned}
\left(w^2 \varphi\right)^{\prime} &  \leq 2 w w^{\prime} \varphi-\frac{w^2}{n} \varphi^2  + n \, w^2 \bar G^2(t) + w^2 (g(\dot{\gamma}, \nabla c h))^{\prime}  \\
& =-\left(\frac{w \varphi}{\sqrt{n}}-\sqrt{n} w^{\prime}\right)^2+ 
n \left(w^{\prime}\right)^2  + n \, w^2 
\, { \bar G}^2(t)
+w^2 (g(\dot{\gamma}, \nabla c h))^{\prime}.
\end{aligned}
\end{equation}
Let  $\displaystyle \varphi_{\bar G}(t):=n \frac{w'(t)}{w(t)}$ on $(0,l]$. Using the equation satisfied by $w$, we have
$$ (w^2 \varphi_{\bar G})'= (n \; w w')'= n(w')^2+n \; w w''= n \left( w' \right)^2+ n \, w^2 \bar G^2(t). $$
Hence, inequality \eqref{eq:bb-2} implies
$$ (w^2 \varphi)'  \leq (w^2 \varphi_{\bar G})' + (g(\dot{\gamma}, \nabla c h))' \, w^2.$$
Taking into account \eqref{eq:laplacianr} and integrating on $[0, r]$, one gets
\begin{equation} \label{eq:5321}
    w^2(r) \varphi(r) \leq w^2(r) \varphi_{\bar G}(r)+\int_0^r(g(\dot{\gamma}, \nabla c h))^{\prime} w^2 d t
\end{equation}

\smallskip
     
\noindent{{\em Step 3. Inequality for the drift Laplacian}. } Define $\varphi_{c h}:=\left(\Delta_{c h} r\right) \circ \gamma=\varphi-$ $g(\nabla c h, \dot{\gamma}) \circ \gamma$, using \eqref{eq:5321} and integrating by parts, we get
\begin{equation} \label{eq:5321b}
\begin{aligned}
w^2(r) \varphi_{c h}(r) \leq & w^2(r) \varphi_{\bar G}(r)- w^2(r) \, g(\nabla c h, \dot{\gamma}) \circ \gamma(r)+\int_0^r(g(\dot{\gamma}, \nabla c h))^{\prime} w^2 d t \\
& =w^2(r) \varphi_{\bar G}(r)-\int_0^r\left(w^2\right)^{\prime} g(\nabla c h, \dot{\gamma}) d s
\end{aligned}
\end{equation}
From \eqref{eq:funcion-h}, $|\nabla c h|^2=H^2-c^2$,  thus $|\nabla c h|^2 \leq G^2 = \frac{c^2}{\kappa} \cdot \bar G^2$. Then, by Cauchy-Schwarz inequality, we have
$$
-g(\nabla c h, \dot{\gamma}) \circ \gamma \leq|\nabla c h| \circ \gamma \leq  \frac{c}{\sqrt{\kappa}} \,  \bar G \circ \gamma .
$$
Since $w, w'\geq 0$, we have that $\left(w^2\right)^{\prime}=2 w w^{\prime} \geq 0$. Thus, using this inequality, \eqref{eq:5321b} and integrating again by parts, we get the following one,
$$
\begin{aligned}
w^2(r) \varphi_{c h}(r) & \leq w^2(r) \varphi_{\bar G}(r)+ \frac{c}{\sqrt{\kappa}} w^2(r) \bar G(r)-\frac{c}{\sqrt{\kappa}}\int_0^r w^2 \bar G^{\prime} d s \\
& \leq w^2(r) \varphi_{\bar G}(r)+\frac{c}{\sqrt{\kappa}} w^2(r) \bar G(r) ;
\end{aligned}
$$
on $(0, l]$. This is equivalent to
$$
\varphi_{c h}(r) \leq \varphi_{\bar G}(r)+\frac{c}{\sqrt{\kappa}} \bar G(r), \forall r \in(0, l] .
$$

As this is valid for any minimizing geodesic joining $o$ with any point $x \in \Sigma \backslash(\{o\} \cup \operatorname{cut}(o))$ we have
\begin{equation} \label{eq:laplacian-2}
\Delta_{c h} r(x) \leq n \, \frac{w^{\prime}(r(x))}{w(r(x))}+ \frac{c}{\sqrt{\kappa}} \bar G(r(x)) \quad \mbox{on $\Sigma \backslash(\{o\} \cup \operatorname{cut}(o))$.}
\end{equation}

\smallskip
     
\noindent {{\em Step 4. Inequality in terms of $\bar G$ outside a ball.}} Let
$$
f(t)=\frac{1}{\bar G(0)}\left(\exp \left\{\int_0^t \bar G(s) d s\right\}-1\right) ;
$$
for $t \geq 0$,  
which satisfies $f(0)=0$,  $f^{\prime}(0)=1$,  and  

$$
\begin{aligned}
f^{\prime \prime}(t)-{\bar G}^2(t) f(t) 
=\frac{{\bar G}^{\prime}(t)}{{\bar G}(0)} \exp \left\{\int_0^t {\bar G}(s) d s\right\}+\frac{{\bar G}^2(t)}{{\bar G}(0)} \geq 0 .
\end{aligned}
$$
Taking into account that $w$ is a solution of the equality (see \eqref{eq:Cauchy}) and that $w>0, f>0$ on $(0,+\infty)$, 
we have that 
$w (f''-{\bar G} \, f) \geq 0$, 
$-f (w''-{\bar G} \, w)=0$ and,  therefore, $f^{\prime \prime} w-w^{\prime \prime} f \geq 0$. This is the derivative of the function $f^{\prime} w-w^{\prime} f$ and given that $f^{\prime}(0) w(0)-$ $w^{\prime}(0) f(0)=0$, we conclude that $f^{\prime} w-w^{\prime} f \geq 0$ on $[0,+\infty)$. In particular, for $t>0$, we have that
\begin{equation} \label{eq:caos}
\frac{w^{\prime}(t)}{w(t)} \leq \frac{f^{\prime}(t)}{f(t)}=\frac{\frac{{\bar G}(t)}{{\bar G}(0)} \exp \left\{\int_0^t {\bar G}(s) d s\right\}}{\frac{1}{{\bar G}(0)}\left(\exp \left\{\int_0^t {\bar G}(s) d s\right\}-1\right)} .
\end{equation}
{The required bound of the left-hand side in terms of $\bar G(t)$ (remedying also   that both sides diverge as $t \rightarrow 0$) is achieved noticing that $\exp \left\{\int_0^t {\bar G}(s) d s\right\} (\geq 1)$ is increasing, so that beyond $t>1$, \eqref{eq:caos} yields}
\begin{equation}\label{eq:lambda}
  \frac{w^{\prime}(t)}{w(t)} \leq \frac{f^{\prime}(t)}{f(t)} \leq {\bar G}(t) \frac{\exp \left\{\int_0^t {\bar G}(s) d s\right\}}{\left(\exp \left\{\int_1^t {\bar G}(s) d s\right\}-1\right)}, \quad \forall t>1.  
\end{equation}

As $\lambda(t):=\frac{\exp \left\{\int_0^t {\bar G}(s) d s\right\}}{\left(\exp \left\{\int_1^t {\bar G}(s) d s\right\}-1\right)}$
is decreasing there,  we get:
\begin{equation} \label{eq:55-55}
\frac{w^{\prime}(t)}{w(t)}  \leq \lambda(2) \, \cdot \, {\bar G}(t), \qquad  \, \forall t \geq 2.
\end{equation}
 Finally, using \eqref{eq:55-55} in \eqref{eq:laplacian-2}, we obtain 
$$
\Delta_{c h} r(x) \leq \left(\frac{c}{\sqrt{\kappa}}+ n \, \lambda(2)\right) {\bar G}(r(x)) ;
$$
on $\Sigma \backslash\left(\operatorname{cut}(o) \cup \bar{B}(o,2)\right)$. As the cut locus has measure zero in $\Sigma$, then the previous inequality also holds  weakly on $\Sigma \backslash \bar{B}(o,2)$. Thus, taking $C=1+n \lambda(2) \frac{\sqrt{\kappa}}{c},$
the required inequality holds.
\end{proof}


\begin{proof}[Proof of Theorem \ref{th:A}] {From Remark \ref{re:k=0} we can focus on the case $\kappa>0$ and assume that $\Sigma$ is complete (and non compact) and, applying Lemma \ref{lema_driftLaplacian} with $\bar G:= C \, G$,  
\begin{equation}\label{barG}
    \Delta_{c h} r(x)\leq {\bar G}(r(x)),    \qquad \forall x\in\Sigma \backslash\left(\{o\} \cup \operatorname{cut}(o) \cup \bar{B}(o,2)\right).
\end{equation}
}

     
\noindent {{\em Step 1. Bounds in terms of $\vartheta(     x)=\int_0^{r(x)} 1/G$}}.
Let start by the following definitions
$$
\phi(t):=\int_0^t \frac{d s}{{\bar G}(s)}, \qquad \forall t\geq 0, \qquad  \vartheta(x):=\phi(r(x)), \; \forall x\in \Sigma.
$$
We are interested in properties of $\theta$. By those of $\bar G$ inherited from $G$:

\begin{equation} \label{eq:Gons}
  \phi^{\prime}(t)=\frac{1}{{\bar G}(t)} \text { and } \phi^{\prime \prime}(t) \leq 0;  \qquad   \vartheta(x) \rightarrow+\infty \text { as } r(x) \rightarrow+\infty,
\end{equation}
the latter as $\frac{1}{{\bar G}(t)} \notin L^1(0,+\infty)$. 
By the chain rule, we get $\nabla \vartheta=\phi^{\prime}(r) \nabla r$ and
\begin{equation} \label{eq:Panisardi}
|\nabla \vartheta|=\frac{1}{{\bar G}(r)} \leq \frac{1}{{\bar G}(0)} \leq \Lambda:= \max \{1,1 / {\bar G}(0)\}.
\end{equation}
Finally, the following consequence of  \eqref{barG} holds by using \eqref{eq:Gons}:
\begin{equation} \label{eq:Panisardi-2}
\begin{aligned}
\Delta_{c h} \vartheta(x) 
& =\phi^{\prime}(r(x)) \Delta_{c h} r(x)+\phi^{\prime \prime}(r(x)) \\
& \leq \phi^{\prime}(r(x)) \Delta_{c h} r(x)=\frac{1}{{\bar G}(r(x))} \Delta_{c h} r(x) 
\leq 1 \leq \Lambda ,
\end{aligned}
\end{equation}

     
\noindent {{\em Step 2. Maximum for an adapted difference $u-\vartheta_{\varsigma}$.}}
Now, consider a function $u \in \mathrm{C}^2(\Sigma)$ such that $u^*:=\sup _M u<+\infty$. Fix $\eta>0$ and consider
\begin{equation}\label{e_eta}
\begin{gathered}
A_\eta:=\left\{x \in M: u(x)>u^*-\eta\right\}, \\
B_\eta:=\left\{x \in A_\eta:|\nabla u(x)|<\eta\right\} .
\end{gathered}
\end{equation}
Since $\Sigma$ is complete, from \cite[Prop. 2.2]{AMR}, $B_\eta \neq \varnothing$. Therefore, all we need to prove is
$$
\inf _{B_\eta} \Delta_{c h} u \leq 0 .
$$
Reasoning by contradiction, assume that there exists $\varsigma_0>0$ such that
\begin{equation} \label{eq;morillas}
    \inf _{B_\eta} \Delta_{c h} u \geq \varsigma_0>0 .
\end{equation}

First of all, this condition implies that $u^*$ cannot be attained at $M$. Otherwise, there exists a point $x_0 \in M$ such that $u^*=u\left(x_0\right), \nabla u\left(x_0\right)=0$ and $\Delta u\left(x_0\right) \leq 0$; as $\Delta_{c h} u\left(x_0\right)=\Delta u\left(x_0\right)$, a contradiction with \eqref{eq;morillas} appears.
Define
$$
\Omega_T=\{x \in M: \vartheta(x)>T\} .
$$
Its complement $\Omega^c_T:=\Sigma \backslash \Omega_T=\{x \in \Sigma: \vartheta(x) \leq T\}$ is closed, and it is also compact because it is bounded, as 
$\vartheta$ is a function that diverges radially. 
As a consequence, for all $T>0, u$ attains its maximum in $\Omega_T^c$, namely:
$$u_T^*:= \max_{\Omega_T^c} (u).$$
As $\left\{\Omega^c_T\right\}$ is an exhaustion, that is, $\Omega_{T_1}^c \subset \Omega_{T_2}^c$ for $T_1<T_2$ and $\Sigma \equiv \bigcup_{T>0} \Omega_T^c$, there exists a divergent sequence $\left\{T_j\right\}_{j \in \mathbb{N}}$ such that
\begin{equation} \label{eq:araña}
    u_{T_j}^* \rightarrow u^* \text { as } j \rightarrow+\infty \text {. }
\end{equation}
Then, we can take $T_1$ such that $u_{T_1}^*>u^*-\eta / 2$ and $\Omega_{T_1} \subset \Sigma \setminus \bar B (o,2)$, and notice that $u^*_{T_1}<u^*_{T_2}$ for some $T_2>T_1$. Now choose $\alpha, \delta>0$ such that
\begin{equation}\label{e_delta}
u_{T_1}^*<\alpha<\alpha+\delta<u_{T_2}^*.
\end{equation}

Next, for $\varsigma>0$, we define
$$
\vartheta_{\varsigma}(x):=\alpha+\varsigma\left(\vartheta(x)-T_1\right).
$$
Our aim is to prove that  the function $u-\vartheta_{\varsigma}$ attains a local maximum in  $\bar{\Omega}_{T_1} \backslash \Omega_{T_3}$, for a suitable choice of $\varsigma>0$ {and $T_3>T_2$.}

Firstly, it is obvious that $\vartheta_{\varsigma}(x)=\alpha$ on $\partial \Omega_{T_1}$ and
\begin{equation} \label{eq:gin-y-1}
\alpha \leq \vartheta_{\varsigma}(x) \leq \alpha+\varsigma\left(T_2-T_1\right) \text { on } \bar\Omega_{T_1} \backslash \Omega_{T_2} .
\end{equation}
Now, we  choose a small enough $\varsigma$ such that
$\varsigma\left(T_2-T_1\right)<\delta$ and, thus
\begin{equation} \label{eq:gin-y-3}
\alpha \leq \vartheta_{\varsigma}(x)<\alpha+\delta \text { on } \bar\Omega_{T_1} \backslash \Omega_{T_2} .
\end{equation}
Choosing a smaller $\varsigma$ if necessary, we can also ensure, for $\eta$ in \eqref{e_eta} and $\varsigma_0$ in \eqref{eq;morillas},
\begin{equation} \label{eq:gin-y}
\left\{\begin{array}{l}
\Delta_{c h}\left(\vartheta_{\varsigma}\right)(x)=\varsigma \Delta_{c h}(\vartheta)(x) \leq \varsigma \Lambda<\varsigma_0, \\
\left|\nabla \vartheta_{\varsigma}\right|=\varsigma|\nabla \vartheta| \leq \varsigma \Lambda<\eta .
\end{array}\right.
\end{equation}


With these choices,  consider the function $u-\vartheta_{\varsigma}$. For any $x \in \partial \Omega_{T_1}$ we have
\begin{equation} \label{eq:gin-y-4}
\left(u-\vartheta_{\varsigma} \right)(x) \leq u^*_{T_1}-\alpha <0 .
\end{equation}
By the definitions of $\Omega_{T}$ and $u^*_T$, there exists $\bar{x}\in \Omega_{T_1} \backslash \Omega_{T_2}$ such that $u(\bar{x})=u_{T_2}^*$ and,  
\begin{equation} \label{eq:gin-y-44}
\left(u-\vartheta_{\varsigma}\right)(\bar{x}) \geq u_{T_2}^*-\alpha-\delta>0 ,
\end{equation}
the latter from \eqref{eq:gin-y-3}.
Moreover, as $\vartheta$ is divergent  and   $u^*<+\infty$, it is also possible to choose $T_3>T_2$  such that
\begin{equation} \label{eq:gin-y-55}
\left(u-\vartheta_{\varsigma}\right)(x)<0 \text { for } x \in \Omega_{T_3} .
\end{equation}
{(for example, $T_3\geq u^*$ suffices).}
Finally, we conclude from \eqref{eq:gin-y-4}, \eqref{eq:gin-y-44} and \eqref{eq:gin-y-55}  that $u-\vartheta_{\varsigma}$ attains a local maximum $x_0$ in 
the interior of the compact set $\bar \Omega_{T_1} \backslash \Omega_{T_3}$. 
Then, $\mu:=\left(u-\vartheta_{\varsigma}\right)\left(x_0\right)>0$
and
$$
u\left(x_0\right)=\vartheta_{\varsigma}\left(x_0\right)+\mu>\vartheta_{\varsigma}\left(x_0\right)>\alpha>u_{T_1}^*>u^*-\eta / 2 ,
$$
that is,  $x_0 \in A_\eta \cap \Omega_{T_1}$. 
\vskip 5mm

\noindent {{\em Step 3. Discussion of cases.}} At this point, we have to distinguish two cases.
 \vskip 3mm
 
 {\em CASE I:} $x_0 \notin \operatorname{cut}(o)$.
Since $x_0$ is a critical point, $\nabla\left(u-\vartheta_{\varsigma}\right)\left(x_0\right)=0$. Thus, by the linearity of the gradient and \eqref{eq:gin-y}, we have
$$
|\nabla u|\left(x_0\right)=\left|\nabla \vartheta_{\varsigma}\right|\left(x_0\right)<\varsigma \Lambda<\eta ,
$$
and $x_0 \in B_\eta$. Since $x_0$ is a maximum, $\Delta_{c h}(u-\vartheta_{\varsigma})\leq 0$, and using again \eqref{eq:gin-y}
$$
\Delta_{c h}(u)\left(x_0\right)\leq \Delta_h\left(\vartheta_{\varsigma}\right)\left(x_0\right)<\varsigma_0,
$$
in  contradiction with \eqref{eq;morillas}.
\vskip 3mm

{\em CASE II:} $x_0 \in \operatorname{cut}(o)$.
Next, the well-known {Calabi's trick is used} to bound $|\nabla u|$. Let us consider a minimizing geodesic $\sigma$ parametrized by the arc length joining $o$ with $x_0$. Let $o_{\varepsilon}:=\sigma(\varepsilon)$ for a small $\varepsilon>0$ such that   $r_{\varepsilon}(x)=\operatorname{dist}\left(x, o_{\varepsilon}\right)$ is smooth in a neighborhood of $x_0$ (see for example   \cite[Lemma 2.1]{AMR} or \cite[p. 284]{petersen}). Notice that 
$$
r(x)=\operatorname{dist}(o, x) \leq \left(o, o_{\varepsilon}\right) + \operatorname{dist}\left(o_{\varepsilon}, x \right)+\operatorname{dist}=r_{\varepsilon}(x)+\varepsilon,
$$
by the triangle inequality, with  equality on any point of $\sigma$, in particular, on $x_0$. Let
$$
\vartheta^\epsilon(x):=\phi\left(r_\epsilon(x)+\epsilon\right) .
$$
Since $\phi$ is increasing, we have
\begin{equation} \label{eq:rojo}
\vartheta(x)=\phi(r(x)) \leq \phi\left(r_\epsilon(x)+\epsilon\right)=\vartheta^\epsilon(x),
\end{equation}
in particular,  the equality holds for $x_0$. We define now
$$
\vartheta_{\varsigma}^\epsilon=\alpha+\varsigma\left(\vartheta^\epsilon-T_1\right)
$$
where $\alpha$ and $\varsigma$ are the same chosen for $\vartheta_\varsigma$ above. From \eqref{eq:rojo}, 
\begin{equation} \label{eq:rojo-1}
u(x)-\vartheta_{\varsigma}^\epsilon(x) \leq u(x)-\vartheta_{\varsigma}(x),
\end{equation}
on $\Sigma$. {Thus, in $\bar \Omega_{T_1} \backslash \Omega_{T_3}$:
\begin{equation} \label{eq:rojo-2}
u-\vartheta_{\varsigma}^\epsilon \leq \mu.
\end{equation}}
As  the equality in \eqref{eq:rojo} holds for $x_0$, the same happens {in \eqref{eq:rojo-2}, that is, 
$u-\vartheta_{\varsigma}^\epsilon$ attains  a local maximum at $x_0$}. Given that $x_0 \notin \operatorname{cut}\left(o_\epsilon\right)$, we can compute the gradient of $\vartheta_{\varsigma}^\epsilon$ in $x_0$. Since it is a local maximum,  $\nabla u\left(x_0\right)=\nabla \vartheta_{\varsigma}^\epsilon\left(x_0\right)$ and, then,

$$
\begin{aligned}
\left|\nabla u\left(x_0\right)\right| & =\left|\nabla \vartheta_{\varsigma}^\epsilon\left(x_0\right)\right|=\varsigma\left|\nabla \vartheta^\epsilon\left(x_0\right)\right|=\varsigma \phi^{\prime}\left(r_\epsilon\left(x_0\right)+\epsilon\right)\left|\nabla r_\epsilon\left(x_0\right)\right| \\
& \leq\frac{\varsigma}{\bar G\left(r_\epsilon\left(x_0\right)\right)} \leq \frac{\varsigma}{\bar G(0)}<\eta .
\end{aligned}
$$
Thus, $x_0 \in B_\eta$.

To obtain the required contradiction with the drift Laplacian, consider the set
$
K:=\left\{x \in \Omega_{T_1}:\left(u-\vartheta_{\varsigma}\right)(x)=\mu\right\},
$
which includes $x_0\in K$. For every $x \in K$, we have
$$
u(x)=\alpha+\varsigma\left(\vartheta(x)-T_1\right)+\mu>\alpha>u^*-\eta / 2,
$$
which implies that $K \subset A_\eta$. 
Choosing any regular value $\chi$ of $u-\vartheta_\varsigma$ satisfying  $0<\chi<\mu$ (thus $\left(u-\vartheta_{\varsigma}\right)(x_0)>\chi$) let  $\Lambda_{x_0}$ denote  the connected component of the set
$
\left\{x \in \Omega_{T_1}:\left(u-\vartheta_{\varsigma}\right)(x)>\chi\right\}
$
containing $x_0$. As $\Lambda_{x_0}$ is contained in $\bar{\Omega}_{T_1} \backslash \Omega_{T_3}$ (recall  \eqref{eq:gin-y-4} and \eqref{eq:gin-y-55}), it is  relatively compact. We know that { \eqref{eq:gin-y} holds weakly on $M \backslash \bar{B}(o,2)$} so, trivially, 
$$
\Delta_{c h}\left(\vartheta_{\varsigma}+\chi\right)<\varsigma_0 \leq \Delta_{c h} u \text { weakly on } \Lambda_{x_0} .
$$
Therefore, from the maximum principle, $u-\vartheta_{\varsigma}-\chi$ attains it maximum at the boundary. As $u-\vartheta_{\varsigma}-\chi=0$ on $\partial \Lambda_{x_0}$, so $u \leq \vartheta_{\varsigma}+\chi$ on $\bar{\Lambda}_{x_0}$. Nevertheless, since $x_0 \in \Lambda_{x_0}$, the previous inequality implies that $\mu \leq \chi$ which is the required contradiction.
\end{proof}

\section{Non-existence results} \label{sec:nonexistence} Next, we  prove  nonexistence results  for  complete spacelike translating solitons. We start  with a general non-existence result for manifolds with non-negative Ricci curvature. {It is contained in \cite[Theorem 4.2]{CQ} which develops ideas by 
Chen and Qiu in \cite[Theorem 3 and Remark 4]{CQ}, and the proof is included for the sake of completeness.}

\begin{proposition}[{\cite{BL,CQ}}]\label{th:nonexistence-1} Let $\left(M, g_M\right)$ be a complete Riemannian manifold with  non-negative Ricci curvature. Consider $\mt=M \times \mathbb{R}$ endowed with the metric $\gt=g_M-d \, t^2$. Then, there are no complete spacelike translating solitons satisfying the Omori-Yau principle for $\Delta_{c \, h}$. \end{proposition}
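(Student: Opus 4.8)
The plan is to argue by contradiction: assume a complete spacelike $c$-translating soliton $F:\Sigma\to\mt$ exists for which the Omori--Yau principle for $\Delta_{ch}$ holds, and to exploit that, by Remark \ref{r_signH}, the mean curvature is strictly positive; in fact $H\ge c>0$ (from \eqref{soliton-scalar} and the reverse Cauchy--Schwarz inequality for the future-directed timelike vectors $\partial_t,\nu$, exactly as in Lemma \ref{l_H}). The driving identity is the one for $H$ in Lemma \ref{lem:alias}: from \eqref{f_deltaH} with $\varepsilon=-1$ one reads off $\Delta_{ch}H=\big(\operatorname{Ric}_M(\nu_M,\nu_M)+|A|^2\big)H$. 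Since $\operatorname{Ric}_M\ge 0$ and, by Cauchy--Schwarz together with \eqref{eq:traceA}, $|A|^2\ge(\operatorname{trace}A)^2/n=H^2/n$, this yields the sub-solution inequality $\Delta_{ch}H\ge |A|^2H\ge \tfrac1n H^3\ge \tfrac{c^2}{n}H>0$, so $H$ is a positive drift-subharmonic function that is moreover bounded below away from zero.

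The first half of the argument is the easy implication. The gradient of $H$ is controlled by $A$: from \eqref{e_H_A}, $\nabla H=-\varepsilon c\,A\partial_t^\top$, and using $|\partial_t^\top|^2=|\nabla h|^2=H^2/c^2-1$ from \eqref{eq:funcion-h} one gets $|\nabla H|^2\le |A|^2(H^2-c^2)$. Feeding this into $\Delta_{ch}(\log H)=\Delta_{ch}H/H-|\nabla H|^2/H^2=\operatorname{Ric}_M(\nu_M,\nu_M)+|A|^2-|\nabla H|^2/H^2$ gives, after cancellation, the clean bound $\Delta_{ch}(\log H)\ge \operatorname{Ric}_M(\nu_M,\nu_M)+c^2|A|^2/H^2\ge c^2/n>0$. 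Hence, were $H$ (equivalently $\log H$) bounded above, the Omori--Yau principle applied to $\log H$ would produce a sequence $\{x_k\}$ with $\liminf\Delta_{ch}(\log H)(x_k)\le 0$, contradicting the uniform positive lower bound $c^2/n$. The same contradiction follows directly from $\Delta_{ch}H\ge \tfrac{c^2}{n}H\ge \tfrac{c^3}{n}$ applied to $H$ itself, and it is the translator analogue of the constant-drift identity $\Delta_{ch}(ch)=c^2$.

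The hard part will be to discard the remaining possibility that $H$ is unbounded above, since the Omori--Yau principle as stated applies only to functions bounded above; a naive substitution $u=H^{-p}$ is useless, because the gradient term produces an exact borderline cancellation against $\Delta_{ch}H$, so no bounded power of $H$ is drift-subharmonic. The way around this is a Keller--Osserman type a priori estimate for $\Delta_{ch}H\ge \tfrac1n H^3$: the convergence of $\int^{\infty}\big(\int_0^t s^3\,ds\big)^{-1/2}\,dt=\mathrm{const}\cdot\int^{\infty} t^{-2}\,dt<\infty$ is precisely the Keller--Osserman condition, and under the standing Omori--Yau hypothesis the corresponding local sup-estimates of \cite{AMR} (see also \cite{CQ}) bound $H$ on each geodesic ball in terms of the inverse square of its radius. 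Letting the radius tend to infinity on the complete manifold $\Sigma$ forces $H\equiv 0$, in flat contradiction with $H\ge c>0$; equivalently, it yields $\sup_\Sigma H<\infty$ and we close via the easy implication above. I expect this a priori bounding step --- reconciling the unbounded-$H$ case with a maximum principle valid only for bounded-above functions --- to be the crux, while the curvature computations are routine given Lemma \ref{lem:alias}.
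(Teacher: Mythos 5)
Your first half is sound and in fact cleaner than what one usually sees: the identities $\Delta_{ch}H=\bigl(\Ric_M(\nu_M,\nu_M)+|A|^2\bigr)H$, $|\nabla H|^2\le |A|^2(H^2-c^2)$ and the resulting bound $\Delta_{ch}(\log H)\ge c^2|A|^2/H^2\ge c^2/n$ are all correct consequences of Lemma \ref{lem:alias} and \eqref{e_H_A}, and they do kill the case $\sup_\Sigma H<\infty$. The problem is the step you yourself flag as the crux. The Omori--Yau principle assumed in the proposition is a purely qualitative, global statement about functions that are \emph{bounded above}; it does not produce quantitative Keller--Osserman sup-estimates on geodesic balls. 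Those local estimates are proved by comparison with radial supersolutions and therefore need Laplacian comparison on $\Sigma$, i.e.\ a lower bound on $\Ric$ (or $\Ric_{ch}$) of $\Sigma$ --- which is not a hypothesis here, and which by Lemma \ref{lema_driftRicci} is itself controlled only in terms of $H^2$, so invoking it to bound $H$ is circular. (The drift vector $\nabla(ch)$, of norm $\sqrt{H^2-c^2}$, is likewise unbounded in the scenario you are trying to exclude, which further obstructs any ball-by-ball estimate.) Neither \cite{AMR} nor \cite{CQ} supplies an a priori bound of the kind you describe under the bare hypothesis ``Omori--Yau holds for $\Delta_{ch}$''. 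So the unbounded-$H$ case is not closed.

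The paper avoids the dichotomy altogether: it applies Omori--Yau to the \emph{automatically bounded} function $f=-1/\sqrt{1+H^2/c^2}\in[-1/\sqrt{2},0)$ and derives, from $\Delta_{ch}H^2\ge \tfrac2n H^4+2|\nabla H|^2$, the inequality
\begin{equation*}
-f\,\Delta_{ch}f+3|\nabla f|^2\ \ge\ \frac{c^2}{n}\,(1-f^2)^2 .
\end{equation*}
Evaluating along an Omori--Yau sequence for $f$ and using \emph{both} $|\nabla f(x_k)|\to 0$ and $\liminf\Delta_{ch}f(x_k)\le 0$ forces $(\sup f)^2=1$, contradicting $\sup f\le -1/\sqrt{2}$ (equivalently $H^2/c^2\ge 1$). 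Note that this genuinely needs the gradient condition of the Omori--Yau sequence: your own ``borderline cancellation'' observation is accurate, and indeed one can check that no bounded function $\psi(H)$ satisfies a pointwise bound $\Delta_{ch}\psi(H)\ge\delta>0$ for large $H$ (even the paper's $f$ has $\Delta_{ch}f<0$ where $H^2/c^2>2+\sqrt{3}$). Replacing your Keller--Osserman step by this single application of Omori--Yau to $f$ repairs the argument; your computation for $\log H$ then becomes a special case rather than half of a case analysis.
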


\begin{proof} We proceed by contradiction. Assume that there exists a translator $\Sigma \subset \mt$ with those conditions. Therefore, by hypothesis, the Omori-Yau maximum Principle for the drift Laplacian $\Delta_{c h}$ holds on $\left(\Sigma, g\right)$. Consider the function $$f(p)=-\frac{1}{\sqrt{1+H^2(p)/ c^2}}, \qquad \forall p\in \Sigma.$$ As $\sup f \leq 0$,  the Omori-Yau maximum principle is applicable and, easily, 
\begin{equation} \label{eq:william}
\Delta_{c h} f=\frac{\Delta_{c h} H^2(p)}{2 c^2\left(1+H^2 / c^2\right)^{3 / 2}}-\frac{3\left|\nabla H^2\right|^2}{4 c^4\left(1+H^2 / c^2\right)^{5 / 2}} .
\end{equation}

Using the hypothesis $\operatorname{Ric}_M \geq 0$ and the well-known inequality $H^2 \leq n|A|^2$ in Lemma \ref{lem:alias}, we get $\Delta_{c h} H \geq \frac{H^3}{n}$. Then,  a straightforward computation shows
$$
\Delta_{c h} H^2 \geq \frac{2}{n} H^4+2|\nabla H|^2 .
$$

Plugging this in \eqref{eq:william}, we have
$$
\begin{aligned}
\Delta_{c h} f & \geq \frac{\frac{2}{n} H^4+2|\nabla H|^2}{2 c^2\left(1+H^2 / c^2\right)^{3 / 2}}-\frac{3\left|\nabla H^2\right|^2}{4 c^4\left(1+H^2 / c^2\right)^{5 / 2}} \\
& \geq \frac{H^4}{n c^2\left(1+H^2 / c^2\right)^{3 / 2}}-\frac{3\left|\nabla H^2\right|^2}{4 c^4\left(1+H^2 / c^2\right)^{5 / 2}}
\end{aligned}
$$
which is equivalent to
$$
\frac{\Delta_{c h} f}{\sqrt{1+H^2 / c^2}}+\frac{3\left|\nabla H^2\right|^2}{4 c^4\left(1+H^2 / c^2\right)^3} \geq \frac{H^4}{n c^2\left(1+H^2 / c^2\right)^2} .
$$
It is not hard to see that the previous equation is equivalent to
$$
-f \Delta_{c h} f+3|\nabla f|^2 \geq \frac{c^2}{n}\left(1-f^2\right)^2 .
$$

By the Omori-Yau maximum principle for $\Delta_{c h}$, there exists a sequence $\left\{x_k\right\}_{k\in \n}$ in $\Sigma$ with the properties \eqref{eq:omori-yau}. Hence, taking limit as $k \to \infty$, we get $$\frac{c^2}{n} (1-(\sup f)^2)^2 \leq 0,$$
which means that $\sup f=-1.$ This implies that $H^2/c^2=0$, which is a contradiction because, for a translator, $H^2/c^2 \geq 1$ (recall \eqref{soliton-scalar}). 
\end{proof}

Now, our main results of nonexistence of solitons can be obtained.

\begin{theorem} \label{tII} 
Let $(M,g_M)$ be a complete Riemannian manifold with $K_M\geq 0$ and Ric$_M\geq -c, (c\geq 0)$. Let  $\mt=M \times \mathbb{R}$ endowed with the metric $\gt=g_M-d t^2$. 

Let  $\Sigma$ be a  spacelike translating soliton with respect $g=F^*(\gt)$ and let $r(\cdot)$ be the distance in $\Sigma$ to a fixed point $o\in \Sigma.$

Then, the mean curvature $H (>0)$ of $\Sigma$ cannot be bounded as
$$H(x) \leq G(r(x)), \qquad \forall x \in \Sigma \backslash B(o,R),$$
for any function 
 $G \in C^0([0,+\infty[)$ satisfying: 
 
 (a)  $G>0$, (b) $\int_0^\infty\frac{dt}{G(t)}=\infty$ and (c) $G$ is nondecreasing.

{ \noindent{In case $c=0$, the conditions (b) and (c) can be dropped.}}
\end{theorem}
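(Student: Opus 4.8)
The plan is to argue by contradiction, assembling the two results already at our disposal: the Omori--Yau principle of Theorem~\ref{th:A} and the non-existence statement of Proposition~\ref{th:nonexistence-1}. Fixing an arbitrary admissible $G$ (continuous, satisfying (a), (b), (c)), I would suppose, contrary to the claim, that a spacelike translating soliton $\Sigma$ exists with $H(x)\leq G(r(x))$ for all $x\in\Sigma\setminus B(o,R)$. The goal is then to verify that $\Sigma$ meets every hypothesis of Proposition~\ref{th:nonexistence-1} --- the spacelike soliton structure, $\ricc_M\geq 0$, completeness of $(\Sigma,g)$, and the Omori--Yau principle for $\Delta_{c\,h}$ --- whereupon that proposition delivers the contradiction, and since $G$ was arbitrary the theorem follows.

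The core step is to extract the Omori--Yau principle from the $H$-bound through Theorem~\ref{th:A}, using the lower bound $\Sec_M\geq -c$ of the hypothesis as the bound $\Sec_M\geq-\kappa$ demanded there. Two cosmetic gaps between the present hypotheses and those of Theorem~\ref{th:A} must be closed: here $G$ is only continuous and the bound holds merely outside $B(o,R)$, whereas Theorem~\ref{th:A} asks for a $C^1$ function obeying (i)--(iii) on all of $\Sigma$. Both are routine bookkeeping. By Proposition~\ref{p_iii} I would replace $G$ by a $C^1$ majorant $\widetilde G\geq G$ satisfying (i), (ii), (iii), and by Remark~\ref{r_trivial} the restriction to the exterior of a ball, as well as the passage to the larger admissible function $\widetilde G$, is harmless. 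Since $H\leq G(r)\leq\widetilde G(r)$ outside $B(o,R)$, Theorem~\ref{th:A} applies and yields the Omori--Yau principle for $\Delta_{c\,h}$ on $(\Sigma,g)$. Completeness of $(\Sigma,g)$ need not be argued separately, as it follows from Corollary~\ref{co:completeness}(1b) and is in any case built into the proof of Theorem~\ref{th:A} via Remark~\ref{re:k=0}.

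With the Omori--Yau principle secured and $\ricc_M\geq 0$ in force, Proposition~\ref{th:nonexistence-1} rules out the existence of such a complete spacelike translating soliton, contradicting the assumption. For the limiting case $c=0$ (whence $\kappa=0$) Remark~\ref{re:k=0} permits dropping (b) and (c): there $\ricc_{c\,h}(X,X)\geq|AX|^2\geq 0$, completeness holds for any positive continuous $G$ by Corollary~\ref{co:completeness}(1b), and the Omori--Yau principle follows directly from the Chen--Qiu result cited in Remark~\ref{re:k=0}; the remainder of the argument is identical.

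I expect no genuine difficulty, since the analytic substance was carried out in Sections~\ref{s3} and~\ref{s4}; this theorem is essentially their synthesis. The only care required is to reconcile the regularity and monotonicity conventions on $G$ between Theorem~\ref{th:A} and the present statement, and to absorb the ``outside a ball'' localization --- both handled cleanly by Proposition~\ref{p_iii} and Remark~\ref{r_trivial}. The one point worth double-checking is that the two curvature hypotheses play disjoint roles ($\Sec_M\geq-c$ feeding Theorem~\ref{th:A}, and $\ricc_M\geq 0$ feeding Proposition~\ref{th:nonexistence-1}), so that neither proposition is invoked outside the regime where it was proved.
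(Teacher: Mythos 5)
Your proposal is correct and follows essentially the same route as the paper: argue by contradiction, obtain completeness via Corollary~\ref{co:completeness} and the Omori--Yau principle for $\Delta_{c\,h}$ via Theorem~\ref{th:A} (bridging the $C^0$/(a)--(c) hypotheses to the $C^1$/(i)--(iii) ones through Proposition~\ref{p_iii} and Remark~\ref{r_trivial}), then invoke Proposition~\ref{th:nonexistence-1}, with Remark~\ref{re:k=0} handling the case $c=0$. The paper's proof is a terser version of exactly this synthesis, and your explicit attention to the regularity/localization bookkeeping and to the disjoint roles of the two curvature hypotheses only makes the argument more complete.
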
 

\begin{proof}
Reasoning by contradiction, notice that
the completeness of the soliton is ensured by Theorem \ref{t_completeness} and the applicability of the Omori-Yau principle to the drift Laplacian of $h$ by Theorem \ref{th:A}.
    Then, the contradiction follows from {Theorem}~\ref{th:nonexistence-1}.
    
{For the last assertion, apply Remark \ref{re:k=0}. }
\end{proof}

As a consequence, we can  reformulate this result from the viewpoint of the soliton graph equation, which essentially captures our result (recall Remark \ref{p_cover}).

\begin{proof}[Proof of Corollary I] 
    noticing that \eqref{e_soliton} is the equation of graph solitons coming from Example \ref{product-example}, $|\nabla^M u|<1$ is
    the restriction of being spacelike and $g$ is the metric on $M$ induced by the graph from the metric $\gt$ on $\mt$, {as well as the 
    criterion for $r_M$ bounds (Remark \ref{r_GM}) for the assertion on this distance.}
\end{proof}

\section{An illustrative example}  \label{sec:counter-example}
\vspace{.4cm}

In this concluding section, we aim to illustrate the construction of a {soliton with linear growth of $H$ respect to $r$.  
It also shows the sharpness of the bounds in terms of $r$ respect to the bounds respect to $g_M$}. 
Specifically, our example underscores the necessity of a lower bound in the sectional curvature to ensure the validity of Theorem II and Corollary I. 

Here, $M$ has dimension 2, thus, Ricci and sectional curvatures are equivalent (so, the non-negativeness of the former would imply the non-existence result by Remark \ref{re:k=0} and Theorem \ref{th:nonexistence-1}.)
In higher dimensions  we are imposing conditions on the sectional curvature, to ensure the Omori-Yau principle for the drift Laplacian (Theorem~\ref{th:A}), and on the Ricci one, to apply Theorem~\ref{th:nonexistence-1} using formula ~\eqref{f_deltaH}, consistently with \cite{BL}. The relation among these curvatures  would deserve a separated study, as they involve  deep properties of manifolds with non-negative Ricci curvature (see \cite{AG} and related references).

\subsection{Complete Riemannian surface $(M,g_M)$ with $K$  non lowerly bounded.} 
Consider $M=\R \times ]-1,1[$ and a smooth map $\phi:]-1,1[ \to ]0,2]$  such that:
\begin{enumerate}
\item[i)] $\phi(y)=1-|y|, \forall y \in \big]-1,-\frac{1}{2}\big[ \cup \big] \frac{1}{2}, 1 \big[$.
\item[ii)] $|\phi'(y)|<1, \forall y \in ]-1/2,1/2[$,
\item[iii)] $\phi''(y) \leq 0, \forall y \in ]-1/2,1/2[$.
\end{enumerate}
On $M$ we define the following Riemannian metric
$$ g_M=\frac{1}{\phi(y)^2}(dx^2+dy^2) \;.$$
The completeness of $g_M$ follows from $\int_{-1}^{-1/2}dy/\phi(y)=\int_{1/2}^1 dy/\phi(y)=\infty$.
 As $g_M={\rm e}^{2 \omega} g_0$ for $\omega=\ln\left(\frac{1}{\phi(y)} \right)$ and $g_0$ the Euclidean metric on $M$, its Gauss curvature is 
$$K_{g_M}(x,y)= {\rm e}^{2 \omega(y)}  (K_{g_0}(x,y)-\Delta_{g_0} \omega(y))= -{\rm e}^{2 \omega(y)}\left(\left(\frac{\phi'(y)}{\phi(y)}\right)^2-\frac{\phi''(y)}{\phi(y)} \right)<0.$$
In particular, $K_{g_M}$ is not lowerly bounded when 
$y\rightarrow \pm 1$.

\subsection{The soliton graph}
Consider on $M$ the function $F:M \to \R$ given by $F(x,y)=f(y)$, where  $f:]-1,1[ \to \R$ is smooth. $\Sigma:= {\rm graph}(F)$ is a spacelike translating soliton of velocity $c=1$, if and only if the function $f$ satisfies:

\begin{equation} \label{eq:tsgrafo} {\rm div}^M \left( \frac{\nablam f}{\sqrt{1-|\nablam f|^2}} \right)=
\frac{1}{\sqrt{1-|\nablam f|^2}} \, .
\end{equation}
Taking into account that $\nablam f=(0,\phi(y)^2 f'(y))$, the right-hand side of \eqref{eq:tsgrafo} is
$$ 
1/\sqrt{1-(\phi(y) f'(y))^2}\;  $$
and, using ${\rm div}^M (\sum_i^n  a^i\partial_i)=
\frac{1}{\sqrt{\det g}} \sum_{i=1}^n \frac{\partial}{\partial x^i}(a^i \sqrt{\det g})$, the left-hand one,

$$\phi(y)^2 \partial_y \left( \frac{f'(y)}{\sqrt{1-(\phi(y) f'(y))^2}} \right)= \phi(y)^2 \frac{\phi(y) \phi'(y)f'(y)^3+f''(y)}
{(1-(\phi(y) f'(y))^2)^{\frac{3}{2}}} \, .
$$ 
Then, \eqref{eq:tsgrafo} is equivalent to the ODE:
\begin{equation} \label{eq:edof} -1 + \phi(y)^2 \left( f'(y)^2(1+\phi(y)f'(y)\phi'(y))+f''(y)\right)=0\, .\end{equation}
It is  known that there exists a unique solution  $f:]-\varepsilon,\varepsilon[ \to \R$ satisfying $f(0)=f'(0)=0$ for some $\epsilon>0$, and let us  prove that its maximal domain  is $]-1,1[$. 

 \begin{figure}[htbp]
\begin{center}
\includegraphics[height=8.5cm]{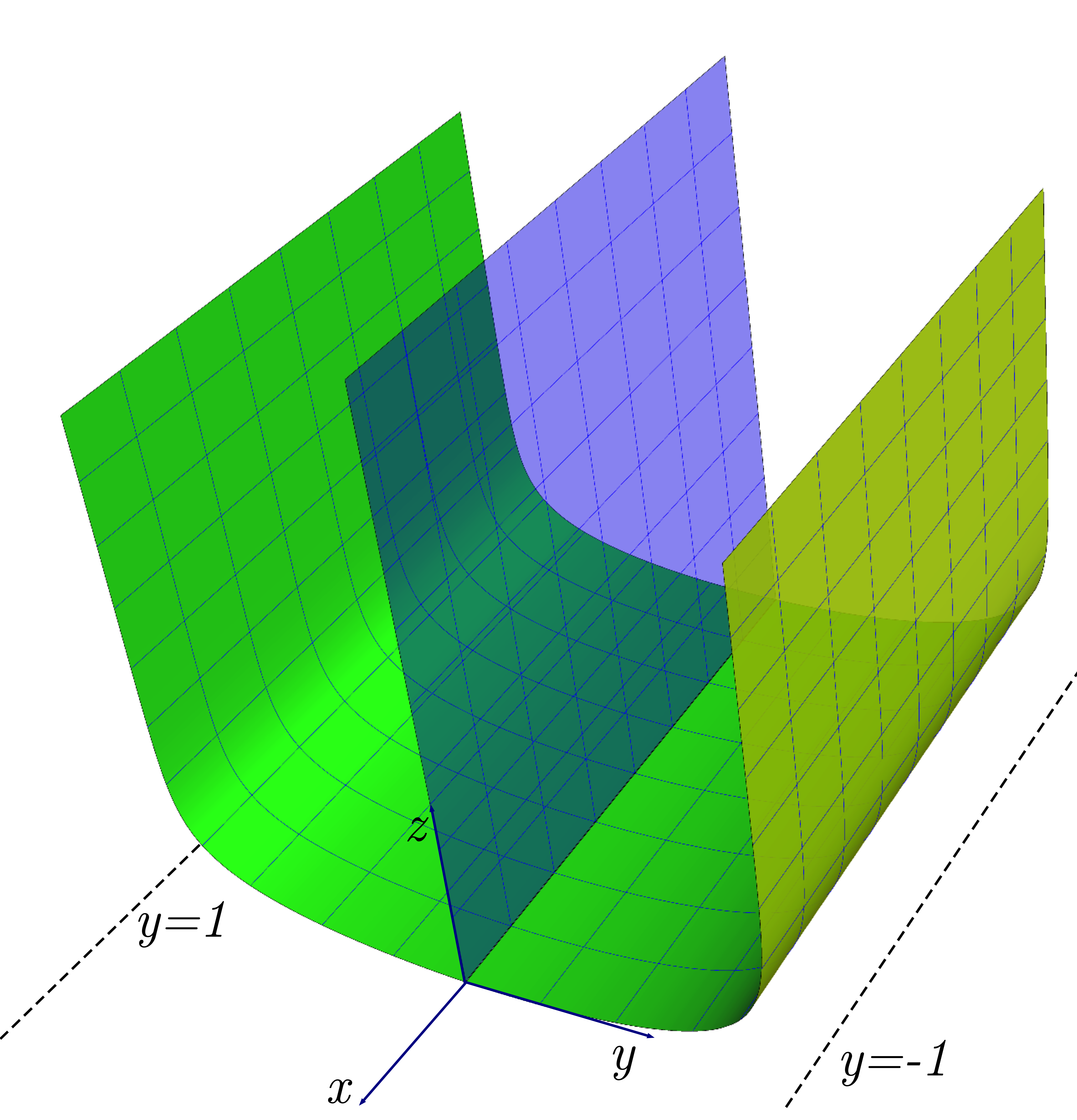}
\caption{\small The translator $\Sigma$ in $\mt$. The hypersurface $\{y=0\}$  appears in purple}
\label{fig:example                        g}
\end{center}
\end{figure}
Putting $z(y)=\phi(y) f'(y)$, formula \eqref{eq:edof} can be rewritten as
\begin{equation} \label{eq:edoz} 
 \phi(y) z'(y)=(1-z(y)^2)(\phi'(y) z(y)+1) . 
\end{equation}
Since we are assuming that $\Sigma$ is a spacelike surface and $\|\nablam f\|^2=(\phi^2 f')^2/\phi^2=z^2$, we must ensure $z^2<1$ everywhere. This happens around $0$, as $z(0)=0$.
By the hypotheses on $\phi$, we have   $\phi>0$ and $|\phi'|\leq 1$, so that \eqref{eq:edoz} yields 
\begin{equation} \label{eq:zz}
\mbox{$z'(y)>0$, whenever $|z(y)|<1$.} \end{equation}
Assume by contradiction that there exists a first point $y_+\in ]0,1[$ such that  $z(y_+)=1$ (an analogous contradiction would follow with $y_-\in ]-1,0[$). Integrating in \eqref{eq:edoz}

\begin{equation} \label{eq:edovs} 
\int_{0}^{y_+ (=y(1))}\frac{dy}{\phi(y)}
 = \int_0^{1(=z(y_+))} \frac{dz}{(1-z^2)(\phi'(y(z)) z+1)} 
\geq \frac{1}{2} \int_0^{1} \frac{dz}{1-z^2}=\infty,
\end{equation}
(recall $|\phi'|\leq 1$) which is a contradiction, because $\phi|_{[0,y_+]}$  is bounded away from 0. 

Notice that the integral expression  permits to obtain $y\mapsto z(y)$ and, then $f(y)$, thus defining the soliton on the whole interval $]-1,1[$.
What is more, the condition i) on $\phi$ implies that, replacing the limits of integration $0, y_+$ by, respectively, $1/2, 1$
(and analogously in the direction towards $-1$):  
$$\lim_{y\to -1} z(y)=-1, \quad \lim_{y\to 1} z(y)=1.$$ Summarizing, one has

\begin{proposition}
    \label{claim:z}
    On $(M,g_M)$, the graph soliton  $F(x,y)=f(y)$ with $f(0)=f'(0)=0$ is determined by $z:=\phi \cdot f':(-1,1) \rightarrow (-1,1)$. This function  is strictly increasing with limits
    $$\lim_{y\to -1^+} z(y)=-1, \quad \lim_{y\to 1^-} z(y)=1.$$
\end{proposition}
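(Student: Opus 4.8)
The plan is to assemble into the four assertions of the proposition the facts already produced in the preceding discussion: that $z$ determines the soliton, that $z$ maps $]-1,1[$ into itself, that $z$ is strictly increasing, and that $z$ attains the boundary values $\pm1$ in the limit. First I would record the \emph{determination} statement. Since $\phi>0$, the relation $z=\phi f'$ gives $f'=z/\phi$, so integrating against $f(0)=0$ recovers $f$ (hence $\Sigma$) uniquely, and $f'(0)=0$ is equivalent to $z(0)=0$ because $\phi(0)>0$. Rewriting \eqref{eq:edoz} as $z'=(1-z^2)(\phi'z+1)/\phi$, whose right-hand side is smooth on $\{|z|<1\}$ thanks to $\phi>0$, standard ODE existence and uniqueness theory yields a unique maximal solution through $z(0)=0$.

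Next I would settle strict monotonicity and, more importantly, global containment of $z$ in $]-1,1[$. Monotonicity is immediate: as long as $|z|<1$, the hypotheses $\phi>0$ and $|\phi'|\le1$ give $\phi'z+1\ge 1-|z|>0$, so \eqref{eq:edoz} forces $z'>0$, which is exactly \eqref{eq:zz}. The substantive point is that the solution never reaches $\pm1$, and here I would run the contradiction argument behind \eqref{eq:edovs}: if there were a first $y_+\in\,]0,1[$ with $z(y_+)=1$, strict monotonicity lets me use $z$ as integration variable in the separated form $dy/\phi=dz/\big((1-z^2)(\phi'z+1)\big)$, whence
$$\int_0^{y_+}\frac{dy}{\phi(y)}=\int_0^1\frac{dz}{(1-z^2)(\phi'(y(z))z+1)}\ge \frac{1}{2}\int_0^1\frac{dz}{1-z^2}=+\infty,$$
while the left-hand side is finite since $\phi$ is bounded away from $0$ on the compact interval $[0,y_+]$. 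The symmetric argument excludes a first $y_-\in\,]-1,0[$ with $z(y_-)=-1$, so $|z|<1$ on the whole maximal domain; the right-hand side of the ODE then stays bounded on compact subintervals, precluding blow-up, so the solution extends to all of $]-1,1[$ and $z\colon\,]-1,1[\,\to\,]-1,1[$ as claimed.

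Finally, for the boundary limits I would use that $z$ is increasing and bounded, so $L:=\lim_{y\to1^-}z(y)\le1$ exists and only $L<1$ must be excluded. On $\,]1/2,1[\,$ condition i) gives $\phi(y)=1-y$, hence $\int_{1/2}^{1}dy/\phi(y)=+\infty$; the same change of variables between $1/2$ and $y\to1^-$ forces the corresponding $z$-integral to diverge. But if $L<1$ the integrand $1/\big((1-z^2)(\phi'z+1)\big)$ stays bounded on $[z(1/2),L]$, because $1-z^2\ge(1-L)(1+z(1/2))>0$ and $\phi'z+1=1-z\ge1-L>0$ there, yielding a finite integral and a contradiction; thus $L=1$. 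The mirror computation near $y=-1$, where $\phi(y)=1+y$ and $\phi'=+1$, gives $\lim_{y\to-1^+}z(y)=-1$. I expect this last step to be the only delicate one: guaranteeing that the increasing, bounded $z$ actually attains the endpoint value $\pm1$ rather than stalling strictly below it, which is precisely what the divergence of $\int dy/\phi$ at the endpoints enforces.
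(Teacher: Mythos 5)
Your proposal is correct and follows essentially the same route as the paper: the separated ODE \eqref{eq:edoz}, monotonicity from $\phi>0$ and $|\phi'|\le 1$, the first-crossing contradiction via the divergent integral \eqref{eq:edovs}, and the endpoint limits forced by $\int dy/\phi=\infty$ near $y=\pm 1$. You merely make explicit two points the paper leaves terse — the non-blow-up/extension of the maximal solution to all of $]-1,1[$ and the monotone-limit argument excluding $L<1$ — both of which are sound.
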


\subsection{Expression of the intrinsic metric of the soliton and completeness} 
As $\Sigma$ was the graph of $F(x,y)=f(y)$, the intrinsic metric $g$ on $\Sigma$ is isometric to $F^*\gt$ on $M$ and we can put
$$
g=g_M-dF^2=\frac{1}{\phi(y)^2}\left(dx^2+(1-z^2(y))dy^2\right), \qquad \forall (x,y)\in M.
$$
From Lemma \ref{claim:z}, $z$ can replace $y$ as a coordinate on $M$, and we can introduce a new one $w=w(y(z))$ as follows:
$$
dw= \frac{\sqrt{1-z^2(y)}}{\phi(y)}dy=
\frac{dz}{\sqrt{(1-z^2}\; (\phi'(y(z)) z+1)},
$$
the latter taking into account \eqref{eq:edovs}. 
Thus, taking  $w(z)$ as $\int_0^z dw$ and using
 $\phi' (y(z))=-1$ (resp. $=1$) in an interval   $]z_+, 1[$
(resp. $]-1.z_-[$),  
\begin{eqnarray} 
    w(z) & = & \frac{\sqrt{1+z}}{\sqrt{1-z}}+c_+ \quad z_+<z<1. \label{w1} \\
w(z) & = & -\frac{\sqrt{1-z}}{\sqrt{1+z}}+c_- \quad -1<z<z_- \label{w2}
\end{eqnarray}
where $z_+:=z(y=1/2) \in ]0,1[$ and $z_-:=z(y=-1/2) \in ]-1,0[$.
In particular, 
$\lim_{z\rightarrow {\pm 1}} w(z)= \pm \infty$ and
we can write:
\begin{equation}
    \label{eq:metric}
    g= \frac{1}{\phi(y(z(w)))^2} dx^2+dw^2, \qquad \forall (x,w)\in \R^2.
\end{equation}

\begin{remark} As $H$ will be affinely bounded, the completeness of $g$ will be a consequence of the bound of $H$ obtained below, applying Corollary \ref{co:completeness} (1b). 
However,  completeness can also be checked directly, because $g$ is the warped product $B\times _\psi F$
of two complete Riemannian manifolds (with base 
$(B,g_B)$ and the fiber $(F,g_F)$  equal to $\R$, and warping function $\psi(w)=1/\phi(y(z(w))$) and, therefore, its completeness is well-known
(see for example \cite[Lemma 7.40]{Oneill}). 

\end{remark}

\subsection{Growth of $H$ in terms of $r$.} As $\nu=(f' \, \phi \partial_y-\partial_t)/\sqrt{1-z^2}$, from \eqref{soliton-scalar},
\begin{equation} \label{eq:HH} H=\frac{1}{\sqrt{1-z^2}}.\end{equation}
Fix  $o=(0,0)$, define (as before) $r(x,w):= d(o,(x,w))$ and notice from \eqref{eq:metric}:
\begin{equation}\label{e_rxw}
    r(x,w) \geq r(0,w) = |w|. 
\end{equation}
Using this inequality and, then,  \eqref{w1}, \eqref{w2} as well that $H(x,w)$ is independent of $x$, we have that
\begin{equation}\label{e_rxw2}
    \limsup_{r(x,w) \to +\infty} \frac{H}{r}(x,w)  \leq \limsup_{w \to +\infty} \frac{H(0,w)}{w} =\frac 12.
\end{equation}
In particular, it is possible to find  constants $A,B >0$ so that:
$$H(p) \leq A \; r(p)+B \quad \forall p \in \Sigma,$$
that is, the growth is at most linear with 
 $G(r) =  A \, r+B$. 
 
 Remarkably,  $A\geq  1/2$ must hold, 
   because the equality in \eqref{e_rxw} (and, then, in \eqref{e_rxw2}) is attained when $x=0$. 
 \subsection{Growth of $H$ in terms of $r_M$.} 

 Let us compute $H$ in terms of $r_M$ close to $y=1$. We have, from \eqref{eq:edovs}, that
 $$\frac{dy}{1-y}=\frac{dz}{(1-z^2)(1-z)}\geq C_1 \frac{dz}{(1-z)^2}, \quad C_1>0.$$
 Then, {there exists $C_2\in \R$ such that} 
 $$-\ln(1-y) \geq \frac{C_1}{1-z}-C_2 . 
 $$
 On the other hand, taking $o_M=(0,0)$, then $r_M(0,y)\equiv r_M(y)=-\ln (1-y).$ Thus
 \begin{equation} 
     z \leq 1-\frac{C_1}{r_M+C_2}
 \end{equation}
{focusing in points with $r_M(y)$ large}. Using the above inequality in \eqref{eq:HH}, 
 $$H(0,y) \leq C_3 \sqrt{r_M(0,y)+C_2}, 
 \qquad  \mbox{for some $C_3>0$.}$$
{Choosing suitable constants, $H$ could also be bounded analogously from below.  That is, the logarithmic $g_M$-bound required to ensure the Omori-Yau maximum principle (stressed in Remark \ref{r_GM}) cannot be obtained, underlining the sharpness of the estimates in terms of $r$}. 
 
 In conclusion, the case of unbounded $H$ with controlled growth studied here seems an especially relevant issue for the development of the theory of solitons in the Lorentzian setting.
 


\begin{thebibliography}{999}


\bibitem{AG} Abresch, U.. Gromoll, D. {\em On Complete Manifolds With Nonnegative Ricci Curvature}. J. Amer. Math. Soc. Vol. 3, No. 2 (1990), 355-374.  

\bibitem{BakryEmery} Bakry, D., Émery, E. Diffusions hypercontractives. In: Séminaire de probabilités, XIX, 1983/84.
Lecture Notes in Mathematics, vol. 1123, pp. 177–206. Springer, Berlin (1985).

\bibitem{AC} Alías, L. J.; Colares, A. G., {\em Uniqueness of spacelike hypersurfaces with constant higher order mean curvature in generalized Robertson–Walker spacetimes} Math. Proc. Cambridge Philos. Soc. 143 (3), 703-729


\bibitem{AMR} Alías, L. J.; Mastrolia, P.; Rigoli, M. \emph{Maximum Principles and Geometric Applications }. Springer Monographs in Mathematics. Springer (2016).

\bibitem{B1} Bartnik, R., {\em Existence of maximal surfaces in asymptotically fiat spacetimes.}  Commun. Math. Phys. 94, 155-175 (1984).


\bibitem{BL} Batista, M.; Lima, H. F. \emph{ Spacelike translating solitons in Lorentzian product spaces: Nonexistence, Calabi-Bernstein type results and examples}. Communications in Contemporary Mathematics, 2022, {\bf 24}(0219-1997) doi: 10.1142/S0219199721500346.

\bibitem{BE}  Beem, J.K. and   Ehrlich, P., {\em Geodesic Completeness of Submanifolds in Minkowski Space}, Geometriae
Dedicata, 18 (1985) 213-225.

\bibitem{CQ} Chen, Q. and Qiu, H. B., {\em Rigidity of self-shrinkers and translating solitons of mean curvature flows}, Adv. Math., 294, 2016, 517–531.

\bibitem{CY} Cheng, S.-Y. and Yau, S.-T., {\em Maximal Space-Like Hypersurfaces in the Lorentz Minkowski
Spaces}. Ann. Math. 104 (1976), 407-419.


\bibitem{CB} Choquet-Bruhat, Y., {\em  Maximal submanifolds and submanifolds with constant mean extrinsic curvature of a Lorentzian manifold.} Ann. Sci. Norm. Sup. Pisa, Str. IV. 3, 361-376 (1976).

\bibitem{Di} Ding, Q., {\em Entire spacelike translating solitons in Minkowski space}. J. Funct. Anal. Volume 265, Issue 12  (2013)  3133-3162. 

\bibitem{EH} Ecker, K., Huisken, G. Parabolic methods for the construction of spacelike slices of prescribed mean curvature in cosmological spacetimes. Commun. Math. Phys. 135, 595–613 (1991). \\ https://doi.org/10.1007/BF02104123


\bibitem{EFSZ}  Elshafei, A.,  Ferreira, AC.,  S\'anchez,  M. and  Zeghib, A., 
{\em Lie groups with all left-invariant semi-Riemannian metrics complete}. 
 Trans. Amer. Math. Soc. 377 (2024),  5837-5862.

\bibitem{Ha}  Harris, S.G., {\em Closed and complete spacelike hypersurfaces in
Minkowski space}.  Class. Quantum Grav. 5  (1988) 111-119.

\bibitem{FM} Fischer, A., Marsden, J., {\em  Topics in the dynamics of General Relativity,} in Isolated
Gravitating systems in General Relativity (Italian Phys. Soc. 1979), 322-395.

\bibitem{HIMW-1} Hoffman, D.; Ilmanen, T.; Martin, F. and White B. {\em Graphical Translators for Mean Curvature Flow}. Calculus of Variations and PDE’s, vol. 58, (2019), art. 117.

\bibitem{HIMW-2} Hoffman, D.; Ilmanen, T.; Martin, F. and White B. {\em Notes on Translating Solitons of the Mean Curvature Flow}. Minimal surfaces: integrable systems and visualisation, 147–168, Springer Proc. Math. Stat., 349, Springer, Cham, [2021], \copyright 2021.

\bibitem{LO} Lawn, MA., Ortega, M. {\em Translating Solitons in a Lorentzian Setting, Submersions and Cohomogeneity One Actions.} Mediterr. J. Math. 19, 102 (2022). 

\bibitem{LM} Lira, J. H. and Martin, F. {\em Translating solitons in Riemannian products}. Journal of Differential Equations Volume 266, Issue 12, 5  (2019)  7780-7812.

\bibitem{Li} Li, X.D. Liouville theorems for symmetric diffusion operators on complete Riemannian manifolds. J.
Math. Pures Appl. 84 (2005) 1295–1361.

\bibitem{Lott} Lott, J.: Some Geometric Properties of the Bakry-Émery-Ricci Tensor. Comment. Math. Helv. 78
(2003) 865–883.

\bibitem{Oneill} O'Neill, B.; \emph{Semi-Riemannian geometry.
With applications to relativity}
Pure Appl. Math., 103
Academic Press, Inc. [Harcourt Brace Jovanovich, Publishers], New York, 1983. xiii+468 pp.
ISBN:0-12-526740-1.

\bibitem{petersen} Petersen, P.;  \emph{Riemannian Geometry.} Graduate Texts in Mathematics, vol. 171, 2nd edn.
(Springer, New York, 2006)

\bibitem{SaFields}  Sánchez, M.
{\em On the completeness of trajectories for some mechanical systems}. In:
 Geometry, Mechanics, and Dynamics: The Legacy of Jerry Marsden. Editors: D.D. Holm, D.E. Chang, T. Ratiu and G. Patrick, Fields Institute Communications, Volume 73, Springer (2015) pp. 343-372.

 \bibitem{Tr} Treibergs, A.E., {\em Entire spacelike hypersurfaces of constant mean curvature in Minkowski space}. Invent. Math. 66 (1982) 39–56.

































%
%
%

\end{thebibliography}
\end{document}